\newtheorem{theorem}{Theorem}[section]
\newtheorem{lemma}[theorem]{Lemma}
\newtheorem{corollary}[theorem]{Corollary}
\newtheorem{lem}[theorem]{Lemma}
\theoremstyle{remark}
\newcommand{\scr}[0]{\mathscr}
\newcommand{\bb}[0]{\mathbb}
\newcommand{\mc}[1]{\mathcal{#1}}
\newcommand{\fn}[2]{#1\left(#2\right)}
\newcommand{\set}[1]{\left\{#1\right\}}
\newcommand{\floor}[1]{\left\lfloor#1\right\rfloor}
\newcommand{\clInt}[2]{\left[#1,#2\right]}
\newcommand{\smallo}[1]{o\left(#1\right)}
\author{Ndiam\'e Ndiaye\thanks{Department of Mathematics \& Statistics, McGill University: ndiame.ndiaye@mail.mcgill.ca}\\
\and Sergey Norin\thanks{Department of Mathematics \& Statistics, McGill University: sergey.norin@mcgill.ca}\\
\and Adrian Vetta\thanks{School of Computer Science and Dept. of Mathematics \& Statistics, McGill University: adrian.vetta@mcgill.ca}}
\title{Descending the Stable Matching Lattice: How many\\ Strategic Agents are required to turn Pessimality to Optimality?}
\date{}
\begin{document}

\pagestyle{plain}

\maketitle

\vspace{-.7cm}

\begin{abstract}
The set of stable matchings induces a distributive lattice. The supremum of the stable matching lattice is the 
boy-optimal (girl-pessimal) stable matching and the infimum is the girl-optimal (boy-pessimal) stable matching.
The classical boy-proposal deferred-acceptance algorithm returns the supremum of the lattice, that is, the boy-optimal stable matching.
In this paper, we study the smallest group of girls, called the {\em minimum winning coalition of girls},
that can act strategically, but independently, to force the boy-proposal deferred-acceptance algorithm to
output the girl-optimal stable matching.
We characterize the minimum winning coalition in terms of stable matching rotations and show that its cardinality 
can take on any value between $0$ and $\floor{\frac{n}{2}}$, for instances with $n$ boys and $n$ girls. 
Our two main results concern the random matching model. First, the expected cardinality of the minimum winning coalition
is small, specifically $(\frac{1}{2}+o(1))\log{n}$. This resolves a conjecture of Kupfer~\cite{Kup18}. 
Second, in contrast, a randomly selected coalition must contain nearly every girl to ensure it is a winning coalition asymptotically almost surely.
Equivalently, for any $\varepsilon>0$, the probability a random group of $(1-\varepsilon)n$ girls is {\em not} a 
winning coalition is at least $\delta(\varepsilon)>0$.
\end{abstract}

\section{Introduction}
We study the stable matching problem with $n$ boys and $n$ girls. Each boy has a preference ranking over the girls and
vice versa. A matching is {\em stable} if there is no boy-girl pair that prefer each other over their current partners in the matching.
A stable matching always exists and can be found by the deferred-acceptance algorithm \cite{GS62}.
Furthermore, the set of stable matchings forms a lattice whose supremum matches each boy to his {\em best} stable-partner
and each girl to her {\em worst} stable-partner. This matching is called the {\em boy-optimal} (girl-pessimal) stable matching.
Conversely, the infimum of the lattice matches each boy to his worst stable-partner and each girl to her best stable-partner. 
Consequently this matching is called the {\em girl-optimal} (boy-pessimal) stable matching. 

Interestingly, the deferred-acceptance algorithm outputs the optimal stable matching for the proposing side. Perhaps surprisingly, 
the choice of which side makes the proposal can make a significant difference.
For example, for the random matching model, where the preference list of each boy and girl is sampled uniformly and independently,
Pittel~\cite{Pit89} showed the boy-proposal deferred acceptance algorithm assigns the boys with much better ranking partners than the girls.
Specifically, with high probability, the sum of the partner ranks is close to $n\log n$ for the boys and close to $\frac{n^2}{\log n}$ for the girls. 
Hence, on average, each boy ranks his partner at position $\log n$ at the boy-optimal stable matching while
each girl only ranks her partner at position $\frac{n}{\log n}$.
Consequently, collectively the girls may have a much higher preference for the infimum (girl-optimal) stable matching than the supremum
(girl-pessimal) stable matching output by the boy-proposal deferred-acceptance algorithm.

Remarkably, Ashlagi et al.~\cite{AKL17} proved that in an {\em unbalanced market} with one fewer girls than boys
this advantage to the boys is reversed. In the random matching model, with high probability, each girl is matched to a 
boy she ranks at $\log n$ on average and each boy is matched to a girl he ranks at $\frac{n}{\log n}$ on average, even using the
boy-proposal deferred-acceptance algorithm.\footnote{In fact,
an unbalanced market essentially contains a unique stable matching; see~\cite{AKL17} for details.}
Kupfer~\cite{Kup18} then showed a similar effect arises in a balanced market in which exactly one
girl acts strategically. The expected rank of the partner of each girl improves to $O(\log^4 n)$ while the
expected rank of the partner of each boy deteriorates to $\Omega(\frac{n}{\log^{2+\epsilon} n})$. 
Thus, just one strategic girl suffices for the stable matching output by the boy-proposal deferred-acceptance algorithm
to change from the supremum of the lattice to a stable matching ``close'' to the infimum. But how many strategic girls are required to
guarantee the infimum itself is output? Kupfer~\cite{Kup18} conjectured that $O(\log n)$ girls suffice in expectation.
In this paper we prove this conjecture. More precisely, we show that the minimum number of strategic girls required 
is $\frac12\log n +O(\log\log n) = (\frac12 +o(1))\log n$ in expectation. 
Consequently, the expected cardinality of the optimal winning coalition of girls is relatively small.

Conversely, a random coalition of girls must be extremely large, namely of cardinality $n-o(n)$, if it is to be
a winning coalition with high probability. We prove that, for any $\varepsilon>0$, the probability a random group of $(1-\varepsilon)n$ girls is {\em not} a 
winning coalition is at least a constant.

\subsection{Overview}

In Section~\ref{sec:background}, we present the relevant background on the stable matching problem, in particular, concerning 
the stable matching lattice and the rotation poset. In Section~\ref{sec:incentives} we provide a characterization of
winning coalitions of girls in terms of minimal rotations in the rotation poset.
This allows us to show that for general stable matching instances the cardinality of the minimum winning coalition may 
take on every integral value between a lower bound of $0$ and an upper bound of $\floor{\frac{n}{2}}$.
In Section~\ref{sec:ex}, we present an example to 
illustrate the relevant stable matching concepts and ideas used in the paper. 
In Section~\ref{sec:random-model}, we present the random matching model studied for the main 
results of the paper. Our first main result is given in Section~\ref{sec:minimum-coalitions} 
and shows that in random instances 
the cardinality of the minimum winning coalition is much closer to the lower bound than the upper bound.
Specifically, in the random matching model, the expected cardinality of the minimum winning coalition
is  $\frac12\log n +O(\log\log n)$. 
Our second main result is presented in Section~\ref{sec:random-coalitions} 
and shows that for a randomly selected coalition to be a winning coalition with probability $1-o(1)$, 
it must have cardinality $n-o(n)$.

\section{The Stable Matching Problem}\label{sec:background}

Here we review the stable matching problem and the concepts and results relevant to this paper.
The reader is referred to the book~\cite{GI89} by Gusfield and Irving for a comprehensive 
introduction to stable matchings. 
An illustrative example, defined in Table~\ref{tab:preferences}, will also be presented in Section~\ref{sec:ex}.
We remark that this example is deferred until all the relevant concepts have been 
defined (indeed, it will be clearer to present the structural properties of the example in a different order 
than how they are defined in this review).

We are given a set $B=\{b_1,b_2,\dots, b_n\}$ of boys and a set $G=\{g_1,g_2,\dots, g_n\}$ of girls. 
Every boy $b\in B$ has a preference ranking $\succ_b$ over the girls; similarly, every girl $g\in G$ 
has a preference ranking~$\succ_g$ over the boys. 
Now let $\mu$ be a (perfect) matching between the boys and girls. 
We say that boy $b$ is matched to girl $\mu(b)$ in the matching $\mu$; similarly, girl $g$ is matched to boy $\mu(g)$.
Boy~$b$ and girl~$g$ form a {\em blocking pair} $\{b,g\}$ if they prefer each other to their partners
in the matching~$\mu$; that is $g\succ_b \mu(b)$ {\bf and} $b\succ_g \mu(g)$.\
A matching $\mu$ that contains no blocking pair is called {\em stable}; otherwise it is unstable.
In the {\em stable matching problem}, the task is to find a stable matching.

\subsection{The Deferred-Acceptance Algorithm}\label{sec:DFA}
The first question to answer is whether or not a stable matching is guaranteed to exist.
Indeed a stable matching always exists, as shown in the seminal work of Gale and Shapley~\citep{GS62}. 
Their proof was constructive; the {\em deferred-acceptance algorithm}, described in Algorithm~\ref{alg:DFA},
outputs a stable matching.

\

\begin{algorithm}
\While{there is an unmatched boy $b$}{
Let $b$ propose to his favourite girl $g$ who has not yet rejected him\;
  \uIf{$g$ is unmatched}{
    $g$ provisionally matches with $b$\;
  }
  \uElseIf{$g$ is provisionally matched to $\hat{b}$}{
    $g$ provisionally matches to her favourite of $b$ and $\hat{b}$, and rejects the other\;
  }
  }
\caption{Deferred-Acceptance (Boy-Proposal Version)}\label{alg:DFA}
\end{algorithm}

\

The key observation here is that only a girl can reject a provisional match. Thus, from a girl's perspective, her provisional match 
can only improve as the algorithm runs. It follows that the deferred-acceptance algorithm terminates when every girl has received 
at least one proposal. In addition, from a boy's perspective, his provisional match can only get worse as the algorithm runs.
Indeed, it would be pointless for a boy to propose to girl who has already rejected him.
Thus, each boy will make at most $n$ proposals. Furthermore, because each boy makes proposals in decreasing order of preference,
every girl must eventually receive a proposal. Thus the deferred-acceptance algorithm must terminate with a perfect matching $\mu$.
At this point all provisional matches are made permanent.
But why will this permanent set of matches $\mu$ form a stable matching? 
The proof is simple and informative, so we include it for completeness.

\begin{theorem}[Gale and Shapley 1962 ~\citep{GS62}]
The deferred-acceptance algorithm outputs a stable matching.
\end{theorem}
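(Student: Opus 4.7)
The plan is to prove stability by contradiction, exploiting the two monotonicity properties of the algorithm that were already highlighted in the preceding discussion: from a girl's perspective her provisional partner only improves, and from a boy's perspective his provisional partner only worsens. Since the text just above the theorem already argues that the algorithm terminates with a perfect matching $\mu$, the only thing left to verify is the absence of blocking pairs.

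So suppose for contradiction that $\{b,g\}$ is a blocking pair in the output matching $\mu$, and set $g' = \mu(b)$ and $b' = \mu(g)$, so that $g \succ_b g'$ and $b \succ_g b'$. The first step is to observe that because $b$ makes his proposals in strictly decreasing order of preference and ends up provisionally matched to $g'$, he must have proposed to every girl he prefers to $g'$ at some earlier point in the execution. In particular, $b$ must have proposed to $g$ at some iteration.

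The second step is to track what $g$ did with that proposal. Since $g$ is not matched to $b$ at termination, she must have rejected $b$, either immediately upon receipt of a strictly preferred provisional partner or later when such a partner proposed. By the monotonicity observation for girls, $g$'s provisional (and hence final) partner can only improve after that rejection, so $b' = \mu(g) \succeq_g b$, with the weak inequality actually strict whenever $b' \neq b$. This directly contradicts the assumption $b \succ_g b'$, completing the proof.

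The only real subtlety, and the step I would be most careful about when writing the formal argument, is the first one: justifying that $b$ actually proposed to $g$ at some point. This requires invoking the fact that the algorithm only terminates when $b$ is matched, together with the rule that an unmatched boy always proposes to his favourite girl who has not yet rejected him; so if $b$ had never proposed to $g$ and $g \succ_b g'$, then $b$ would have proposed to $g$ rather than to $g'$ (or earlier than the proposal eventually accepted by $g'$). Once this is pinned down cleanly, the rest is just applying the girl-side monotonicity observation.
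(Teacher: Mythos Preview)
Your proposal is correct and follows essentially the same argument as the paper: assume a blocking pair $\{b,g\}$, observe that $b$ must have proposed to $g$ before reaching $\mu(b)$, note that $g$ rejected $b$ and that her provisional partner only improves, and conclude $\mu(g)\succ_g b$, a contradiction. The only cosmetic differences are notation ($g',b'$ versus $\hat g,\hat b$) and your extra paragraph spelling out why $b$ necessarily proposed to $g$, which the paper leaves implicit.
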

\begin{proof}
Suppose $\{b, g\}$ is a blocking pair for $\mu$. Then boy $b$ prefers girl $g$ over girl $\hat{g}=\mu(b)$, that is $g \succ_{b} \mu(b)$. 
So $b$ must have proposed
to $g$ before proposing to $\hat{g}$. Then $g$ must have rejected $b$. Either she rejected $b$ at the time of the proposal or 
she provisionally accepted his offer but later
rejected him after receiving a better offer. As her provisional partner only improves over time, it follows that girl $g$ prefers 
her final permanent partner $\hat{b}=\mu(g)$ over $b$.
That is, $\mu(g) \succ_{g} b$, and so $\{b, g\}$ is not a blocking pair.
\end{proof}

\subsection{The Stable Matching Lattice}\label{sec:SM-lattice}
So a stable matching always exists.
In fact, there may be an exponential number of stable matchings~\cite{Knu82}; see Theorem~\ref{thm:all-integers} for an example.
The set $\mathcal{M}$ of all stable matchings forms a poset $(\mathcal{M}, \geqslant)$ whose order $\geqslant$ is defined via the
preference lists of the boys. Specifically, $\mu_1 \geqslant \mu_2$ if and only if every boy weakly prefers their partner
in the stable matching $\mu_1$ to their partner in the stable matching $\mu_2$; that is $\mu_1(b) \succeq_b \mu_2(b)$, for every boy~$b$.

Conway (see Knuth~\cite{Knu82}) observed that the poset $(\mathcal{M}, \geqslant)$ is in fact a {\em distributive lattice}.
Thus,  by the lattice property, each pair of stable matchings $\mu_1$ and $\mu_2$ has a {\em join} (least upper bound) and a {\em meet} 
(greatest lower bound) in the lattice. Moreover, the join $\hat{\mu} = \mu_1\vee \mu_2$ 
has the remarkable property that each boy $b$ is matched to his {\em most preferred} partner amongst
the girls $\mu_1(b)$ and $\mu_2(b)$. 
Similarly, in the meet $\check{\mu}= \mu_1\wedge \mu_2$ each boy is matched to his {\em least preferred} partner 
amongst the girls $\mu_1(b)$ and $\mu_2(b)$. 
In particular, in the {\em supremum} ${\bf 1}=\bigvee_{\mu\in \mathcal{M}} \mu$ of the lattice
each boy is matched to his most preferred partner from any stable matching (called his {\em best stable-partner}). 
Accordingly, the matching ${\bf 1}$ is called the {\em boy-optimal} stable matching.
On the other hand, in the {\em infimum} ${\bf 0}=\bigwedge_{\mu\in \mathcal{M}} \mu$ of the lattice
each boy is matched to his least preferred partner from any stable matching (called his {\em worst stable-partner}).
Accordingly, the matching ${\bf 0}$ is called the {\em boy-pessimal} stable matching.

\begin{theorem}~\citep{GS62}\label{thm:boy-optimal} 
The deferred-acceptance algorithm outputs the boy-optimal stable matching.
\end{theorem}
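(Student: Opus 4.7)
The plan is to prove that no boy is ever rejected by a \emph{feasible} partner during the execution of the algorithm, where a girl $g$ is called feasible for boy $b$ if there exists some stable matching $\mu^*$ with $\mu^*(b) = g$. Once this is established, the result follows immediately: since boys propose in strictly decreasing order of preference, each boy ends up matched to his most-preferred girl among those who never rejected him, and this set contains every feasible partner; so his final partner is at least as preferred as every feasible partner, which is precisely the definition of his best stable-partner.

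To establish the key claim, I would argue by contradiction. Consider the first time during the run of Algorithm~\ref{alg:DFA} that some boy is rejected by a feasible partner, and let $\{b,g\}$ be the pair involved at that moment, with $g$ feasible for $b$. The rejection happens because $g$ received (or already held) a proposal from another boy $\hat b$ whom she strictly prefers to $b$, i.e.\ $\hat b \succ_g b$. Since $g$ is feasible for $b$, fix a stable matching $\mu^*$ with $\mu^*(b)=g$, and let $\hat g = \mu^*(\hat b)$.

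Now I exploit the minimality of this rejection event. Because $\hat b$ is currently proposing to (or matched with) $g$, and boys propose in order of decreasing preference, $\hat b$ has not yet proposed to any girl he prefers less than $g$; in particular, either $\hat b$ has not yet proposed to $\hat g$, or he has already proposed to $\hat g$ and been rejected by her. The latter cannot occur, because $\hat g = \mu^*(\hat b)$ is a feasible partner of $\hat b$, and such a rejection would have preceded the current event, contradicting its being the first. Hence $\hat b$ strictly prefers $g$ to $\hat g$, i.e.\ $g \succ_{\hat b} \mu^*(\hat b)$, while simultaneously $\hat b \succ_g b = \mu^*(g)$. Thus $\{\hat b, g\}$ is a blocking pair for $\mu^*$, contradicting the stability of $\mu^*$.

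The only subtlety, and the step I expect to require the most care, is the bookkeeping that rules out the possibility that $\hat b$ has already been rejected by $\hat g$. This is exactly where the minimality of the rejection event is used, and it is the crux of the argument; the rest reduces to the definition of a blocking pair and the monotonicity observations about proposals and provisional matches already recorded in Section~\ref{sec:DFA}.
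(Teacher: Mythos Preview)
Your proof is correct and follows essentially the same approach as the paper: both consider the first rejection of a boy by a stable (feasible) partner, use minimality to argue the rival $\hat b$ has not yet been rejected by any stable partner and hence prefers $g$ to $\mu^*(\hat b)$, and derive that $\{\hat b,g\}$ blocks $\mu^*$. Your write-up is slightly more explicit about the case analysis on whether $\hat b$ has proposed to $\hat g$, but the underlying argument is the same.
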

\begin{proof}
If not, let $b$ be the first boy rejected by a stable partner, say $g$, during the course of the deferred-acceptance algorithm.
Assume there is a stable matching $\hat{\mu}$ in which the pair $(b, g)$ is matched and assume that $g$ rejects $b$ in favour 
of the boy $\hat{b}$. By assumption $b$ was the first boy rejected by a stable partner so boy $\hat{b}$ had not been rejected by any stable-partner when $g$ rejected $b$.
Thus $\hat{b}$ prefers $g$ over any stable partner. In particular, he prefers $g$ over his stable partner
$\hat{\mu}(b)\neq g$. But then  $g \succ_{\hat{b}} \hat{\mu}(\hat{b})$ and $\hat{b}\succ_{g} \hat{\mu}(g)=b$.
Hence, $\{\hat{b} ,g\}$ is a blocking pair for the matching $\hat{\mu}$, a contradiction.
\end{proof}
The reader may have observed that the description of the deferred-acceptance algorithm given in Algorithm~\ref{alg:DFA} is ill-specified.
In particular, which unmatched boy is selected to make the next proposal? 
Theorem~\ref{thm:boy-optimal} explains the laxity of our description. It is irrelevant which unmatched boy is chosen in each step,
the final outcome is guaranteed to be the boy-optimal stable matching! In fact, the original description of the algorithm by 
Gale and Shapley~\citep{GS62}
allowed for simultaneous proposals by unmatched boys -- again this has no effect on the stable matching output.

The inverse poset $(\mathcal{M}, \leqslant)$ is also of fundamental interest. Indeed, McVitie and Wilson~\cite{MW71}
made the surprising observation that $(\mathcal{M}, \leqslant)$ is the lattice defined using the
preference lists of the girls rather than the boys. That is, every boy weakly prefers their partner
in the stable matching $\mu_1$ to their partner in the stable matching $\mu_2$ if and only if 
every girl weakly prefers their partner in the stable matching $\mu_2$ to their partner in the stable matching $\mu_1$.

\begin{theorem}~\citep{MW71}\label{thm:inverse} 
If $\mu_1\geqslant \mu_2$ in the lattice $(\mathcal{M}, \geqslant)$ then every girl weakly prefers $\mu_2$ over $\mu_1$.
\end{theorem}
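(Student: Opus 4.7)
The plan is to argue by contradiction via a blocking-pair argument on $\mu_2$, exploiting the fact that preferences are strict on both sides. Suppose $\mu_1 \geqslant \mu_2$ so that every boy weakly prefers $\mu_1$ over $\mu_2$, and suppose for contradiction that some girl $g$ strictly prefers $\mu_1$ over $\mu_2$, i.e., $\mu_1(g) \succ_g \mu_2(g)$.

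The key step is to focus on the partner of $g$ in the matching she strictly prefers. Set $b = \mu_1(g)$, so that $\mu_1(b) = g$. By the hypothesis applied to $b$, we have $g = \mu_1(b) \succeq_b \mu_2(b)$. I would then split into two cases. If $\mu_2(b) = g$, then $\mu_2(g) = b = \mu_1(g)$, directly contradicting the strict preference $\mu_1(g) \succ_g \mu_2(g)$. Otherwise $\mu_2(b) \neq g$, and since boys' preferences are strict the weak preference strengthens to $g \succ_b \mu_2(b)$. Combined with $b = \mu_1(g) \succ_g \mu_2(g)$, this shows that $\{b, g\}$ is a blocking pair for $\mu_2$, contradicting the stability of $\mu_2$.

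I do not expect any real obstacle: the whole argument is essentially a one-line consequence of the definitions of ``blocking pair'' and the weak preference order $\geqslant$, together with the strictness of preferences. The only subtlety is the degenerate case $\mu_2(b) = g$, in which the would-be blocking pair does not block because both agents are already matched to each other; handling this case separately by noting that it contradicts the assumed strict preference of $g$ closes the gap cleanly.
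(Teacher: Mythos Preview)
Your proof is correct and follows essentially the same blocking-pair contradiction as the paper: assume some girl $g$ strictly prefers $b=\mu_1(g)$ to $\mu_2(g)$, observe that $b$ (weakly) prefers $g=\mu_1(b)$ to $\mu_2(b)$, and conclude that $\{b,g\}$ blocks $\mu_2$. The only difference is that you explicitly dispose of the degenerate case $\mu_2(b)=g$, which the paper leaves implicit (it cannot occur, since it would force $\mu_1(g)=\mu_2(g)$).
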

\begin{proof}
Assume there is a girl $g$ who prefers boy $b=\mu_1(g)$ over boy $\mu_2(g)$. But, by assumption,
boy $b$ prefers $g=\mu_1(b)$ over girl $\mu_2(b)$. Thus $\{b,g\}$ is a 
blocking pair for the matching $\mu_2$, a contradiction.
\end{proof}
Consequently, the boy-optimal stable matching ${\bf 1}$ is also the {\em girl-pessimal} stable matching
and the boy-pessimal stable matching ${\bf 0}$ is the {\em girl-optimal} stable matching. 

For our example, the set of stable matchings and the stable matching lattice are shown in Table~\ref{tab:23-stable-matchings}
and Figure~\ref{fig:sm-lattice} of Section~\ref{sec:ex}, respectively.

 \subsection{The Rotation Poset}\label{sec:rotation-poset}
 
 Recall that the lattice $(\mathcal{M}, \geqslant)$ is a {\em distributive} lattice. 
 This is important because the {\em fundamental theorem for finite distributive
 lattices} of Birkhoff~\cite{Birk37} states that associated with any distributive lattice $\mathcal{L}$ is a unique
 {\em auxiliary poset} $\mathcal{P}(\mathcal{L})$. Specifically, the order ideals (or down-sets) of the auxiliary poset $\mathcal{P}$,
 ordered by inclusion, form the lattice $\mathcal{L}$. We refer the reader to the book of Stanley~\cite{Stan97} for details on the 
 fundamental theorem for finite distributive lattices. For our purposes, however, it is sufficient to note that the
 auxiliary poset $\mathcal{P}$ for the stable matching lattice $(\mathcal{M}, \geqslant)$ has an elegant
 combinatorial description that is very amenable in studying stable matchings.
 
 In particular, the auxiliary poset for the stable matching lattice is called the {\em rotation poset} $\mathcal{P}=(\mathcal{R}, \geq)$ 
 and was first discovered by Irving and Leather~\cite{IL86}. The elements of the auxiliary poset are {\em rotations}.
 Informally, given a stable matching $\mu$, a rotation will rearrange the partners of a suitably chosen subset of the boys in a circular fashion
 to produce another stable matching. 
Formally, a rotation $R\in \mathcal{R}$ is a subset of the pairs in the stable matching $\mu$,
$R=[(b_0,g_0), (b_1,g_1),\dots, (b_k, g_k)]$, such that
for each boy $b_i$, the girl $g_{i+1\, (\text{mod}\, k+1)}$ is the first girl {\em after} 
his current stable-partner $g_i$ on his preference list who would accept a proposal from him.
That is, $g_{i+1}$ prefers boy $b_i$ over her current partner boy $b_{i+1}$ {\em and} every girl $g$
that boy $b_i$ ranks on his list between $g_{i}$ and $g_{i+1}$ prefers her current partner in $\mu$ over $b_i$.

In this case, we say that $R$ is a {\em rotation exposed} by the stable matching $\mu$. 
Let $\hat{\mu}=\mu \otimes R$ be the perfect matching obtained by matching boy $b_i$ with 
the girl $g_{i+1\, (\text{mod}\, k+1)}$, for each $0\le i\le k$, with all other matches the same as in $\mu$.
Irving and Leather~\cite{IL86} showed that $\hat{\mu}$ is also a stable matching. More importantly they proved:
\begin{theorem}~\cite{IL86}\label{thm:covered} 
The matching $\hat{\mu}$ is covered\footnote{We say $y$ is {\em covered} by $x$ in a poset if $x \geqslant y$ and
there is no element $z$ such that $x \geqslant z \geqslant y$.} 
by $\mu$ in the Hasse diagram of the stable matching lattice
if and only if $\hat{\mu}=\mu \otimes R$ for some rotation $R$ exposed by $\mu$. \qed
\end{theorem}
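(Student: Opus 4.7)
The plan is to prove both implications of the biconditional, with nearly all the work concentrated in the ``$\Rightarrow$'' direction.

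For the ``$\Leftarrow$'' direction, assume $\hat\mu = \mu \otimes R$ for an exposed rotation $R = [(b_0,g_0),\dots,(b_k,g_k)]$. Strict dominance $\mu > \hat\mu$ is immediate: by the definition of a rotation, each new partner $g_{i+1}$ lies strictly after $g_i$ on $b_i$'s preference list, while boys outside $R$ keep their partners. To establish the covering property, I will suppose for contradiction that some stable $\mu'$ satisfies $\mu > \mu' > \hat\mu$. Any boy $b \notin R$ has $\mu(b) = \hat\mu(b)$, which forces $\mu'(b) = \mu(b)$. For each $b_i \in R$, the crucial step is to show $\mu'(b_i) \in \{g_i, g_{i+1}\}$: otherwise $\mu'(b_i) = g^*$ sits strictly between them, so by the defining property of the rotation $\mu(g^*) \succ_{g^*} b_i$; but $\mu'(g^*) = b_i$ combined with Theorem~\ref{thm:inverse} applied to $\mu \geq \mu'$ gives $b_i \succeq_{g^*} \mu(g^*)$, a contradiction. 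A short propagation around the cyclic rotation then forces either $\mu'(b_i) = g_i$ for all $i$ or $\mu'(b_i) = g_{i+1}$ for all $i$, so $\mu' \in \{\mu, \hat\mu\}$, contradicting strictness.

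For the ``$\Rightarrow$'' direction, assume $\mu$ covers $\hat\mu$ and set $B' = \{b : \mu(b) \neq \hat\mu(b)\}$, which is nonempty. I will pick an arbitrary $b_0 \in B'$ and build a sequence iteratively: set $g_i = \mu(b_i)$, let $g_{i+1}$ be the first girl strictly after $g_i$ on $b_i$'s preference list satisfying $b_i \succ_{g_{i+1}} \mu(g_{i+1})$, and let $b_{i+1} = \mu(g_{i+1})$. Existence of $g_{i+1}$ is guaranteed because $\hat\mu(b_i)$ itself lies after $g_i$ on $b_i$'s list (since boys weakly prefer $\mu$) and satisfies the acceptance condition thanks to Theorem~\ref{thm:inverse} applied to $\mu \geq \hat\mu$, which yields $b_i = \hat\mu(\hat\mu(b_i)) \succ_{\hat\mu(b_i)} \mu(\hat\mu(b_i))$; therefore $g_{i+1} \succeq_{b_i} \hat\mu(b_i)$. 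Next I will verify $b_{i+1} \in B'$: if instead $\hat\mu(b_{i+1}) = g_{i+1}$, then provided $g_{i+1} \neq \hat\mu(b_i)$ the pair $\{b_i, g_{i+1}\}$ blocks $\hat\mu$, while the degenerate case $g_{i+1} = \hat\mu(b_i)$ is immediate since then $\hat\mu(g_{i+1}) = b_i \neq b_{i+1}$.

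Finiteness of $B'$ and determinism of the iteration force the sequence to cycle, producing an exposed rotation $R$ of $\mu$. Invoking the Irving-Leather result cited just above the theorem, $\mu \otimes R$ is stable; the bound $g_{i+1} \succeq_{b_i} \hat\mu(b_i)$ for boys in $R$ together with unchanged partners elsewhere yields $\mu \geq \mu \otimes R \geq \hat\mu$, and $\mu \otimes R \neq \mu$ because the rotation is nontrivial. The covering hypothesis then pins down $\mu \otimes R = \hat\mu$. I expect the main obstacle to be the verification that the iterative construction never escapes $B'$, because the blocking-pair argument must separately dispatch the degenerate case $g_{i+1} = \hat\mu(b_i)$; every remaining step reduces either to the lattice duality of Theorem~\ref{thm:inverse} or to the explicit rotation definition.
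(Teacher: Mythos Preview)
The paper does not prove this theorem: it is quoted from Irving and Leather~\cite{IL86} and closed immediately with a \qed, so there is no in-paper argument to compare against. Your proposal therefore supplies a proof where the paper supplies none.

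That said, your argument is sound. In the $\Leftarrow$ direction, the reduction of $\mu'(b_i)$ to $\{g_i,g_{i+1}\}$ via the ``first girl who would accept'' clause combined with Theorem~\ref{thm:inverse} is the right mechanism, and the cyclic propagation (if one $b_i$ shifts to $g_{i+1}$ then $g_{i+1}$ is occupied, forcing $b_{i+1}$ to shift, and so on around the cycle) is valid. In the $\Rightarrow$ direction, your iterative construction is exactly the rotation-digraph successor map, so any cycle it produces is by definition an exposed rotation; you correctly do not assume the cycle contains $b_0$, and instead use the sandwich $\mu > \mu\otimes R \geqslant \hat\mu$ together with the covering hypothesis to force $\mu\otimes R = \hat\mu$. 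The case split on $g_{i+1} = \hat\mu(b_i)$ when verifying $b_{i+1}\in B'$ is handled cleanly. The only external fact you invoke is that $\mu\otimes R$ is stable, which the paper already records (just above the theorem statement) as a result of~\cite{IL86}.
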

Theorem~\ref{thm:covered} implies that we may traverse the stable
matching lattice $(\mathcal{M}, \geqslant)$ using rotations.
As stated, we may also derive a poset $\mathcal{P}=(\mathcal{R}, \geq)$ whose elements are rotations.
Let $\mathcal{R}_\mu$ be the set of all rotations exposed in $\mu$. Then $\mathcal{R} = \bigcup_{\mu\in\mathcal{M}}\mathcal{R}_\mu$ 
is the set of all rotations. We then define the partial order $\geq$ as follows.
Let $R_1\geq R_2$ in $\mathcal{P}$ {\em if and only if} for any stable matching $\mu_1\in\set{\mu\in\mathcal{M}: R_1\in \mathcal{R}_{\mu}}$ 
and any stable matching 
$\mu_2\in \set{\mu\in\mathcal{M}: R_2\in \mathcal{R}_\mu}$, either $\mu_1$ and $\mu_2$ are incomparable 
or $\mu_1\geqslant \mu_2$ in $(\mathcal{M}, \geqslant)$. 
This rotation poset $\mathcal{P}=(\mathcal{R}, \geq)$ is the auxiliary poset for the stable matching 
lattice $(\mathcal{M}, \geqslant)$; see Gusfield and Irving~\cite{GI89}. In particular, there is a bijection between stable matchings 
and {\em antichains} of the rotation poset.

The set of rotations and the rotation poset for our running example are illustrated in Table~\ref{tab:10-rotations} and Figure~\ref{fig:rotation-poset}
of Section~\ref{sec:ex}, respectively.

We remark that, unlike the stable matching lattice, the cardinality of the rotation poset is always polynomial.
Specifically, any boy-girl pair $\{b,g\}$ can appear in at most one rotation~\cite{IL86}. It immediately follows that
the rotation poset has at most $O(n^2)$ elements; in fact, Gusfield~\cite{Gus87} showed how to find all the
rotations in $O(n^2)$ time.

\subsection{The Rotation Graph}\label{sec:rotation-graph}

For any stable matching $\mu=\{ (b_1,g_1), (b_2,g_2),\dots,(b_n, g_n)\}$
we define an auxiliary directed graph $H(\mu)$. This graph, which we call the {\em (exposed) rotation graph}, has a vertex~$i$ for each boy $b_i$.
There is an arc from $i$ to $j$ if the next girl on $b_i$'s list to prefer $b_i$ over her current partner is $g_j$. 
If for some~$b_i$, no such girl exists, then $i$ has out-degree 0; otherwise it has  out-degree 1.
By definition, the rotations exposed in $\mu$ are exactly the cycles of $H(\mu)$.
(See Figure~\ref{fig:rotation-graph} in Section~\ref{sec:ex} for the rotation-graph $H({\bf 1})$ for the running example.)
For example, if $\mu={\bf 1}$ then $H({\bf 1})$ consists of the set of rotations 
exposed in the boy-optimal stable matching. We call these the {\em maximal rotations}. 

A rotation $R$ exposed in $\mu$ is {\em minimal} if $\mu\otimes R={\bf 0}$. Equivalently, the {\em minimal rotations}
are the set of rotations exposed in the girl-optimal stable matching $\bf 0$ when ordering using the preferences of the girls 
rather than the boys.

\section{Incentives in the Stable Matchings Problem}\label{sec:incentives}

Intuitively, because the deferred-acceptance algorithm outputs the boy-optimal stable matching,
there is no incentive for a boy not to propose to the girls in order of preference.
This fact was formally proven by Dubins and Freedman~\cite{DF81}.
On the other hand, because the stable matching is girl-pessimal, it can be beneficial for
a girl to strategize. Indeed, Roth~\cite{Roth82} showed that no stable matching mechanism exists that is 
incentive compatible for every participant.

\subsection{The Minimum Winning Coalition of Girls}
\label{sec:Minimum coalition}
The structure of the stable matching lattice $\mathcal{L}$ is extremely useful in understanding the
incentives that arise in the stable matching problem. For example, the following structure will be of importance in this
paper. Let $F\subseteq G$ be a group of girls and let $\mathcal{M}_F$ be the collection of stable matchings where every girl in~$F$ 
is matched to their best stable-partner. Given the aforementioned properties of the join and meet operation
in the stable matching lattice, it is easy to verify that $\mathcal{L}_F=(\mathcal{M}_F, \geqslant)$ is also a lattice.
Thus, $\mathcal{L}_F$ has a supremum ${\bf 1}_F$ which is the boy-optimal stable matching given that every girl in $F$ is matched to their
best stable-partner. Similarly, $\mathcal{L}_F$ has a infimum ${\bf 0}_F$ which is the boy-pessimal stable matching
given that every girl in $F$ is matched to their best stable-partner.  Observe that ${\bf 0}_F$ is the girl-optimal stable-matching ${\bf 0}$, for any
subset $F$ of the girls.

Why is this useful here? Well, imagine that each girl in $F$ rejects anyone who is not their best stable-partner.
Then the deferred-acceptance algorithm will output the stable matching ${\bf 1}_F$; see also the works of Gale and Sotomayor~\cite{Sot85} 
on strong equilibria and of Gonczarowski~\cite{Gon14} on blacklisting. Of course, if $F=G$ 
then both ${\bf 1}_G$ and ${\bf 0}_G$ must match every girl to their optimal stable partner so ${\bf 1}_G={\bf 0}_G={\bf 0}$.

We will call any $F\subseteq G$ such that ${\bf 1}_F={\bf 0}$ a {\em winning coalition} and the smallest such group is 
called a {\em minimum winning coalition}. Winning coalitions can be found using the rotation poset.
%This observation immediately prompts the question: How large must a coalition $F$ of girls be to be a winning coalition?

\begin{theorem}\label{thm:winning}
A set of girls is a winning coalition if and only if it contains at least one girl from each
minimal rotation in the rotation poset $(\mathcal{R}, \geq)$
\end{theorem}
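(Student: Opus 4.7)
The plan is to translate the condition ${\bf 1}_F = {\bf 0}$ into a covering condition on the rotation poset via the Birkhoff correspondence. The key observation is that, along a chain of rotations descending from ${\bf 1}$ toward ${\bf 0}$, a girl $g$'s partner can only change when a rotation involving $g$ is applied, and whenever it does change, the new partner is strictly preferred by $g$ (by the very definition of a rotation: the rotation takes $g_{i+1}$ from $b_{i+1}$ to a strictly preferred $b_i$).

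Let $\mathcal{R}_g \subseteq \mathcal{R}$ denote the set of rotations in which $g$ appears as one of the $g_i$, and let $I_\mu \subseteq \mathcal{R}$ be the set of rotations not yet applied in going from ${\bf 1}$ down to $\mu$. Then $I_{\bf 1} = \mathcal{R}$, $I_{\bf 0} = \emptyset$, and under the Birkhoff correspondence $I_\mu$ is an order ideal (down-set) of $(\mathcal{R}, \geq)$ whose size is reverse-monotone in $\mu$. The first step will be to prove the key lemma: \emph{girl $g$ receives her best stable partner in $\mu$ if and only if $I_\mu \cap \mathcal{R}_g = \emptyset$}. The $(\Leftarrow)$ direction follows because completing the descent from $\mu$ to ${\bf 0}$ by applying only rotations outside $\mathcal{R}_g$ cannot alter $\mu(g)$, so $\mu(g) = {\bf 0}(g)$. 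For $(\Rightarrow)$, any rotation in $I_\mu \cap \mathcal{R}_g$ would strictly improve $g$'s partner when later applied on the way to ${\bf 0}$, contradicting that $\mu(g)={\bf 0}(g)$ is already her best stable partner.

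With this lemma in hand, $\mathcal{M}_F$ corresponds exactly to those down-sets disjoint from $\bigcup_{g \in F} \mathcal{R}_g$. The supremum ${\bf 1}_F$ therefore corresponds to the largest such down-set, namely $\mathcal{R} \setminus \uparrow\bigl(\bigcup_{g\in F} \mathcal{R}_g\bigr)$, using that a down-set avoids a set $S$ iff it avoids its up-closure $\uparrow S$ (otherwise down-closure would force a violation). Hence ${\bf 1}_F = {\bf 0}$ iff this down-set is empty iff $\uparrow\bigl(\bigcup_{g\in F} \mathcal{R}_g\bigr) = \mathcal{R}$, i.e., every rotation dominates some rotation involving a girl in $F$.

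Finally, I will verify that this last condition is equivalent to ``every minimal rotation of $(\mathcal{R}, \geq)$ contains a girl of $F$.'' The forward direction is immediate from minimality: a minimal rotation $R_0$ that dominates some $R' \in \mathcal{R}_g$ must equal $R'$, so $g \in F$ appears in $R_0$. Conversely, since $(\mathcal{R}, \geq)$ is finite, every rotation dominates some minimal rotation, and if that minimal rotation contains a girl of $F$ then the desired up-set inclusion follows. The main subtlety I expect is the key lemma on best stable partners; once the \emph{strict} improvement of a girl's partner under any rotation involving her is used correctly, the translation to a covering condition on the minimal rotations of $\mathcal{P}$ is essentially routine.
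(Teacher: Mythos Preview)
Your argument is correct and takes a genuinely different route from the paper's. The paper works directly with the stable matchings: it considers the minimal elements $\{\mu_1,\ldots,\mu_k\}$ of $(\mathcal{M}\setminus\{{\bf 0}\},\geqslant)$, defines $U_\ell$ to be the set of girls not matched to their best stable partner in $\mu_\ell$, shows by a meet argument that the $U_\ell$ are pairwise disjoint, and then proves that $F$ is winning iff $F$ meets each $U_\ell$; only at the end does it identify $U_\ell$ with the girls of the $\ell$th minimal rotation. Your proof instead works entirely inside the rotation poset via the Birkhoff correspondence: you establish the key lemma that $g$ has her best stable partner in $\mu$ iff $I_\mu\cap\mathcal{R}_g=\emptyset$, deduce that ${\bf 1}_F$ corresponds to the down-set $\mathcal{R}\setminus\uparrow\!\bigl(\bigcup_{g\in F}\mathcal{R}_g\bigr)$, and reduce ${\bf 1}_F={\bf 0}$ to a covering condition on the minimal elements of $\mathcal{P}$. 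Your approach is arguably cleaner and yields a description of ${\bf 1}_F$ for arbitrary $F$, not just those with ${\bf 1}_F={\bf 0}$. The paper's approach, on the other hand, is more self-contained (it does not invoke the full Birkhoff dictionary) and delivers the pairwise disjointness of the girl-sets of the minimal rotations as a byproduct, which is exactly what is needed for the subsequent Corollary on the cardinality of the minimum winning coalition and for Lemma~\ref{lem:minimum-size}; your proof does not immediately give that disjointness, so if you continue to those results you will need a short additional argument (e.g., the paper's meet trick, or an observation that a girl at her optimal partner cannot appear in any exposed rotation).
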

\begin{proof}
Let $G^2$ be the set of girls who have at least two stable-partners.
For each girl $g_j\in G^2$, let $\mathcal{M}_j$ be the set of stable matchings in which she is {\bf not} matched to her
best stable-partner. Then $\mathcal{L}_j=(\mathcal{M}_j, \geqslant)$ is a lattice with supremum ${\bf 1}^j$ and infimum ${\bf 0}^j$.
Observe that ${\bf 1}^j={\bf 1}$ and ${\bf 0}^j\neq {\bf 0}$.

Now let $\{\mu_1,\mu_2,\dots, \mu_k\}$ be the minimal stable-matchings in the poset $(\mathcal{M}\setminus {\bf 0}, \geqslant)$.
That is, $\{\mu_1,\mu_2,\dots, \mu_k\}$ is the set of matchings such that for any $i\in \set{1,...,k}$ and any stable matching $\mu\notin \{\mu_1,\mu_2,\dots, \mu_k\}\cup \{{\bf 0}\}$, there is a boy who strictly prefers $\mu$ over $\mu_i$.
For each $1\le \ell\le k$, observe that $\mu_\ell={\bf 0}^j$ for some girl $g_j\in G^2$. 
But, if $\mu_\ell={\bf 0}^j$ then girl $g_j$ must be matched to her best stable-partner in $\mu_i$ for any $i\neq \ell$.
Otherwise, because $\mu_\ell \wedge \mu_i = {\bf 0}$ in the stable matching lattice $\mathcal{L}$, it would be the case
that girl $g_j$ is not matched to her best stable-partner in ${\bf 0}$, a contradiction.
Let $U_{\ell}$ be the set of girls who are not matched to their best stable-partner in $\mu_\ell$.
Thus, the sets $U_1$,...,$U_k$ are non-empty and disjoint. 

Let $F$ be any group of girls that contains at least one girl from each set $U_{\ell}$, for $1\le \ell\le k$.
We claim that ${\bf 1}_F={\bf 0}$ and, consequently, $F$ is a winning coalition. For each $1\le \ell\le k$, at least one girl $g_j$ in $F$ is
not matched to best stable-partner in $\mu_\ell$. Thus $\mu_\ell = {\bf 0}^j > {\bf 0}$. It follows immediately that there is
a unique stable-matching, namely the girl-optimal stable matching ${\bf 0}$, that matches every girl in $F$ to their
best stable-partner. Hence, ${\bf 1}_F={\bf 0}$ as claimed.

Conversely, let $F$ be any group of girls such that for some $\ell$, $F\cap U_{\ell}=\emptyset$. We claim that ${\bf 1}_F\neq {\bf 0}$ and, 
consequently, $F$ is a losing coalition. By definition, every girl in $F$ is matched to their best stable-partner in $\mu_\ell$. But then, by definition, ${\bf 1}_F\geqslant \mu_\ell > {\bf 0}$ as claimed.

Finally, observe that our definition of $U_{i}$ is exactly the set of girls in the unique rotation exposed in $\mu_i$ 
which is a minimal rotation of the rotation poset $(\mathcal{R}, \geq)$ which proves the statement of the theorem.
\end{proof}
Theorem~\ref{thm:winning} allows us to find a minimum winning coalition.
\begin{corollary}\label{cor:minimum-size}
The cardinality of the minimum winning coalition is equal to the cardinality of the set of 
minimal rotations in the rotation poset $(\mathcal{R}, \geq)$.
\end{corollary}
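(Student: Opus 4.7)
The plan is to read off the corollary directly from Theorem~\ref{thm:winning} together with the disjointness of the sets $U_1, \ldots, U_k$ that was already established inside its proof. By Theorem~\ref{thm:winning}, a subset $F \subseteq G$ is a winning coalition if and only if $F$ is a hitting set (transversal) for the family of minimal rotations. So the task reduces to showing that the minimum size of such a hitting set equals the number $k$ of minimal rotations.

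First I would invoke the disjointness statement from the proof of Theorem~\ref{thm:winning}: the sets $U_1, \ldots, U_k$ of girls appearing in the $k$ minimal rotations are pairwise disjoint and nonempty. (The underlying structural reason, which I would cite, is that any boy-girl pair appears in at most one rotation, so the same girl cannot lie in two distinct rotations simultaneously.) This immediately gives the lower bound: any hitting set $F$ for $U_1, \ldots, U_k$ must contain at least one girl from each $U_\ell$, and by disjointness these contributions cannot overlap, so $|F| \geq k$.

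For the matching upper bound, I would simply pick one girl $g^{(\ell)} \in U_\ell$ from each minimal rotation and let $F = \{g^{(1)}, \ldots, g^{(k)}\}$. By construction $F$ meets every $U_\ell$, so Theorem~\ref{thm:winning} tells us $F$ is a winning coalition, and $|F| = k$ exactly because the $U_\ell$'s are disjoint. Combining the two bounds yields the desired equality between the minimum winning coalition size and the number of minimal rotations in $(\mathcal{R}, \geq)$.

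There is essentially no obstacle here beyond making the disjointness explicit; the real content is packaged inside Theorem~\ref{thm:winning}, and the corollary is a one-line consequence once one observes that a hitting set for a family of pairwise disjoint sets has minimum size equal to the number of sets. The only thing to be careful about is to make clear that the $U_\ell$'s in the proof of Theorem~\ref{thm:winning} are exactly the girl-sides of the minimal rotations (as noted in its last line), so that the hitting-set reformulation is literally a statement about minimal rotations.
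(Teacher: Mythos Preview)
Your proposal is correct and is exactly the intended argument: the paper states the corollary without proof, as an immediate consequence of Theorem~\ref{thm:winning} and the disjointness of the sets $U_1,\ldots,U_k$ established therein, and you have simply written that out.

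One small correction to your parenthetical aside: the fact that any boy--girl \emph{pair} appears in at most one rotation does \emph{not} imply that a girl lies in at most one rotation; the same girl can appear in many rotations, paired with different boys each time. The disjointness of the $U_\ell$'s is special to the minimal rotations and is proved in Theorem~\ref{thm:winning} by a different argument (if $g_j$ is not matched to her best stable-partner in $\mu_\ell$ then she must be so matched in every other $\mu_i$, since $\mu_\ell \wedge \mu_i = \mathbf{0}$). Since you correctly cite that disjointness directly, your proof stands; just drop or fix the parenthetical justification.
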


Section~\ref{sec:ex} provides an illustration of how rotations correspond to stable matchings and gives a 
minimum winning coalition for the running example.

\subsection{Efficiency and Extremal Properties}

From the structure inherent in Theorem~\ref{thm:winning} and Corollary~\ref{cor:minimum-size} 
we can make several straight-forward deductions regarding winning coalitions.

First, Theorem~\ref{thm:winning} implies that we have a polynomial algorithm to verify winning coalitions.
Likewise Corollary~\ref{cor:minimum-size} implies that we have a polynomial time algorithm to compute
the minimum winning coalition. In fact, the techniques of Gusfield~\cite{Gus87}
(see also~\cite{GI89}) can now be used to solve both problems in $O(n^2)$ 
time.

Second, we can upper bound the cardinality of the minimum winning coalition.

\begin{lemma}\label{lem:minimum-size}
In any stable matching problem the  minimum winning coalition has cardinality at most $\floor{\frac{n}{2}}$.
\end{lemma}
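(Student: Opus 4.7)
My plan is to invoke Corollary~\ref{cor:minimum-size}, which reduces the task to proving that the number $k$ of minimal rotations in the rotation poset $(\mathcal{R},\geq)$ is at most $\lfloor n/2 \rfloor$. The strategy is to associate to each minimal rotation a set of girls, and then show that these sets are both pairwise disjoint and of size at least two.

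For the disjointness, I would piggy-back directly on the proof of Theorem~\ref{thm:winning}. There, for each minimal rotation $R_\ell$, the set $U_\ell\subseteq G$ of girls whose partners shift when $R_\ell$ is applied (equivalently, the girls whose pairs lie in the single rotation moving the minimal matching $\mu_\ell$ down to $\mathbf{0}$) is already shown to be non-empty, and the sets $U_1,\ldots,U_k$ are established there to be pairwise disjoint. So I simply cite this; no extra lattice-theoretic argument is needed.

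For the lower bound $|U_\ell|\geq 2$, I would unfold the definition of a rotation $R=[(b_0,g_0),(b_1,g_1),\ldots,(b_j,g_j)]$ given in Section~\ref{sec:rotation-poset}. Since $g_{i+1 \bmod (j+1)}$ must lie strictly after $g_i$ on $b_i$'s preference list, the degenerate case $j=0$ is ruled out (it would force $g_0$ to come after itself), so $j\geq 1$ and $R$ involves at least the two distinct girls $g_0$ and $g_1$. Combining disjointness with this size bound yields $2k \leq \sum_{\ell=1}^{k}|U_\ell|\leq|G|=n$, and hence $k\leq \lfloor n/2 \rfloor$, as required. I do not foresee any real obstacle: all the lattice-theoretic content is already packaged inside Theorem~\ref{thm:winning}, and what remains is a one-line counting argument.
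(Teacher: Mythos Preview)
Your proposal is correct and follows essentially the same approach as the paper's proof: both invoke the disjointness of the sets $U_1,\ldots,U_k$ established in the proof of Theorem~\ref{thm:winning}, observe that each such set has cardinality at least two, and conclude by counting. Your justification of $|U_\ell|\geq 2$ directly from the definition of a rotation is in fact slightly cleaner than the paper's phrasing.
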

\begin{proof}
Consider the minimal stable-matchings $\{\mu_1,\mu_2,\dots, \mu_k\}$ in the poset $(\mathcal{M}\setminus {\bf 0}, \geqslant)$.
We claim $k\le \lfloor \frac{n}{2} \rfloor$. To prove this, observe that since $\forall \ell\in [k]$ $\mu_\ell \notin \{\mu_1,\mu_2,\dots, \mu_k\}\cup \{{\bf 0}\}$ there must be at least two girls
who are not matched to their best stable-partners in the stable matching~$\mu_\ell$. Furthermore, recall that each girl is matched to
their best stable-partner in every matching $\{\mu_1,\mu_2,\dots, \mu_k\}$ except at most one. It immediately follows that 
$k\le \lfloor \frac{n}{2} \rfloor$.
\end{proof}

Can this upper bound on the cardinality of the minimum winning coalition ever be obtained?
The answer is yes. In fact, every integer between $0$ and $\lfloor \frac{n}{2} \rfloor$
can be the cardinality of the smallest winning coalition.
%Give the idea/intuition
\begin{theorem}\label{thm:all-integers}
For each $0\le k \le  \lfloor \frac{n}{2} \rfloor$ there exists a stable matching instance
where the minimum winning coalition has cardinality exactly $k$.
\end{theorem}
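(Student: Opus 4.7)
The plan is to invoke Corollary~\ref{cor:minimum-size} and exhibit, for every $k$ in the allowed range, an $n$-boy/$n$-girl instance whose rotation poset has exactly $k$ minimal rotations. I would build these instances from two elementary ``blocks'' combined block-diagonally.

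The first block, call it $A$, is the $2 \times 2$ instance with $g_1 \succ_{b_1} g_2$, $g_2 \succ_{b_2} g_1$, $b_2 \succ_{g_1} b_1$, and $b_1 \succ_{g_2} b_2$. A direct check shows that $A$ admits exactly two stable matchings, $\{(b_1,g_1),(b_2,g_2)\}$ and $\{(b_1,g_2),(b_2,g_1)\}$, connected by the single rotation $[(b_1,g_1),(b_2,g_2)]$, which is simultaneously maximal and minimal; so the rotation poset of $A$ consists of exactly one minimal rotation. The second block, call it $B$, is the trivial $1 \times 1$ instance, which has a unique stable matching and an empty rotation poset.

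For each $0 \le k \le \lfloor n/2 \rfloor$, I would form the instance $I_{n,k}$ by taking the disjoint union of $k$ copies of $A$ and $n-2k$ copies of $B$, concatenating the preferences so that every boy ranks all girls in his own block (in the block's internal order) strictly above every girl from any other block, and symmetrically for the girls, with foreign-block ties broken arbitrarily. The structural claim that drives the proof is that in any stable matching of $I_{n,k}$, each boy is matched to a girl of his own block: otherwise some boy $b$ of a block $X$ is matched to an outside girl, and a counting argument then forces some outside boy $b'$ to be matched to some girl $g$ of $X$; but then $(b,g)$ would block, since $b$ strictly prefers $g$ to his current partner and $g$ strictly prefers $b$ to $b'$.

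From this block-decomposition claim it follows that the stable matchings of $I_{n,k}$ are exactly the pointwise unions of stable matchings of the constituent blocks, so the rotations of $I_{n,k}$ are exactly the rotations of the individual blocks and the rotation poset of $I_{n,k}$ is the disjoint union of the blocks' rotation posets. Therefore $I_{n,k}$ has exactly $k$ minimal rotations, and Corollary~\ref{cor:minimum-size} then yields a minimum winning coalition of cardinality exactly $k$. The only mildly delicate step is verifying the block-decomposition claim for stable matchings; the remainder is direct bookkeeping.
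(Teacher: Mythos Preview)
Your proposal is correct and is essentially the same construction as the paper's own proof: the paper too builds $k$ independent $2\times 2$ ``opposite-preference'' blocks together with $n-2k$ trivially-fixed agents, and deduces that the minimum winning coalition has size exactly $k$. The only cosmetic difference is that you route the conclusion through the block-decomposition of the rotation poset and Corollary~\ref{cor:minimum-size}, whereas the paper argues directly that one girl per $2\times 2$ block is both necessary and sufficient.
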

\begin{proof}
Take any $0\le k \le  \lfloor \frac{n}{2} \rfloor$. 
We construct a stable matching instance
where the minimum winning coalition has cardinality exactly $k$ as follows.
For $2k+1 \le \ell \le n$, let boy $b_\ell$ and girl $g_\ell$ rank each other
top of their preference lists -- the other rankings in their preference lists may be arbitrary. 
Thus, boy $b_\ell$ and girl $g_\ell$ must be matched together in every stable matching. 

For $1 \le \ell \le k$, let boy $b_{2\ell-1}$ rank girl $g_{2\ell-1}$ first and girl $g_{2\ell}$ second
and let boy $b_{2\ell}$ rank girl $g_{2\ell}$ first and girl $g_{2\ell-1}$ second.
In contrast, let girl $g_{2\ell-1}$ rank boy $b_{2\ell}$ first and boy $b_{2\ell-1}$ second
and let girl $g_{2\ell}$ rank boy $b_{2\ell-1}$ first and boy $b_{2\ell}$ second.
Again, all other rankings may be arbitrary.

It is then easy to verify that two possibilities arise. In any stable matching for each $1\leq \ell\leq k$ either (i) both 
boys $b_{2\ell-1}$ and $b_{2\ell}$ are matched to their best stable-partners, namely 
girls $g_{2\ell-1}$ and $g_{2\ell}$, respectively, or (ii) both 
boys $b_{2\ell-1}$ and $b_{2\ell}$ are matched to their worst stable-partners, namely 
girls $g_{2\ell}$ and $g_{2\ell-1}$, respectively.

But this implies that to obtain the girl-optimal stable matching at least one girl from the
pair $\{g_{2\ell-1}, g_{2\ell}\}$ must misreport her preferences, for each $1 \le \ell \le k$.
One girl from each of these pairs is also sufficient to output the girl-optimal stable matching.
Thus the minimum winning coalition has cardinality exactly $k$.
\end{proof}

We remark that the instances constructed in the proof of Theorem~\ref{thm:all-integers} have $2^k$ stable matchings. 
As $k$ can be as large as $\lfloor \frac{n}{2} \rfloor$, this gives a simple proof of the well known fact that the number of stable 
matchings may be exponential in the number of participants~\cite{Knu82}.

We now have all the tools required to address the main questions in this paper. 
We will first, as promised, illustrate these tools using an example.

\newpage
\section{An Illustrative Example}\label{sec:ex}

Here we present an example to illustrate the main concepts covered in the paper.
This stable matching instance is derived from an example constructed by Irving et al.~\citep{IRL87}. 
There are eight boys and eight girls whose preference rankings are shown in Table~\ref{tab:preferences}.

\begin{table}[ht!]
\centering
\caption{A Stable Matching Instance.}

\begin{tabular}{|c|c|c|c|c|c|c|c|c|}
\hline
{\bf \backslashbox{Boy}{Rank}} & {\bf 1} & {\bf 2} & {\bf 3} & {\bf 4} &  {\bf 5} & {\bf 6} &  {\bf 7} & {\bf 8}\\
\hline\hline
${\bf b_1}$ & $g_4$ & $g_3$ & $g_8$ & $g_1$ & $g_2$ & $g_5$ & $g_7$ & $g_6$\\

${\bf b_2}$ & $g_3$ & $g_7$ & $g_5$ & $g_8$ & $g_6$ & $g_4$ & $g_1$ & $g_2$\\

${\bf b_3}$ & $g_7$ & $g_5$ & $g_8$ & $g_3$ & $g_6$ & $g_2$ & $g_1$ & $g_4$\\

${\bf b_4}$ & $g_6$ & $g_4$ & $g_2$ & $g_7$ & $g_3$ & $g_1$ & $g_5$ & $g_8$\\

${\bf b_5}$ & $g_8$ & $g_7$ & $g_1$ & $g_5$ & $g_6$ & $g_4$ & $g_3$ & $g_2$\\

${\bf b_6}$ & $g_5$ & $g_4$ & $g_7$ & $g_6$ & $g_2$ & $g_8$ & $g_3$ & $g_1$\\

${\bf b_7}$ & $g_1$ & $g_4$ & $g_5$ & $g_6$ & $g_2$ & $g_8$ & $g_3$ & $g_7$\\

${\bf b_8}$ & $g_2$ & $g_5$ & $g_4$ & $g_3$ & $g_7$ & $g_8$ & $g_1$ & $g_6$\\
\hline
\end{tabular}

\

\

\begin{tabular}{|c|c|c|c|c|c|c|c|c|}
\hline
{\bf \backslashbox{Girl}{Rank}} & {\bf 1} & {\bf 2} & {\bf 3} & {\bf 4} &  {\bf 5} & {\bf 6} &  {\bf 7} & {\bf 8}\\
\hline\hline
${\bf g_1}$ & $b_3$ & $b_1$ & $b_5$ & $b_7$ & $b_4$ & $b_2$ & $b_8$ & $b_6$\\

${\bf g_2}$ & $b_6$ & $b_1$ & $b_3$ & $b_4$ & $b_8$ & $b_7$ & $b_5$ & $b_2$\\

${\bf g_3}$ & $b_7$ & $b_4$ & $b_3$ & $b_6$ & $b_5$ & $b_1$ & $b_2$ & $b_8$\\

${\bf g_4}$ & $b_5$ & $b_3$ & $b_8$ & $b_2$ & $b_6$ & $b_1$ & $b_4$ & $b_7$\\

${\bf g_5}$ & $b_4$ & $b_1$ & $b_2$ & $b_8$ & $b_7$ & $b_3$ & $b_6$ & $b_5$\\

${\bf g_6}$ & $b_6$ & $b_2$ & $b_5$ & $b_7$ & $b_8$ & $b_4$ & $b_3$ & $b_1$\\

${\bf g_7}$ & $b_7$ & $b_8$ & $b_1$ & $b_6$ & $b_2$ & $b_3$ & $b_4$ & $b_5$\\

${\bf g_8}$ & $b_2$ & $b_6$ & $b_7$ & $b_1$ & $b_8$ & $b_3$ & $b_4$ & $b_5$\\
\hline
\end{tabular}
\label{tab:preferences}
\end{table}

This instance has 23 stable matchings. To see this, let's begin by running the deferred acceptance algorithm (Algorithm~1) to find the 
boy-optimal stable matching. Observe that all the boys have different first preferences. Thus, the boys will consecutively each propose to their first 
choice who will temporarily accept; once all the boys have proposed these matches will become permanent.
Thus, the boy-optimal matching ${\bf 1}=M_1$ simply matches each boy with his favourite girl.
To find the remaining stable matchings, we swap partners using rotations. 

We start by finding the rotations exposed at ${\bf 1}= M_1$.
To do this we simply search for the for directed cycles in the {\em (exposed) rotation graph} $H({\bf 1})$.  
Recall, this graph has eight vertices, one for each boy. The first vertex $1$ corresponds to boy $b_1$ who is matched to girl $g_4$
in $M_1$. If girl $g_4$ breaks up with $b_1$ then he will propose to his second choice, girl $g_3$.
She prefers him over her current partner $b_2$ so will accept this offer. 
Thus boy $b_1$ will gain the current partner of boy $b_2$; hence there is an arc $(1,2)$ in the rotation graph $H({\bf 1})$.
To find the outgoing arc at vertex $2$ assume that $g_3$ does break-up with boy $b_2$. He will then propose 
 his second choice, girl $g_7$.
She prefers him over her current partner $b_3$ so will accept this offer. Thus, there is an arc at $(2,3)$. 
Now $(b_3,g_7)\in M_1$ and if girl $g_7$ breaks-up with $b_3$ then he will next propose to $g_5$. She will accept as she prefers $b_3$ over her current partner
boy $b_6$. So the rotation graph contains the arc $(3, 6)$.
Next consider boy $b_4$. If his partner, girl $g_6$ breaks up with him then he will propose to girl $g_4$. She will reject this proposal
as she prefers her current partner $b_1$ over $b_4$. So $b_4$ will then propose to his third choice $g_2$ and this proposal will be accepted
as she prefers him over her current partner $b_8$. Hence $H({\bf 1})$ contains the arc $(4, 8)$.
Continuing in this fashion, the reader can verify that the rotation graph $H({\bf 1})$ is as shown in Figure~\ref{fig:rotation-graph}.

\begin{figure}[ht!]
 %   \begin{minipage}{0.96\textwidth}
        \centering
            \begin{tikzpicture}[scale=.7]
              \node [shape=circle, draw] (1) at (0,2) {$1$}; % {$(b_1,g_4)$}
              \node [shape=circle, draw] (2) at (2,2) {$2$}; %{$(b_2,g_3)$};
              \node [shape=circle, draw] (3) at (2,0) {$3$}; %{$(b_3,g_7)$};
              \node [shape=circle, draw] (4) at (-4,0){$4$}; % {$(b_4,g_6)$};
              \node [shape=circle, draw] (5) at (6,0) {$5$}; %{$(b_5,g_8)$};
              \node [shape=circle, draw] (6) at (0,0){$6$}; % {$(b_6,g_5)$};
              \node [shape=circle, draw] (7) at (4,0){$7$}; % {$(b_7,g_1)$};
              \node [shape=circle, draw] (8) at (-2,0) {$8$}; %{$(b_8,g_2)$};
              \draw [->, thick] (1) -- (2);
              \draw [->, thick] (2) -- (3);
              \draw [->, thick] (3) -- (6);
              \draw [->, thick] (4) -- (8);
              \draw [->, thick] (5) -- (7);
              \draw [->, thick] (6) -- (1);
              \draw [->, thick] (7) -- (3);
              \draw [->, thick] (8) -- (6);
            \end{tikzpicture}    
 %   \end{minipage}\hfill
    \caption{The (Exposed) Rotation Graph $H({\bf 1)}$ at the Boy-Optimal Stable Matching.}\label{fig:rotation-graph}
    %\Description{In this rotation graph we have (1,2,3,6) as a directed cycle. We also have a directed path from 5 to 7 to 3 and a directed path from 4 to 8 to 6.}
\end{figure}
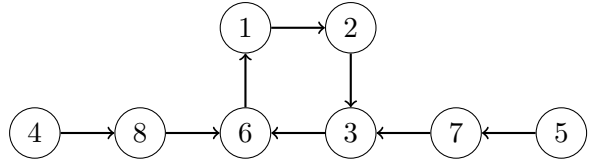

Observe that the rotation graph $H(M_1)$ contains a single cycle $\{v_1, v_2, v_3, v_6\}$. 
Consequently, there is exactly one exposed rotation, namely $\rho_1=\set{(b_1,g_4),(b_2,g_3),(b_3,g_7),(b_6,g_5)}$.
We remark that this is the unique maximal rotation for this stable matching instance.
Thus from $M_1=(g_4, g_3, g_7, g_6, g_8, g_5, g_1, g_2)$ we may create one new stable matching by performing
the rotation $\rho_1$. Specifically, we rotate the partners of the boys $\{b_1, b_2, b_3, b_6\}$.
This gives the stable matching $M_2=(g_3, g_7, g_5, g_6, g_8, g_4, g_1, g_2)$.

Similarly, for $M_2$ the rotation graph $H(M_2)$ contains a single directed cycle
$\{v_3, v_5, v_7\}$. Rotating the partners of boys $\{b_3, b_5, b_7\}$ then produces the
stable matching $M_3=(g_3, g_7, g_8, g_6, g_1, g_4, g_5, g_2)$.
The rotation graph $H(M_3)$ contains two directed cycle correspond to $\rho_{3}$ and $\rho_{4}$.
These are specified, along with all the other rotations in Table~\ref{tab:10-rotations}.

\begin{table}[ht!]
\centering
 \caption{The Set of the Rotations $\mathcal{R}$.}\label{tab:10-rotations}
\begin{tabular}{|c|c|}
\hline
{\bf Rotation} & {\bf Rotation }\\
\hline\hline
$\rho_1$ & $[(b_1,g_4),(b_2,g_3),(b_3,g_7),(b_6,g_5)]$\\
\hline
$\rho_2$ & $[(b_3,g_5),(b_5,g_8),(b_7,g_1)]$\\
\hline
$\rho_3$ & $[(b_4,g_6),(b_8,g_2),(b_7,g_5)]$\\
$\rho_4$ & $[(b_1,g_3),(b_3,g_8)]$\\
\hline
$\rho_5$ & $[(b_2,g_7),(b_8,g_5),(b_6,g_4)]$\\
$\rho_6$ & $[(b_3,g_3),(b_4,g_2)]$\\
$\rho_7$ & $[(b_1,g_8),(b_5,g_1),(b_7,g_6)]$\\
\hline
$\rho_8$ & $[(b_5,g_6),(b_8,g_4),(b_6,g_7)]$\\
$\rho_9$ & $[(b_2,g_5),(b_7,g_8),(b_4,g_3)]$\\
$\rho_{10}$ & $[(b_1,g_1),(b_3,g_2)]$\\
\hline
\end{tabular}
\end{table}

These ten rotations form the {\em rotation poset} $(\mathcal{R}, \geq)$ whose Hasse diagram is given in Figure~\ref{fig:rotation-poset}.
As shown the rotation $\rho_1$ is the unique maximal rotation, the rotation exposed at the boy-optimal stable matching ${\bf 1}=M_1$.
On the other hand, $\rho_8, \rho_9$ and $\rho_{10}$ are the minimal rotations. These rotations lead to the boy-pessimal (girl-optimal)
stable matching ${\bf 0}=M_{23}$. 

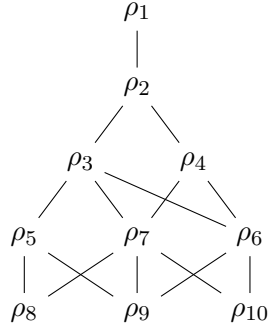
\begin{figure}[ht!]
   % \begin{minipage}{0.96\textwidth}
        \centering
            \begin{tikzpicture}[scale=.5]
              \node (B-Opt) at (0,4) {$\rho_1$};
              \node (1) at (0,2) {$\rho_2$};
              \node (2-A) at (1.5,0) {$\rho_4$};
              \node (2-B) at (-1.5,0) {$\rho_3$};
              \node (3-A) at (3,-2) {$\rho_6$};
              \node (3-B) at (0,-2) {$\rho_7$};
              \node (3-C) at (-3,-2) {$\rho_5$};
              \node (4-A) at (3,-4) {$\rho_{10}$};
              \node (4-B) at (0,-4) {$\rho_9$};
              \node (4-C) at (-3,-4) {$\rho_8$};
              \draw (B-Opt) -- (1) -- (2-A) -- (3-A) -- (4-A) -- (3-B) -- (2-A);
              \draw (1) -- (2-B) -- (3-A) -- (4-B) -- (3-B) -- (2-B) -- (3-C) -- (4-C) -- (3-B);
              \draw (3-C) -- (4-B);
            \end{tikzpicture}
    
%    \end{minipage}\hfill
    \caption{The Hasse diagram of the Rotation Poset $(\mathcal{R}, \geq)$.}\label{fig:rotation-poset}
    %\Description{The rotations are numbered rho 1 to rho 10 from top to bottom. The first 2 levels have 1 rotation. The third one has 2. The last 2 levels have 3 rotations.
	%Each rotation is connected to every rotation of the next level except rho 4 which isn't connected to rho 5, rho 5 which isn't connected to rho 10 and rho 6 which isn't connected to rho 7.}
\end{figure}

Applying these rotations in the appropriate order allows us to generate all $23$ stable matchings given in Table~\ref{tab:23-stable-matchings}.

\begin{table}[ht!]
\centering
 \caption{The Set of Stable Matchings $\mathcal{M}$. For a matching $M$, $n(M)$ is the set of rotations 
 exposed in $M$ and $p(M)$ is the set of rotations that can precede obtaining $M$}\label{tab:23-stable-matchings}

\begin{tabular}{|c|c|c|c|c|c|c|c|c|c|c|}

\hline

{\bf $M$} & {\bf $n(M)$} & {\bf $p(M)$} & 
${\bf g_1}$ & ${\bf g_2}$ & ${\bf g_3}$ & ${\bf g_4}$ & ${\bf g_5}$ & ${\bf g_6}$ & ${\bf g_7}$ & ${\bf g_8}$\\

\hline\hline
$M_1$ & $\rho_1$ & $\emptyset$ &
$b_7$ & $b_8$ & $b_2$ & $b_1$ & $b_6$ & $b_4$ & $b_3$ & $b_5$\\

\hline

$M_2$ & $\rho_2$ & $\rho_1$ &
$b_7$ & $b_8$ & $b_1$ & $b_6$ & $b_3$ & $b_4$ & $b_2$ & $b_5$\\

\hline

$M_3$ & $\rho_3, \rho_4$ & $\rho_2$ &
$b_5$ & $b_8$ & $b_1$ & $b_6$ & $b_7$ & $b_4$ & $b_2$ & $b_3$\\

\hline

$M_4$ & $\rho_3$ & $\rho_4$ &
$b_5$ & $b_8$ & $b_3$ & $b_6$ & $b_7$ & $b_4$ & $b_2$ & $b_1$\\

$M_5$ & $\rho_4, \rho_5$ & $\rho_3$ &
$b_5$ & $b_4$ & $b_1$ & $b_6$ & $b_8$ & $b_7$ & $b_2$ & $b_3$\\

\hline

$M_6$ & $\rho_5,\rho_6,\rho_7$ & $\rho_3, \rho_4$ &
$b_5$ & $b_4$ & $b_3$ & $b_6$ & $b_8$ & $b_7$ & $b_2$ & $b_1$\\

$M_7$ & $\rho_4$ & $\rho_5$ &
$b_5$ & $b_4$ & $b_1$ & $b_8$ & $b_2$ & $b_7$ & $b_6$ & $b_3$\\

\hline

$M_8$ & $\rho_5,\rho_7$ & $\rho_6$ &
$b_5$ & $b_3$ & $b_4$ & $b_6$ & $b_8$ & $b_7$ & $b_2$ & $b_1$\\

$M_9$ & $\rho_5,\rho_6$ & $\rho_7$ &
$b_1$ & $b_4$ & $b_3$ & $b_6$ & $b_8$ & $b_5$ & $b_2$ & $b_7$\\

$M_{10}$ & $\rho_6,\rho_7$ & $\rho_4,\rho_5$ &
$b_5$ & $b_4$ & $b_3$ & $b_8$ & $b_2$ & $b_7$ & $b_6$ & $b_1$\\

\hline
$M_{11}$ & $\rho_5,\rho_{10}$ & $\rho_6,\rho_7$ &
$b_1$ & $b_3$ & $b_4$ & $b_6$ & $b_8$ & $b_5$ & $b_2$ & $b_7$\\

$M_{12}$ & $\rho_7$ & $\rho_5,\rho_6$ &
$b_5$ & $b_3$ & $b_4$ & $b_8$ & $b_2$ & $b_7$ & $b_6$ & $b_1$\\

$M_{13}$ & $\rho_6,\rho_8$ & $\rho_5,\rho_7$ &
$b_1$ & $b_4$ & $b_3$ & $b_8$ & $b_2$ & $b_5$ & $b_6$ & $b_7$\\

\hline

$M_{14}$ & $\rho_5$ & $\rho_{10}$ &
$b_3$ & $b_1$ & $b_4$ & $b_6$ & $b_8$ & $b_5$ & $b_2$ & $b_7$\\

$M_{15}$ & $\rho_8,\rho_9,\rho_{10}$ & $\rho_7,\rho_6,\rho_5$ &
$b_1$ & $b_3$ & $b_4$ & $b_8$ & $b_2$ & $b_5$ & $b_6$ & $b_7$\\

$M_{16}$ & $\rho_6$ & $\rho_8$ &
$b_1$ & $b_4$ & $b_3$ & $b_5$ & $b_2$ & $b_6$ & $b_8$ & $b_7$\\

\hline

$M_{17}$ & $\rho_8,\rho_9$ & $\rho_5,\rho_{10}$ &
$b_3$ & $b_1$ & $b_4$ & $b_8$ & $b_2$ & $b_5$ & $b_6$ & $b_7$\\

$M_{18}$ & $\rho_8,\rho_{10}$ & $\rho_9$ &
$b_1$ & $b_3$ & $b_7$ & $b_8$ & $b_4$ & $b_5$ & $b_6$ & $b_2$\\

$M_{19}$ & $\rho_9,\rho_{10}$ & $\rho_6,\rho_8$ &
$b_1$ & $b_3$ & $b_4$ & $b_5$ & $b_2$ & $b_6$ & $b_8$ & $b_7$\\

\hline

$M_{20}$ & $\rho_8$ & $\rho_9,\rho_{10}$ &
$b_3$ & $b_1$ & $b_7$ & $b_8$ & $b_4$ & $b_5$ & $b_6$ & $b_2$\\

$M_{21}$ & $\rho_9$ & $\rho_8,\rho_{10}$ &
$b_3$ & $b_1$ & $b_4$ & $b_5$ & $b_2$ & $b_6$ & $b_8$ & $b_7$\\

$M_{22}$ & $\rho_{10}$ & $\rho_8,\rho_9$ &
$b_1$ & $b_3$ & $b_7$ & $b_5$ & $b_4$ & $b_6$ & $b_8$ & $b_2$\\

\hline

$M_{23}$ & $\emptyset$ & $\rho_8,\rho_9,\rho_{10}$ &
$b_3$ & $b_1$ & $b_7$ & $b_5$ & $b_4$ & $b_6$ & $b_8$ & $b_2$\\

\hline
\end{tabular}
\end{table}

These stable matchings then form the stable matching lattice $(\mathcal{M}, \geqslant)$ whose Hasse diagram is illustrated 
in Figure~\ref{fig:sm-lattice}.
In this diagram, each edge is labelled by the rotation that transforms the upper stable matching into the lower stable matching.
For example, the rotation $\rho_9$ is exposed at $M_{15}$ and applying it produces the matching $M_{18}$; similarly, the rotation $\rho_4$ is 
exposed at $M_5$ and induces $M_6$.

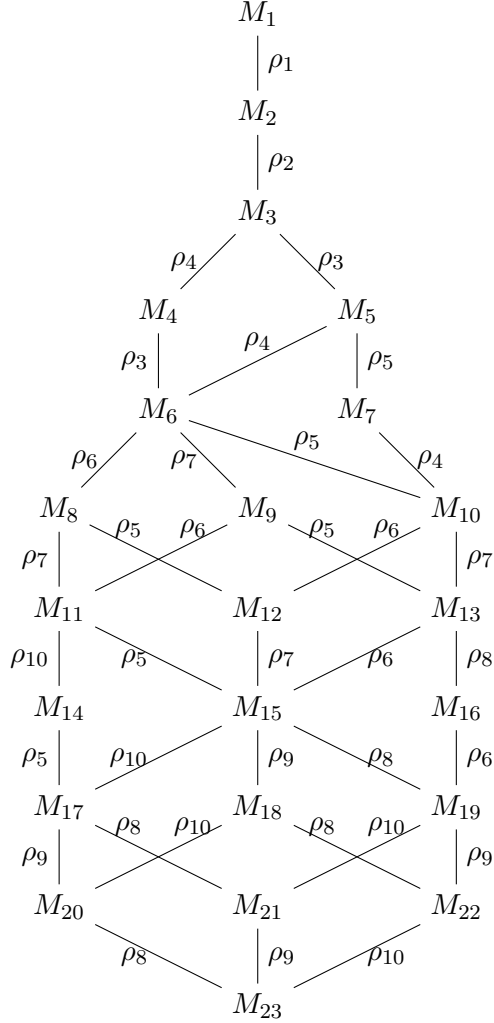
\begin{figure}[ht!]
%    \begin{minipage}{0.96\textwidth}
        \centering
            \begin{tikzpicture}[scale=.66]
              \node (0) at (0,-9) {$M_{23}$};
              \node (1-A) at (4,-7) {$M_{22}$};
              \node (1-B) at (0,-7) {$M_{21}$};
              \node (1-C) at (-4,-7) {$M_{20}$};
              \node (2-A) at (4,-5) {$M_{19}$};
              \node (2-B) at (0,-5) {$M_{18}$};
              \node (2-C) at (-4,-5) {$M_{17}$};
              \node (3-A) at (4,-3) {$M_{16}$};
              \node (3-B) at (0,-3) {$M_{15}$};
              \node (3-C) at (-4,-3) {$M_{14}$};
              \node (4-A) at (4,-1) {$M_{13}$};
              \node (4-B) at (0,-1) {$M_{12}$};
              \node (4-C) at (-4,-1) {$M_{11}$};
              \node (5-A) at (4,1) {$M_{10}$};
              \node (5-B) at (0,1) {$M_9$};
              \node (5-C) at (-4,1) {$M_8$};
              \node (6-A) at (2,3) {$M_7$};
              \node (6-B) at (-2,3) {$M_6$};
              \node (7-A) at (2,5) {$M_5$};
              \node (7-B) at (-2,5) {$M_4$};
              \node (8) at (0,7) {$M_3$};
              \node (9) at (0,9) {$M_2$};
              \node (10) at (0,11) {$M_1$};

			  \draw (0,10) node [right] {$\rho_1$};
			  \draw (0,8) node [right] {$\rho_2$};

			  \draw (1,6) node [right] {$\rho_3$};
			  \draw (-1,6) node [left] {$\rho_4$};

			  \draw (-2,4) node [left] {$\rho_3$};
			  \draw (0,4) node [above] {$\rho_4$};
			  \draw (2,4) node [right] {$\rho_5$};

			  \draw (-3,2) node [left] {$\rho_6$};
			  \draw (-1,2) node [left] {$\rho_7$};
			  \draw (1,2) node [above] {$\rho_5$};
			  \draw (3,2) node [right] {$\rho_4$};

			  \draw (-4,0) node [left] {$\rho_7$};
			  \draw (-2.6,1) node [below] {$\rho_5$};
			  \draw (-1.3,1) node [below] {$\rho_6$};
			  \draw (1.3,1) node [below] {$\rho_5$};
			  \draw (2.6,1) node [below] {$\rho_6$};
			  \draw (4,0) node [right] {$\rho_7$};

			  \draw (-4,-2) node [left] {$\rho_{10}$};
			  \draw (-2,-2) node [left] {$\rho_5$};
			  \draw (0,-2) node [right] {$\rho_7$};
			  \draw (2,-2) node [right] {$\rho_6$};
			  \draw (4,-2) node [right] {$\rho_8$};

			  \draw (-4,-4) node [left] {$\rho_5$};
			  \draw (-2,-4) node [left] {$\rho_{10}$};
			  \draw (0,-4) node [right] {$\rho_9$};
			  \draw (2,-4) node [right] {$\rho_8$};
			  \draw (4,-4) node [right] {$\rho_6$};

			  \draw (-4,-6) node [left] {$\rho_9$};
			  \draw (-2.6,-5) node [below] {$\rho_8$};
			  \draw (-1.3,-5) node [below] {$\rho_{10}$};
			  \draw (1.3,-5) node [below] {$\rho_8$};
			  \draw (2.6,-5) node [below] {$\rho_{10}$};
			  \draw (4,-6) node [right] {$\rho_9$};

			  \draw (-2,-8) node [left] {$\rho_8$};
			  \draw (0,-8) node [right] {$\rho_9$};
			  \draw (2,-8) node [right] {$\rho_{10}$};

              \draw (0) -- (1-A);

              \draw (0) -- (1-B);
              \draw (0) -- (1-C);
              \draw (1-A) -- (2-A);
              \draw (1-A) -- (2-B);
              \draw (1-B) -- (2-A);
              \draw (1-B) -- (2-C);
              \draw (1-C) -- (2-B);
              \draw (1-C) -- (2-C);
              \draw (2-A) -- (3-A);
              \draw (2-A) -- (3-B);
              \draw (2-B) -- (3-B);
              \draw (2-C) -- (3-B);
              \draw (2-C) -- (3-C);
              \draw (3-A) -- (4-A);
              \draw (3-B) -- (4-A);
              \draw (3-B) -- (4-B);
              \draw (3-B) -- (4-C);
              \draw (3-C) -- (4-C);
              \draw (4-A) -- (5-A);
              \draw (4-A) -- (5-B);
              \draw (4-B) -- (5-A);
              \draw (4-B) -- (5-C);
              \draw (4-C) -- (5-B);
              \draw (4-C) -- (5-C);
              \draw (5-A) -- (6-A);
              \draw (5-A) -- (6-B);
              \draw (5-B) -- (6-B);
              \draw (5-C) -- (6-B);
              \draw (6-A) -- (7-A);
              \draw (6-B) -- (7-A);
              \draw (6-B) -- (7-B);
              \draw (7-A) -- (8);
              \draw (7-B) -- (8);
              \draw (8) -- (9);
              \draw (9) -- (10);
            \end{tikzpicture}

%    \end{minipage}\hfill
    \caption{The Hasse Diagram of the Stable Matching Lattice $(\mathcal{M}, \geqslant)$.}\label{fig:sm-lattice}
    %\Description{This figure contains the Hasse Diagram which corresponds to the Table describing the 23 stable matchings}
\end{figure}

Recall, by the {\em fundamental theorem for finite distributive lattices}, the stable matching lattice $(\mathcal{M}, \geqslant)$
has an auxiliary poset whose order ideals,
 ordered by inclusion, form $\mathcal{M}$. We claimed that this auxiliary poset is the rotation poset $\mathcal{P}=(\mathcal{R}, \geq)$.
By inspection of Figure~\ref{fig:rotation-poset} and Figure~\ref{fig:sm-lattice}, the reader 
may verify that this is indeed the case for this stable matching instance.
In particular, we can see the correspondence between minimal stable matchings in $\mathcal{M}\setminus \{{\bf 0}\}$ and minimal rotations.	
In this case there are three such minimal matchings, namely $\{M_{20},M_{21},M_{22}\}$ and three minimal rotations,
namely, $\{\rho_8,\rho_9,\rho_{10}\}$.

Finally, Theorem~\ref{thm:winning}, tells us that the minimum winning coalition has cardinality equal the number of minimal rotations. 
In this instance, these three rotations consist of the girls $\set{g_1,g_2}$, $\set{g_3,g_5,g_8}$ 
and $\set{g_4,g_6,g_7}$, respectively. A minimum winning coalition must contain exactly one element of each of these groups.
For example, the three girls $\{g_1,g_3,g_4\}$ form a minimum winning coalition and allow us to descend all the way from ${\bf 1}$ to ${\bf 0}$.
If we select only one girl we descend the lattice only as far down as $M_{17}$, $M_{18}$ or $M_{19}$. If
we select two girls we can descend only as far down as $M_{20}$, $M_{21}$, or $M_{22}$. 

\newpage
\section{The Random Matching Model}\label{sec:random-model}

For the rest of the paper we use the {\em random matching model} which was first studied by Wilson~\cite{Wil72} and subsequently
examined in detail by Knuth, Pittel and coauthors~\cite{Knu82,Pit89,KMP90, Pit92}.
Here the preference ranking of each boy and each girl is drawn uniformly and independently from 
the symmetric group~${\bf S_n}$. Specifically, each preference ranking is
a random permutation of the set $[n]=\{1,2,\dots,n\}$.

We may now state the two main results of the paper. First, in the random matching model,
the expected cardinality of the minimum winning coalition is $O(\log n)$.
\begin{theorem}\label{main-thm}
In the random matching model, the expected cardinality of the minimum winning coalition $F$ 
is 
$$\bb{E}(|F|)=\frac{1}{2}\log(n)+O(\log \log n)$$
\end{theorem}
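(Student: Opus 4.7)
The plan is to exploit Corollary~\ref{cor:minimum-size} together with the symmetry of the random matching model. By Corollary~\ref{cor:minimum-size}, $|F|$ equals the number of minimal rotations in $(\mathcal{R},\ge)$, which (as explained in Section~\ref{sec:rotation-graph}) are precisely the ``maximal'' rotations under the dual, girl-side ordering. Since the random matching model is invariant in distribution under swapping the roles of boys and girls, the expected number of minimal rotations equals the expected number of (boy-side) maximal rotations, i.e.\ the expected number of cycles in the functional digraph $H(\mathbf{1})$ on $[n]$. Thus I would reduce the theorem to establishing
$$\mathbb{E}\bigl[\,\text{number of cycles of } H(\mathbf{1})\,\bigr] \;=\; \tfrac{1}{2}\log n + O(\log \log n).$$

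To analyze $H(\mathbf{1})$ I would use the principle of deferred decisions. Run boy-proposal DAA, revealing each boy's preference list entry-by-entry as he proposes and each girl's ranking only insofar as needed to compare the boys who have proposed to her. Pittel's analysis~\cite{Pit89} tells us that, with probability $1-n^{-\omega(1)}$, this run matches every boy to a girl at rank $(1+o(1))\log n$ on his list, and every girl to a boy at rank $(1+o(1))n/\log n$ on hers. After DAA terminates, the out-arc of each boy $b_i$ is found by continuing to scan his list past $g_i=\mu(b_i)$, testing each newly revealed girl $g$ using her still-unrevealed randomness to see whether $b_i\succ_g \mu(g)$. Conditional on the Pittel-typical event, each such test succeeds with probability $(1+o(1))/\log n$, so the expected number of girls scanned before finding an acceptor is $(1+o(1))\log n$, and the identity of the acceptor's partner is close in total variation to a uniform draw from $[n]\setminus\{b_i\}$; moreover the out-arcs of distinct boys are close to independent.

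The final ingredient is the classical random-mapping calculation: for $\phi:[n]\to[n]$ uniform,
$$\mathbb{E}\bigl[\,\text{number of cycles of } \phi\,\bigr] \;=\; \sum_{k=1}^{n}\frac{n!}{(n-k)!\,k\,n^{k}} \;=\; \tfrac{1}{2}\log n + O(1),$$
the $\tfrac{1}{2}$ coming from the fact that the summand is $\approx 1/k$ only up to $k\sim\sqrt{n}$, beyond which Gaussian decay sets in. I would couple $H(\mathbf{1})$ to such a $\phi$ via the deferred-decision description above, absorbing both the coupling slack and the crude $\le n$ bound on the cycle count over the $n^{-\omega(1)}$ bad event into the remainder.

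The hard part will be tightening the coupling enough to produce an $O(\log\log n)$ remainder rather than the naive $o(\log n)$. Dependencies among out-arcs (the girls and their partners are shared across different boys' scans) and tail deviations of the Pittel ranks both perturb the per-arc distribution by a $1+o(1)$ factor, and because cycles of length $k$ contribute $\approx 1/k$ over $k$ up to roughly $\mathrm{polylog}(n)$ in the non-negligible range, these small multiplicative perturbations accumulate into additive corrections. I would therefore truncate the cycle-count sum at some $k^\star = \mathrm{polylog}(n)$, bound the tail $k>k^\star$ by an elementary argument against the functional-digraph tail of $\phi$, and for $k\le k^\star$ unroll the deferred-decision revelations along a putative $k$-cycle, comparing the vertex-by-vertex conditional distributions of its arcs against the uniform model to control their combined error at the $\log\log n$ scale.
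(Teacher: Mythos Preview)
Your reduction to counting cycles of $H(\mathbf{1})$ via the boy/girl symmetry is exactly what the paper does, and the random-mapping heuristic is indeed the right intuition. The gap is in your coupling step. You assert that, conditional on Pittel-typical behaviour, each out-arc is ``close in total variation to a uniform draw from $[n]\setminus\{b_i\}$.'' That is not true to the precision you need. Conditional on the first phase, the out-arc of any boy lands at $j$ with probability proportional to $\tfrac{1}{\rho_j+1}$, where $\rho_j$ is the number of phase-one proposals to $g_j$. Since $\rho_j$ is essentially Poisson with mean $\log n$, the weights $\tfrac{1}{\rho_j+1}$ fluctuate by a multiplicative $1\pm\Theta(1/\sqrt{\log n})$ across girls, so the TV distance of the out-arc law from uniform is $\Theta(1/\sqrt{\log n})$. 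An arc-by-arc coupling with that slack, run over cycle lengths up to order $\sqrt{n}$, does not produce an $O(\log\log n)$ remainder; without further structure it does not even pin down the leading constant~$\tfrac12$.

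The paper never couples to a uniform mapping. It writes $|\mathcal{R}^{\max}|=\sum_i Z_i$ with $Z_i=1/|R|$ if $b_i$ lies on a maximal rotation $R$ (and $0$ otherwise), picks $i_1$ uniformly, and analyses the path-tracing process from $b_{i_1}$ in a second deferred-decision phase. After $t$ phase-two proposals the traced path has length $X_t\approx\rho t$, where $\rho=\tfrac{1}{n}\sum_j\tfrac{1}{\rho_j+1}$, while the chance that the very next proposal closes the cycle at $g_{i_1}$ is $\tfrac{1}{n(\rho_{i_1}+1)}$. Averaging $\tfrac{1}{\rho_{i_1}+1}$ over the uniform choice of $i_1$ produces exactly $\rho$, which cancels the $\rho$ in $X_t\approx\rho t$; this cancellation is precisely what absorbs the non-uniformity you were hoping to couple away. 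The $O(\log\log n)$ then comes solely from truncating the sum over $t$ at $\operatorname{polylog} n$ below and $\sqrt{n}\operatorname{polylog} n$ above. Your route can be salvaged, but only by exploiting that all out-arcs are drawn from the \emph{same} target law $q_j\propto\tfrac{1}{\rho_j+1}$ and computing the cycle count for that non-uniform functional digraph directly (where the same algebraic cancellation reappears), rather than by a TV coupling to uniform.
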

So the minimum winning coalition is small. Surprisingly, in sharp contrast, our second result states that
a random coalition must contain nearly {\bf every} girl if it is to form a winning coalition with high probability.
Equivalently: 
\begin{theorem}\label{main-thm-2}
In the random matching model, $\forall \varepsilon>0,\ \exists \delta(\varepsilon)>0$ such that for a random coalition $F$ of 
cardinality $(1-\varepsilon)\cdot n$ the probability that $F$ is {\bf not} a winning coalition is at least $\delta(\varepsilon)$.
\end{theorem}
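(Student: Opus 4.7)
The plan is to leverage Theorem~\ref{thm:winning}: $F$ fails to be a winning coalition if and only if the girl-set $U_\ell$ of some minimal rotation is entirely contained in $G\setminus F$. Since $G\setminus F$ is a uniformly random $\varepsilon n$-subset of $G$ independent of the preferences, and the sets $U_\ell$ are pairwise disjoint (by the argument in the proof of Theorem~\ref{thm:winning}), for each fixed $\ell$ with $|U_\ell|=s$ bounded we have
\[
\Pr_F[U_\ell\subseteq G\setminus F\mid\text{preferences}]=\binom{n-s}{\varepsilon n-s}\bigg/\binom{n}{\varepsilon n}\to\varepsilon^s
\]
as $n\to\infty$. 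Thus, to lower bound $\Pr[F\text{ not winning}]$, it suffices to prove that with probability bounded away from $0$ over the random preferences there exists a minimal rotation of bounded size.

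To produce such a rotation I plan to exploit the symmetry of the random matching model under swapping the roles of boys and girls. This swap reverses the stable matching lattice $(\mathcal{M},\geqslant)$ and hence the rotation poset $(\mathcal{R},\geq)$, mapping minimal rotations to maximal rotations of equal size. Because the random matching model is invariant under this swap, the distribution of the sizes of minimal rotations agrees with the distribution of the sizes of maximal rotations. Since maximal rotations are exactly the directed cycles of the rotation graph $H({\bf 1})$ at the boy-optimal matching, it suffices to show that with constant probability $H({\bf 1})$ contains a directed $2$-cycle. Let $X$ denote the number of such $2$-cycles. The key tool is Pittel's principle of deferred decisions for the boy-proposal deferred-acceptance algorithm: conditional on the output ${\bf 1}$, the portions of each boy's preference list beyond his matched girl are uniform random permutations of unseen girls, and each such girl prefers a given new boy $b_i$ over her current partner with probability $\Theta(\log n/n)$. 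A direct computation then yields $\Pr[b_i\to b_j]=(1+o(1))/n$, and extending this to two arcs simultaneously gives $\Pr[b_i\to b_j\to b_i]=\Theta(1/n^2)$, so $\bb{E}[X]=\Theta(1)$. A more delicate calculation bounds $\bb{E}[X^2]=O(1)$, so Paley--Zygmund gives $\Pr[X\geq 1]\geq\bb{E}[X]^2/\bb{E}[X^2]\geq p$ for an absolute constant $p>0$.

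Combining, with probability at least $p$ the instance admits a size-$2$ minimal rotation $\rho_\ell$; on this event the probability $F$ entirely misses $U_\ell$ is at least $\tfrac12\varepsilon^2$ for $n$ large, so
\[
\Pr[F\text{ not winning}]\geq \tfrac12\,p\,\varepsilon^2\;=:\;\delta(\varepsilon)>0.
\]
The principal obstacle is the second-moment bound $\bb{E}[X^2]=O(1)$: different $2$-cycles in $H({\bf 1})$ are correlated through the shared randomness of the boy-proposal DA, and controlling their covariance requires applying the deferred-decisions principle simultaneously to the preference lists of up to four boys and four girls. A secondary subtlety is verifying the symmetry claim, which reduces to checking that the involution on the rotation poset induced by reversing one side's preferences maps each rotation to a rotation of the same size.
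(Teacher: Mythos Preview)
Your overall plan matches the paper's: reduce via Theorem~\ref{thm:winning} to showing that some minimal rotation has bounded size with constant probability, invoke the boy/girl symmetry of the random model to replace minimal by maximal rotations, show a $2$-cycle exists in $H({\bf 1})$ with probability $\Omega(1)$, and then compute the probability that a uniformly random $\varepsilon n$-set misses a fixed pair of girls. The final combinatorial step and the symmetry step are essentially identical to the paper's.

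Where you diverge is in the proof that $H({\bf 1})$ contains a $2$-cycle with constant probability. The paper does \emph{not} use a second-moment argument. Instead it runs an explicit second phase of deferred acceptance after ${\bf 1}$ is found: starting from a random boy, it follows outgoing arcs in the rotation graph, and whenever it lands outside the current path it restarts from a fresh random boy. It shows that this exploration lasts $\Theta(n\log n)$ proposals with high probability, and that at each step the probability of closing a $2$-cycle is $\Theta(1/(n\log n))$, by conditioning on a monotone collection of ``good'' states (bounded proposals per boy, bounded proposals per girl, etc.) and coupling to a binomial via Lemma~\ref{lem:Azuma}. This yields $\Pr[Z'=0]\le e^{-c}<1$ directly, with no variance computation.

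Your Paley--Zygmund route is a reasonable alternative in principle, and one simplification you may have missed is that distinct $2$-cycles in $H({\bf 1})$ are automatically vertex-disjoint (each vertex has out-degree at most one), so $\bb{E}[X^2]$ only involves pairs of disjoint $2$-cycles. The genuine difficulty, which you flag, is the conditioning: the claim ``conditional on ${\bf 1}$, the remaining portion of each boy's list is a uniform permutation of unseen girls and each girl accepts a new boy with probability $\Theta(1/\log n)$'' is not literally true---conditioning on the entire run of DA fixes which girls rejected which boys and the girls' relative rankings of all proposers, and these vary nontrivially across boys and girls. Making the first- and second-moment estimates precise therefore requires exactly the kind of ``good state'' control (no boy made too many proposals, no girl received too many, etc.) that the paper sets up, at which point the paper's direct coupling is arguably cleaner than pushing the moment method through. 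Your approach could likely be completed, but the deferred-decisions heuristic as stated is not enough on its own.
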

To prove these results, recall Theorem~\ref{thm:winning} which states that a winning coalition $F$ must intersect
each {\em minimal rotation} in the rotation poset $(\mathcal{R}, \ge)$.
Thus, for Theorem~\ref{main-thm} it suffices to show that the expected number of minimal rotations is $O(\log n)$.
To show Theorem~\ref{main-thm-2} we must lower bound the probability that a randomly chosen coalition of girls
contains at least one girl in each minimal rotation. Our approach is to show the likelihood of a small cardinality minimal rotation is high.
In particular, we prove there is a minimal rotation containing exactly two girls with constant probability.
It immediately follows that a random coalition must contain nearly all the girls if it is to be a winning coalition with high
probability. 

\subsection{Overview of the Proofs}
So our proofs require that we study the set of minimal rotations in the random matching model.
The following two ``tricks'' will be useful in performing our analyses.
First, instead of minimal rotations we may, in fact, study the set $\mathcal{R}^{\max}$ of {\em maximal rotations},
that is the rotations that are exposed at the boy-optimal stable matching ${\bf 1}$.
This is equivalent because Theorem~\ref{thm:inverse} tells us that the inverse lattice $(\mathcal{M}, \leqslant)$
is the stable matching lattice ordered according to the preferences of the girls.
This symmetry implies that the behaviour of minimal rotations is identical to the behaviour
of maximal rotations as the maximal rotations of one lattice are the minimal rotations of the other. But why is the switch to maximal rotations from minimal rotations helpful?
Simply put, as we are using the boy proposal version of the deferred acceptance algorithm, we obtain the
boy-optimal stable matching and, consequently, it is more convenient to reason about the
rotations exposed at ${\bf 1}$, that is the maximal rotations.

Second, it will be convenient to view the deferred acceptance algorithm with random preferences in an alternative manner. 
In particular, instead of generating the preference rankings in 
advance, we may generate them dynamically.  Specifically, when a boy $b$ is selected to make a proposal he asks a girl $g$ chosen 
uniformly at random. If $b$ has already proposed to $g$ then this proposal is immediately rejected; such a 
proposal is termed {\em redundant}. Meanwhile, $g$ maintains a preference ranking only for the boys that 
have proposed to her. Thus if this is the $k$th distinct proposal made to girl $g$ then she assigns to $b$ a rank 
chosen uniformly at random among $\{1,\ldots k\}$. In particular, in the 
deferred acceptance algorithm $g$ 
accepts the proposal with probability $1/k$.  As explained by Knuth et al.~\cite{KMP90}, this process is 
equivalent to randomly generating the preference rankings independently in advance. Furthermore, recall from Theorem~\ref{thm:boy-optimal} 
that the deferred acceptance algorithm will output the boy-optimal stable matching regardless of the order of proposals.
It follows that, for the purposes of analysis, we may assume the algorithm selects the unmatched boy 
with the lowest index to make the next proposal.

So our task now is to investigate the properties of maximal rotations, that is directed cycles in the rotation graph~$H({\bf 1})$. Intuitively,
this relates to the study of directed cycles in {\em random graphs} with out-degrees exactly one. But there is one
major problem. In random graphs the choice of out-neighbour is independent for each vertex. But in the rotation graph~$H({\bf 1})$
this independence is lost. In particular, the arcs in $H({\bf 1})$ share intricate dependencies and specifically depend on who made and 
who received each proposal in obtaining the boy-optimal stable matching ${\bf 1}$. Moreover, a vertex may even have out-degree zero in 
$H({\bf 1})$. Essentially, the remainder of paper is devoted to showing that the myriad of dependencies that arise 
are collectively of small total consequence. It will then follow that the expected number or maximal rotations and the
minimum cardinality of a maximal rotation both behave in a predictable manner, similar to that of directed cycles in random graphs 
with out-degrees exactly one. Namely, the expected number of cycles is close to $\frac{\log n}{2}$ and the existence a cycle of size two with constant probability~\cite{FO90}.

Consequently, to study maximal rotations we must consider $H({\bf 1})$. 
We do this via a two-phase approach. In the {\em first phase} we calculate the boy-optimal stable matching ${\bf 1}$, without loss of generality, 
${\bf 1}=\{ (b_1,g_1), (b_2,g_2),\dots,(b_n, g_n)\}$.
This of course can be found by running the boy-proposal deferred acceptance algorithm. In the {\em second phase},
we calculate the rotation graph $H({\bf 1})$. But, as explained in Section~\ref{sec:rotation-poset} and illustrated in the example
of Section~\ref{sec:ex}, we can find the rotations by running the boy-proposal deferred acceptance algorithm longer.

In fact, to calculate (i) the expected number of maximal rotations and (ii) the probability that there is a maximum rotation
of cardinality $2$, we will not need the entire rotation graph $H({\bf 1})$ only subgraphs of it.
Moreover, the subgraphs we require will be different in each case. Consequently, the second phases required to prove
Theorem~\ref{main-thm} and Theorem~\ref{main-thm-2} will each be slightly different.
These distinct second phases will be described in detail in Section~\ref{sec:minimum-coalitions} and 
Section~\ref{sec:random-coalitions}, respectively. They both, however, share fundamental properties which
will be exploited in shortening the subsequent proofs. 

\subsection{A Technical Tool for Counters}
Before describing the two algorithms, we present a technical lemma that we will use repeated 
in analyzing the deviations that arise in their application. To formalize the lemma, we require the notion of a {\em state}. The state  
of the algorithm at any point is the record of all the (random) choices made so far: the sequence of proposals and 
the preference rankings generated by the girls. Thus we are working in the probability space  $(\Omega, P)$  of all
possible states $\Omega$ of the algorithm and the probabilities of reaching them.

We index the intermediate states of the algorithm by the number of proposals made to reach it. Let $\Omega_t$ denote the 
set of all possible states 
of the procedure after $t$ proposals. Thus $\Omega_t$ can be thought of as a partition of $\Omega$, and the 
partition $\Omega_{t+1}$ refines the partition $\Omega_t$ for any $t$. A random variable $X_t$ is {\em $\Omega_t$-measurable} if $X_t$ 
is determined by the algorithm state after $t$ proposals, that is $X$ is constant on each part of $\Omega_t$.
We say that a sequence $(X_t)_{t \geq 0}$ of random variables is a \emph{counter} if $X_t$ is  
$\Omega_t$-measurable and $X_t - X_{t-1} \in \{0,1\}$. Thus counters count the number of certain events 
occurring over the course of the algorithm. As an example, the number of successful proposals among the first $t$ proposals 
is a counter.

Our main tool is Lemma~\ref{lem:Azuma} below which is used to control large 
deviations of counters. Let $B_{k,p}$ be a random variable which follows a binomial distribution with 
parameters $k$ and $p$. 
We say that a collection of states $\mc{G}$ is {\em monotone} if for 
every state $S \not \in \mc{G}$ we have $S' \not \in \mc{G}$ for every state $S'$ that can be reached 
from $S$. For example, the collection of states in which every girl received at most one proposal is monotone. Let $\set{\mc{S}_t|t\in \bb{N}}$ be the sequence of random variables corresponding to the state of the algorithm at time $t$

\begin{lemma}\label{lem:Azuma}
Let $\mc{G}$ be a monotone collection of states and let $(X_t)_{t \geq 0}$ be a counter.\\
(a) If $\fn{P}{X_{t'+1} - X_{t'} = 1 | \mc{S}_{t'}=S_{t'}} \geq p$
	for every state $S_{t'}  \in \Omega_{t'} \cap  \mc{G}$, for any $t'\in [t,t+k]$, then, for any $\lambda \geq 0$ and any $k \geq 1$,
 \begin{equation*}
	\fn{P} { (X_{t+k} - X_t  \leq \lambda) \wedge (S_{t+k-1} \in \mc{G}) | \mc{S}_t=S_{t}} \ \leq \ \fn{P} {B_{k,p} \leq \lambda}.  \label{e:Azumadown}
\end{equation*}	
(b) If  $\fn{P}{X_{t'+1} - X_{t'} = 1 | \mc{S}_{t'}=S_{t'}} \leq p$ 
for every state $S_{t'}  \in \Omega_{t'} \cap  \mc{G}$, for any $t'\in [t,t+k]$, then, for any $\lambda \geq 0$ and any $k \geq 1$,
\begin{equation*}
 \fn{P} { (X_{t+k} - X_t  \geq  \lambda) \wedge (\mc{S}_{t+k-1} \in \mc{G}) | \mc{S}_t=S_{t}} \ \leq \ \fn{P} {B_{k,p} \geq \lambda}.  \label{e:Azumaup}
\end{equation*}
\end{lemma}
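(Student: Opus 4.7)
The plan is to prove both parts by a straightforward coupling argument with an auxiliary sequence of i.i.d.~Bernoulli$(p)$ random variables, using the monotonicity of $\mathcal{G}$ to propagate the local hypothesis backwards in time.

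First I would set up notation by writing $Y_i := X_{t+i} - X_{t+i-1} \in \{0,1\}$ for $i=1,\dots,k$, so that $X_{t+k} - X_t = \sum_{i=1}^{k} Y_i$. The key structural observation is monotonicity: by the definition of $\mathcal{G}$, if $\mathcal{S}_{t+k-1} \in \mathcal{G}$ then $\mathcal{S}_{t+j} \in \mathcal{G}$ for every $0 \leq j \leq k-1$ as well, since a state outside $\mathcal{G}$ cannot be the ancestor of a state inside $\mathcal{G}$. Consequently, on the event $\{\mathcal{S}_{t+k-1}\in\mathcal{G}\}$ the hypothesis on the conditional probability of $Y_i=1$ is in force at \emph{every} step $i=1,\dots,k$.

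Next I would construct the coupling. Starting from $\mathcal{S}_t=S_t$, I reveal the state of the process step by step; at step $i$, conditional on the history $\mathcal{S}_{t+i-1}$, I draw $Y_i$ from its true conditional distribution and simultaneously draw an auxiliary Bernoulli$(p)$ variable $Z_i$ on the same probability space, independent of everything else given $\mathcal{S}_{t+i-1}$. For part (a), whenever $\mathcal{S}_{t+i-1}\in\mathcal{G}$ I have $\mathbb{P}(Y_i=1 \mid \mathcal{S}_{t+i-1}) \geq p$, so a standard monotone coupling (e.g., using a single uniform $U_i \in [0,1]$ to generate both $Y_i$ and $Z_i$) lets me enforce $Y_i \geq Z_i$ pointwise; when $\mathcal{S}_{t+i-1}\notin\mathcal{G}$ I simply take $Z_i$ independent of $Y_i$. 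By construction the $Z_i$'s are i.i.d.~Bernoulli$(p)$, so $\sum_{i=1}^k Z_i \sim B_{k,p}$. For part (b), I reverse the inequality in the coupling so that $Y_i \leq Z_i$ whenever $\mathcal{S}_{t+i-1}\in\mathcal{G}$.

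Finally I would combine the two ingredients. On the event $\mathcal{E}:=\{\mathcal{S}_{t+k-1}\in\mathcal{G}\}$, the monotonicity step implies $\mathcal{S}_{t+i-1}\in\mathcal{G}$ for every $i$, so the coupling inequalities hold simultaneously, yielding $X_{t+k}-X_t \geq \sum_i Z_i$ in case (a) and $X_{t+k}-X_t \leq \sum_i Z_i$ in case (b). Therefore
\[
\mathbb{P}\bigl((X_{t+k}-X_t \leq \lambda)\wedge \mathcal{E}\mid \mathcal{S}_t=S_t\bigr) \leq \mathbb{P}\Bigl(\textstyle\sum_i Z_i \leq \lambda\Bigr) = \mathbb{P}(B_{k,p}\leq \lambda),
\]
and symmetrically for part (b). The only real obstacle is the coupling bookkeeping — making sure the auxiliary $Z_i$'s remain i.i.d.~Bernoulli$(p)$ under the full joint law despite the state-dependent definition — but this is handled cleanly by the standard device of introducing an independent uniform variable at each step and defining both $Y_i$ and $Z_i$ as deterministic functions of $(\mathcal{S}_{t+i-1}, U_i)$.
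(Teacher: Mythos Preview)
Your coupling argument is correct: the key observation---that monotonicity of $\mathcal{G}$ forces every intermediate state to lie in $\mathcal{G}$ once $\mathcal{S}_{t+k-1}$ does, so the per-step domination $Y_i\geq Z_i$ holds simultaneously on the event $\mathcal{E}$---is exactly what is needed, and the uniform-variable construction makes the $Z_i$ i.i.d.\ Bernoulli$(p)$ regardless of the state.

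The paper takes a different route, proving (a) by a direct induction on $k$: if $S_t\notin\mathcal{G}$ the left side vanishes by monotonicity, and otherwise one conditions on the value of the first increment $X_{t+1}-X_t$, applies the induction hypothesis to the remaining $k-1$ steps, and combines the two cases using the binomial recursion $p\,P(B_{k-1,p}\le\lambda-1)+(1-p)\,P(B_{k-1,p}\le\lambda)=P(B_{k,p}\le\lambda)$ together with $P(X_{t+1}-X_t=1\mid S_t)\geq p$ and the monotonicity $P(B_{k-1,p}\le\lambda-1)\le P(B_{k-1,p}\le\lambda)$. Your coupling is more conceptual---it exhibits the dominating binomial directly on the same probability space---while the paper's induction is slightly more elementary in that it avoids constructing any auxiliary randomness. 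Both approaches dispatch part (b) by the symmetric argument.
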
	
This implies that if we can bound the probability of the counter being incremented tightly enough then the behaviour of the counter will 
be similar to the behaviour of a binomial random variable. Evidently, this will be useful because binomial random variables are much 
simpler to work with.

\begin{proof}  We prove (a) by induction on $k$. 
The base case $k=1$ is immediate. For the induction step, note that if $S_t \not \in \mc{G}$ then the left 
side of (a) is zero. Thus we may assume $S_{t} \in \mc{G}$ and hence $\fn{P}{X_{t+1} - X_{t} = 1 | S_{t}} \geq p$. 
By the induction hypothesis, we then have 
\begin{align*}
\fn{P}{X_{t+k}- X_{t+1}  \leq \lambda  -1 | (X_{t+1}- X_{t} = 1) \wedge \mc{S}_t=S_{t}} &\ \leq\ \fn{P} { B_{k-1,p} \leq \lambda -1}\\
&\mathrm{and} \\
\fn{P}{X_{t+k}- X_{t+1}  \leq \lambda | (X_{t+1}- X_{t} = 0) \wedge \mc{S}_t=S_{t}} &\ \leq\ \fn{P} { B_{k-1,p} \leq \lambda}.
\end{align*}
Since $\fn{P} { B_{k-1,p} \leq \lambda-1}\le  \fn{P} { B_{k-1,p} \leq \lambda}$, combining these three inequalities gives
\begin{align}
\fn{P}{X_{t+k}- X_{t} \leq \lambda | \mc{S}_t=S_{t}}
&\ =\  \fn{P}{X_{t+1} - X_{t} = 1 | \mc{S}_t=S_{t}}\cdot \fn{P} { B_{k-1,p} \leq \lambda -1} \nonumber \\
&\quad\qquad +\  \fn{P}{X_{t+1} - X_{t} = 0 | \mc{S}_t=S_{t}}\cdot \fn{P} { B_{k-1,p} \leq \lambda} \nonumber\\
&\ \leq\  p \fn{P} { B_{k-1,p} \leq \lambda -1} + (1-p )\fn{P} { B_{k-1,p} \leq \lambda} \label{in1}\\
&\ \leq\ \fn{P} { B_{k,p} \leq \lambda} \nonumber
\end{align}
where we obtain~(\ref{in1}) by noting the following:
\begin{align*}
\fn{P}{B_{k-1,p} \leq \lambda -1} &\leq \fn{P}{B_{k-1,p} \leq \lambda}\\
\Longrightarrow  \quad p\cdot \left[\fn{P}{B_{k-1,p} \leq \lambda -1}
- \fn{P}{B_{k-1,p} \leq \lambda}\right]&\leq 0\\
\Longrightarrow  \quad p \fn{P} { B_{k-1,p} \leq \lambda -1} + (1-p )\fn{P} { B_{k-1,p} \leq \lambda}&\leq \fn{P} { B_{k,p} \leq \lambda}.
\end{align*}
The proof of (b) is completely analogous.	
\end{proof}	
	 
In our subsequent analyses we will combine Lemma~\ref{lem:Azuma} with the following well-known Chernoff bounds that control 
deviations of $B_{k,p}$ from the mean.

\begin{lem}\label{lem:Chernoff}	 For $0 \leq \delta \leq 1$, 
\begin{align*}
\fn{P}{ B_{k,p}  \geq  (1 + \delta)pk} \ \leq\ \fn{\exp}{-\frac{\delta^{2}pk}{3}} 
\quad \mathrm{and} \quad
\fn{P}{ B_{k,p} \leq (1- \delta)pk} \ \leq\ \fn{\exp}{-\frac{\delta^{2}pk}{2}}. 
	\end{align*}
\end{lem}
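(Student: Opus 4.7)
The plan is to apply the standard Chernoff (moment generating function) technique. Writing $B_{k,p} = X_1 + \cdots + X_k$ as a sum of independent Bernoulli$(p)$ random variables, for any $t \in \mathbb{R}$ the moment generating function factors as $E[e^{tB_{k,p}}] = (1-p+pe^t)^k$, and by the elementary inequality $1+x \leq e^x$ this is at most $\exp(pk(e^t-1))$. This single bound will drive both parts of the lemma.

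For the upper tail, I would apply Markov's inequality to $e^{tB_{k,p}}$ with $t>0$ to get
$$P(B_{k,p} \geq (1+\delta)pk) \leq \exp\!\left(pk(e^t-1) - t(1+\delta)pk\right),$$
and then minimize the exponent over $t>0$, which is achieved at $t = \log(1+\delta)$. This yields the classical form
$$P(B_{k,p} \geq (1+\delta)pk) \leq \left(\frac{e^\delta}{(1+\delta)^{1+\delta}}\right)^{pk}.$$
To convert this to the clean bound $\exp(-\delta^2 pk / 3)$ stated in the lemma, I would verify the elementary inequality $(1+\delta)\log(1+\delta) - \delta \geq \delta^2/3$ for $\delta \in [0,1]$. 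This follows from the Taylor expansion $(1+\delta)\log(1+\delta) - \delta = \sum_{j \geq 2}\frac{(-1)^j\delta^j}{j(j-1)}$ together with a straightforward monotonicity check on $[0,1]$.

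For the lower tail, I would run the symmetric argument applied to $e^{-tB_{k,p}}$ with $t>0$; after optimizing at $t = -\log(1-\delta)$ one obtains
$$P(B_{k,p} \leq (1-\delta)pk) \leq \left(\frac{e^{-\delta}}{(1-\delta)^{1-\delta}}\right)^{pk},$$
and the clean form $\exp(-\delta^2 pk/2)$ then follows from the calculus inequality $(1-\delta)\log(1-\delta) + \delta \geq \delta^2/2$ on $[0,1]$, which is the slightly tighter bound responsible for the constant $1/2$ rather than $1/3$ in the lower-tail estimate. The only real obstacle is the careful verification of these two elementary calculus inequalities on the full interval $[0,1]$; once they are in hand, the rest of the argument is entirely mechanical and the two bounds follow in parallel.
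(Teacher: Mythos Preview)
Your argument is the standard Chernoff moment-generating-function proof and is correct. The only point I would tighten is the verification of the two calculus inequalities: for the upper tail your Taylor expansion indeed gives $(1+\delta)\log(1+\delta)-\delta = \sum_{j\ge 2}\frac{(-1)^j\delta^j}{j(j-1)}$, and since this is an alternating series with terms decreasing in absolute value on $[0,1]$ it is bounded below by $\tfrac{\delta^2}{2}-\tfrac{\delta^3}{6}\ge \tfrac{\delta^2}{3}$; for the lower tail the function $g(\delta)=(1-\delta)\log(1-\delta)+\delta-\tfrac{\delta^2}{2}$ satisfies $g(0)=g'(0)=0$ and $g''(\delta)=\tfrac{\delta}{1-\delta}\ge 0$, so $g\ge 0$ on $[0,1)$ (with the limit $g(1)=\tfrac12$). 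You should state these explicitly rather than appeal to ``a straightforward monotonicity check''.

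As for comparison with the paper: the paper does not prove this lemma at all. It is stated as a well-known Chernoff bound and invoked without argument. Your proof is the textbook derivation, so there is no substantive difference in approach to discuss.
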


\section{Minimum Winning Coalitions}\label{sec:minimum-coalitions}
In this section, we will evaluate the expected cardinality of the minimum winning coalition.
Recall, it suffices is to find the expected number of directed cycles, $\mathcal{R}^{\max}$, in the rotation graph~$H({\bf 1})$. 
%In order to do so, we will need to find a reliable online algorithm to compute the expected size of the minimum coalitions. 
%We will then need to show that the behaviour of this algorithm is regular enough for the uncertainty to be negligible.
To do this, it will be useful to describe the cardinality of $\mathcal{R}^{\max}$ in a more manipulable form.
Specifically, for any boy $b_i$ define a variable
$$
Z_i =
\begin{cases}
\frac{1}{|R|}& \text{if\ } b_i \text{\ is\ in\ a\ maximal\ rotation\ } R\\
0& \text{if\ } b_i \text{\ is\ not\ in\ a\ maximal\ rotation}
\end{cases}
$$
Then we obtain that:
\begin{equation*}
|\mathcal{R}^{\max}| \ =\ \sum_{R\in \mathcal{R}^{\max}} 1 \ =\ \sum_{R\in \mathcal{R}^{\max}} \sum_{(b,g)\in R} \frac{1}{|R|} \ =\ \sum_{i=1}^{n} Z_i
\end{equation*}
By linearity of expectation, the expected cardinality of the minimum winning coalition $F$ is
\begin{equation}\label{eq:expected-F}
\bb{E}(|F|) \ =\  \bb{E}(|\mathcal{R}^{\max}|) \ =\ \fn{\bb{E}}{\sum_{i=1}^{n} Z_i} \ =\ \sum_{i=1}^{n} \bb{E}(Z_i)
% \ =\ n\cdot \bb{E}(X_1).
\end{equation}
As discussed in Section~\ref{sec:random-model}, the difficulty in computing $\bb{E}(|F|)$
is the myriad of dependencies that arise in the formation of the rotations in $\mathcal{R}^{\max}$.
Equation~\ref{eq:expected-F} is extremely useful in this regard. To quantify the dependency effects, rather than 
count expected rotations directly, it allows to focus simply on computing $\bb{E}(Z_i)$.

\subsection{Generating Maximal Rotations from the Rotation Graph }\label{s:rotation}
Ergo, our task now is to evaluate $\bb{E}(Z_i)$.
For this we study a two-phase randomized algorithm, henceforth referred to as the {\em algorithm}, 
for generating the potential maximal rotation containing a given boy. 
The first phase computes the boy-optimal stable 
matching ${\bf 1}=\{ (b_1,g_1), (b_2,g_2),\dots,(b_n, g_n)\}$.
In the {\em second phase} we use a variation of the deferred acceptance algorithm to 
generate arcs in (a subgraph of) the rotation graph and generate a random 
variable $Z$. 

The second phase starts with a randomly selected boy $i_1$ who makes uniformly random proposals until the first time he proposes 
to a girl $g_j$ who prefers him over her partner $b_j$ in the boy-optimal stable matching.
The boy $b_j$ will make the next sequence of proposals. 
%This continues until one of two stopping conditions is reached.
%One, 
The process terminates if we find a maximal rotation. Moreover, if this rotation is completed 
because girl $g_{i_1}$ receives and accepts a proposal then we have found a maximal rotation containing boy $i_1$.
In this case we also update $Z$. 
%Two, the second-phase will also terminate if $n\log n$ proposals are made in the second phase.
Formally, we initialize the second-phase by:
\begin{itemize}
	\item Choose $i_1$ from $\{1,2,\ldots,n\}$ uniformly at random.
	\item Initialize the potential cycle in the rotation digraph containing $i_1$ by setting $R=[i_1]$.
\end{itemize}	
Once $R=[i_1, \ldots,i_k]$ is found, we generate the arc of the rotation digraph emanating from $i_k$, as follows.
\begin{itemize}
	\item %If less than $n\log n$ proposals have been made in the second phase,
	Let boy $b_{i_k}$ make uniformly random proposals until the first time he proposes to a girl $g_{j}$ such 
	that $g_j$ ranks  $b_{i_k}$ higher than $b_j$. That is, $g_j$ ranks $b_{i_k}$ higher than her pessimal stable partner.
\begin{itemize}	
	\item If  $j \not \in R$ then we set $i_{k+1}=j$, $R=[i_1, \ldots,i_k, i_{k+1}]$, and recurse.
	\item If  $j \in R$ then we terminate the procedure. We set $Z = \frac{1}{|R|}$, if $j=i_1$, and $Z = 0$, otherwise.
\end{itemize}
	\item If, instead, boy $b_{i_k}$ gets rejected by all the girls then the vertex $i_k$ 
	has no-outgoing arcs in the rotation graph. Thus, $b_{i_1}$ 
	belongs to no maximal rotation, so we terminate the procedure and set $Z = 0$.
\end{itemize}

We emphasize that as the second phase runs, we do not change any assigned partnerships.
Specifically, when a girl receives a proposal we always compare her rank for the proposing boy to the rank of 
her pessimal partner, regardless of any other proposals she may have received during the second phase.
Now $Z= Z_{i_1}$ where $i_1$ was chosen uniformly at random. The next lemma is then implied 
by~\eqref{eq:expected-F} as the expectation of $Z$ is the average of the expectations of $Z_i$.

\begin{lemma}\label{lem:model}
$\bb{E}(|F|) \ = n\cdot \bb{E}(Z)$ where $Z$ is the random variable generated by the algorithm. \end{lemma}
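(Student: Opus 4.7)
The plan is to show that the random variable $Z$ produced by the algorithm equals $Z_{i_1}$ in distribution, where $i_1$ is uniform on $\{1,\dots,n\}$, after which the lemma drops out of linearity of expectation combined with~\eqref{eq:expected-F}. The heart of the argument is therefore a definitional verification that phase 2, when initiated at a uniformly random boy $b_{i_1}$ and run in the dynamic proposal model, faithfully walks the rotation graph $H(\mathbf{1})$.

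First I would check that phase 2 traces arcs of $H(\mathbf{1})$ correctly. By the definition in Section~\ref{sec:rotation-graph}, the unique arc out of vertex $i_k$ points to $j$ where $g_j$ is the first girl after $g_{i_k}$ on $b_{i_k}$'s preference list such that $b_{i_k} \succ_{g_j} \mu(g_j)$; and since $\mu = \mathbf{1}$ is girl-pessimal, $\mu(g_j) = b_j$ is precisely $g_j$'s pessimal stable partner. Using the equivalence of the dynamic and fixed-preference models recalled in Section~\ref{sec:random-model}, and conditioning on the state at the end of phase 1, the unrevealed tail of $b_{i_k}$'s preference list (below $g_{i_k}$) is a uniformly random permutation of the girls he has not yet proposed to. Hence the ``uniformly random proposals'' prescribed by phase 2 reveal this tail one girl at a time, and the stopping condition ``$g_j$ ranks $b_{i_k}$ higher than $b_j$'' identifies exactly the out-neighbor of $i_k$ in $H(\mathbf{1})$, or certifies that $i_k$ has out-degree zero.

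Second, since every vertex of $H(\mathbf{1})$ has out-degree at most one, the walk $[i_1,i_2,\dots]$ produced by the algorithm must either reach a vertex with no out-arc, close at some $i_j$ with $j>1$, or close at $i_1$. In the first two cases $i_1$ lies on no cycle of $H(\mathbf{1})$, hence in no maximal rotation, so $Z_{i_1}=0$, matching the algorithm's assignment $Z=0$. In the third case, the walk traces out exactly the unique maximal rotation $R \in \mathcal{R}^{\max}$ containing $i_1$, giving $Z_{i_1} = 1/|R|$, matching the algorithm's assignment $Z=1/|R|$. Thus $Z = Z_{i_1}$ almost surely.

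Finally, since $i_1$ is chosen uniformly and independently of the preferences, linearity yields
\[
\bb{E}(Z) \ =\ \bb{E}(Z_{i_1}) \ =\ \frac{1}{n}\sum_{i=1}^{n} \bb{E}(Z_i),
\]
and combining with~\eqref{eq:expected-F} gives $\bb{E}(|F|) = n\cdot \bb{E}(Z)$. The main obstacle is Step~1: properly justifying that the dynamic proposals launched in phase 2 from the state produced by phase 1 really do sample $b_{i_k}$'s unrevealed preference tail uniformly, so that the stopping rule matches the combinatorial definition of an out-arc in $H(\mathbf{1})$. This reduces to invoking the standard Knuth--Motwani--Pittel-style equivalence of on-the-fly and pre-generated uniform preference lists, conditioned on the information already revealed by phase 1.
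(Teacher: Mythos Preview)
Your proposal is correct and follows essentially the same approach as the paper: verify that the phase-2 walk traces the rotation graph $H(\mathbf{1})$ so that $Z = Z_{i_1}$, then apply linearity of expectation via~\eqref{eq:expected-F}. Your treatment is more thorough than the paper's terse argument---particularly in spelling out the dynamic-model equivalence in Step~1 and the out-degree-one case analysis in Step~2---but the underlying structure is the same.
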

Recall, $b_{i_1}$ is in a maximal rotation if and only if 
the rotation graph of the boy optimal stable matching has a cycle containing $b_{i_1}$.
Observe that every connected component of a directed graph in which each vertex has out-degree 1 contains 
exactly one cycle. Hence, if we find a cycle in the same connected component as $b_i$ but which does not contain him
then $b_{i_1}$ is not in a maximal rotation.
Then, since $|F|=\sum_{j=1}^n Z_{i_j}$, we get $\bb{E}(|F|) = \sum_{j=1}^n \bb{E}(Z_{i_j}) = n\cdot \bb{E}(Z)$.

\subsection{Properties of the Two-Phase Algorithm}
We now present a series of properties that arise with high enough probability during the two-phase process.
In particular, the process does not deviate too far from its expected behaviour. For example, the running time of each phase 
is not much longer than expected, no girl receives too many proposals, and no boy makes too many proposals. 
To formalize this, let $T_1$ and $T_2$ be the number of proposals made in the first and second phases, respectively, and 
let $T = T_1+T_2$. Further, let a {\em run} be a sequence of consecutive proposals made by the same boy in the same phase.
Now consider the following properties that may apply to a state:
\begin{enumerate}[I.]
	\item The algorithm has not terminated.\label{termination}
	\item If the algorithm is in the first phase then $t \leq 5n\log n$. If the algorithm is in the second phase then $T_1\leq 5n\log n$\label{T1}
	\item If the algorithm has not found a rotation yet then  $t \leq T_1 + \sqrt{n}\log^3 n $. \label{T2-minimal}
	\item Each girl has received at most $21\log n$ proposals. \label{girls}
	\item Each boy started at most $21\log n$ runs. \label{boy-runs}
	\item Each run contained at most $111\log^2n$ proposals. \label{runs}
	\item Each boy has made at most $\log^4 n$ proposals. \label{boys}
\end{enumerate}
Informally, we want the states of the algorithm to satisfy these property because they imply that:
\begin{itemize}
	\item No girl receives too many proposals compared to the others girls. Consequently, the event of having already accepted a 
	proposal in the second phase does not impact significantly the probability of accepting another proposal later.
	\item No boy makes too many proposals. Consequently, we do not need to worry about redundant proposals.
	\item The second phase is significantly shorter than the first phase. Consequently, the probability of a proposal being accepted at the start of the second phase and the probability of a proposal being accepted at the end of the second phase are very similar.
\end{itemize} 

Let $\mc{G}$ be the set of all states that satisfy properties \ref{termination} to \ref{boys}. 
We call these {\em good} states. Any state that is not good is {\em bad}.
Clearly, $\mc{G}$ is monotone. Let $G_*$ denote the 
event $\mc{S}_{T-1} \in \mc{G}$, that is, the event that the algorithm is in a good state the period before 
it terminates. Let $\overline{G_*}$ be the complement of $G_*$.

We remark that, for technical reasons, we will assume the second-phase terminates if $n\log n$ proposals are made during that phase. 
This assumption is superfluous here  by conditon~\ref{T2-minimal}, which states the second phase has at most $\sqrt{n}\log^3 n$ proposals.
However, the assumption is useful as it will allow the following lemma to also apply for the modified second-phase
algorithm that we use in Section~\ref{sec:random-coalitions}.

\begin{lemma}\label{l:good} For $n$ sufficiently large, $\fn{P}{G_*} \geq 1 - O(n^{-4})$.
\end{lemma}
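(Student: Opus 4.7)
The plan is a union bound over the complementary failure events for each of the seven properties~\ref{termination}--\ref{boys}. For each, I introduce a counter $X_t$ tracking when the associated bad event occurs, bound the per-step increment probability on states in $\Omega_t\cap\mc{G}$, and then apply Lemma~\ref{lem:Azuma} together with the Chernoff bounds of Lemma~\ref{lem:Chernoff}. Since $\mc{G}$ is monotone, the conditional bound is only required on states that still lie in $\mc{G}$, which dissolves the apparent circularity that each property's analysis uses the others as hypotheses: while the trajectory is still in $\mc{G}$ all seven properties hold, and the first time any one of them fails the state exits $\mc{G}$ and is absorbed into the corresponding failure event.

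Properties~\ref{girls}, \ref{boy-runs}, \ref{runs}, \ref{boys} are the routine ones. Fix a girl $g$: on every good state, a fresh proposal lands on $g$ with probability $1/n$, and by property~\ref{T1} the total number of proposals inside $\mc{G}$ is at most $6n\log n$, so Lemma~\ref{lem:Azuma}(b) with $p=1/n$ and $k=6n\log n$ together with Lemma~\ref{lem:Chernoff} shows that $g$ receives more than $21\log n$ proposals with probability at most $n^{-5}$; union bounding over the $n$ girls yields \ref{girls}. Symmetrically, a run for $b$ begins after $b$ is rejected, and the per-step rejection probability on $\mc{G}$ is bounded above using \ref{girls} (since it is controlled by the maximum number of boys ranked above $b$ anywhere), which gives \ref{boy-runs}. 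Inside $\mc{G}$ a uniformly random proposal by $b_{i_k}$ is accepted with probability $\Omega(1/\log n)$ (again by \ref{girls}, which caps the rank comparison's denominator), so by Lemma~\ref{lem:Azuma}(a) a single run of length $111\log^2 n$ without acceptance has probability $n^{-\omega(1)}$, giving \ref{runs}; then \ref{boys} follows by chaining \ref{boy-runs} with \ref{runs}.

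Property~\ref{T1} is a standard coupon-collector estimate: after $5n\log n$ uniformly random proposals every girl has been hit with probability $1-n^{-\Omega(1)}$, which is enough to terminate phase one. Property~\ref{T2-minimal} is the most delicate. It decomposes into two components: each step of the rotation walk from $i_k$ to $i_{k+1}$ costs at most $\log^2 n$ proposals with overwhelming probability, using the same $\Omega(1/\log n)$ per-proposal acceptance bound as above; and the walk itself revisits an already-used index within $\sqrt{n}\log n$ steps with high probability, via a birthday-type argument relying on the fact that, conditional on $\mc{G}$, the distribution of the newly visited boy $i_{k+1}$ is close to uniform over $\set{1,\ldots,n}$. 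Multiplying the two bounds gives the required $\sqrt{n}\log^3 n$ ceiling on the phase-two length while no rotation has been closed.

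The principal obstacle is the near-uniformity claim in the previous paragraph: one must show that the next visited boy is sufficiently close to uniform over $[n]$ for a birthday bound to bite. This reduces to showing that on good states the identity of the girl $g_{j}$ whose boy-optimal partner becomes $i_{k+1}$ is near-uniform over the unvisited girls, which follows because inside $\mc{G}$ the total number of girls contacted during phase two is $o(n)$, and each contact is by an essentially uniform proposal. Once this is converted into a per-step total-variation bound, the birthday calculation is routine. Combining the seven bounds, each a Chernoff estimate on a counter with polynomially bounded time horizon and each surviving a union bound over at most $O(n\log n)$ objects with individual failure probability at most $n^{-5}$, yields $\fn{P}{\overline{G_*}}=\bigO{n^{-4}}$.
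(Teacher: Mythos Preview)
Your overall architecture---union bound over the ways a state can first exit $\mc{G}$, each handled via a counter and Lemma~\ref{lem:Azuma}---is exactly the paper's. Properties~\ref{T1}, \ref{girls}, \ref{runs}, and \ref{boys} are treated essentially as the paper does (the paper bounds~\ref{girls} by a direct $\binom{k}{s}n^{-s}$ estimate rather than Chernoff, but that is cosmetic). For~\ref{boy-runs} your mechanism is slightly off: a boy $b$ starts a new run not when ``$b$ is rejected'' in some global sense but precisely when \emph{his current partner receives a proposal}, an event of per-step probability exactly $1/n$; that, not~\ref{girls}, is what drives the bound.

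The real issue is your treatment of~\ref{T2-minimal}. You phrase it as a birthday argument requiring that ``the identity of the girl $g_j$ \ldots\ is near-uniform over the \emph{unvisited} girls.'' That is the wrong direction: if the accepting girl were uniform over the unvisited girls you would \emph{never} revisit $R$ and the phase would not terminate. What you need is a \emph{lower bound} on the probability that the accepting girl lies in $R$, and this does not follow from ``each contact is by an essentially uniform proposal'': the proposal is uniform, but which girl \emph{accepts} is biased by the proposal counts $\rho_j$, and a total-variation comparison to uniform is both unjustified here (the bias can be a $\Theta(\log n)$ factor on good states) and unnecessary. The paper's argument is much more direct and avoids near-uniformity entirely: set $\ell=\lceil\tfrac12\sqrt{n}\log^3 n\rceil$; after $\ell$ phase-two proposals without termination, \ref{runs} forces $|R|\ge \ell/(111\log^2 n)$. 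From then on, each proposal lands in $\{g_i:i\in R\}$ with probability at least $|R|/n$ (uniform proposals) and, by~\ref{girls}, is accepted with probability at least $1/(22\log n)$. Hence each of the next $\ell$ proposals terminates the process with probability at least $\ell/(Cn\log^4 n)$ for an absolute $C$, and the probability of surviving all $\ell$ of them is at most $\exp\bigl(-\ell^2/(Cn\log^4 n)\bigr)\le n^{-6}$. No birthday calculus, no distributional comparison: just ``$R$ is already large, and hitting it is easy.'' Replace your final paragraph with this and the proof goes through.
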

\begin{proof} 
It suffices to show the probability is $O(1/n^4)$ of reaching a state $S_k \in \Omega_{k}$ such that (i) the algorithm has 
not yet terminated, (ii) $S_k $ is bad, and (iii) all the states preceding $S_k$ are good. 

Note that, for $n$ sufficiently large, conditions \ref{T1} and our bound on the length of the second phase imply that $k \leq  6n\log n$ for any such state. 
Furthermore, again for sufficiently large $n$, conditions \ref{boy-runs} and \ref{runs} together imply \ref{boys}; thus, \ref{boys} 
cannot be the only condition violated by $S_k$. Hence, it suffices to verify that the probability of reaching such a state violating 
one of the conditions \ref{T1}-\ref{runs} is small.
 
First consider condition \ref{T1}. Recall the first phase terminates when every girl has received a proposal.
So if $S_k$ violates \ref{T1} then $k \geq 5n\log n$ and at least one girl still has not received a proposal. 
By definition, each proposal is directed at girl $g$ with probability $1/n$, for each $g$.
So, by Lemma~\ref{lem:Azuma} applied to the counter $(X_{g,k})_{k \geq 0}$, where $X_{g,k}$ is the number of 
 proposals received by $g$ by the time $k$,
\begin{equation*}
\fn{P}{X_{g,k}=0}  \ \leq\ \left(1-\frac{1}{n}\right)^{k} \ \leq\  e^{-k/n} \ \leq\  \frac{1}{n^5} \, .
\end{equation*}
Thus, by the union bound, $S_k$ violates  \ref{T1} with probability at most $1/n^4$, as desired.
	
Next consider condition \ref{girls}.
If $s=\lceil 21 \log n \rceil$ then the probability that a girl $g$ received at least $s$ proposals is at most
\begin{align*}
\fn{P}{ X_{g,k} \geq s } \ \leq\  \binom{k}{s}\frac{1}{n^s} \ \leq\ \left(\frac{ek}{sn}\right)^s  \ \leq\ \left(\frac{6en\log n}{\lceil 21 \log n \rceil n}\right)^s 
	\ \leq\ \left(\frac{6e}{21}\right)^s\ \le \ n^{21\log(6e/21)} \ \leq\ \frac{1}{n^{5.3}}
\end{align*}
and so, by the union bound,  $S_k$ violates  \ref{girls} with probability at most $1/n^4.$
 
The proof of \ref{boy-runs} is similar.
Take a boy $b$. Apart from his first run (and possibly the first state of second phase), $b$ can only start a run if the girl $g$ he had been  
matched to received a proposal in the previous round. This occurs with probability at most $1/n$ conditioned on the previous 
state. Thus, analogously to the argument above, $S_k$ violates condition~\ref{boy-runs} with probability at most $1/n^4$.

Now consider \ref{runs} and set $s = 111\log^2 n$.
For sufficiently large $n$, the proposal following a good state is non-redundant with probability considerably greater than $21/22$ by \ref{boy-runs}.
Because each girl has received at most  $21\log n$ proposals by \ref{girls}, the probability that a proposal is accepted 
conditioned on the previous good state is at least $\frac{1}{22\log n}$.
By Lemma~\ref{lem:Azuma} applied to the counter $X_t$ equal to the number of proposals accepted by time~$t$,
\begin{align*}
\fn{P}{ X_k-X_{k-s}=0) \wedge (\mc{S}_{k-1} \in \mc{G})  | \mc{S}_{k-s}=S_{k-s}} \ \leq\ \left(1-\frac{1}{22\log n}\right)^s  \ \leq\ \fn{\exp}{-\frac{s}{22\log n}} \ \leq\ \frac{1}{n^{5.04}}
\end{align*}
for any good state $S_{k-s}$. In particular, for any such state $S_{k-s}$, the probability is at most $\frac{1}{5n^5\log n}$, for 
sufficiently large $n$. By the union bound taken over possible choices of $k \leq 5n\log n$, the probability that 
we reach $S_k$ violating \ref{runs} is at most $1/n^4$.

Set $\ell = \lceil \frac{1}{2}\sqrt{n}\log^3n \rceil$.	If $S_k$ violates \ref{T2-minimal} then the second  phase been running 
for at least $2\ell$ steps without finding a cycle, and we have previously reached 
a state $S \in \Omega_{k-\ell}$ such the second phase contained at least $\frac{\ell}{111\log^2n}$ runs before $S$ due to \ref{runs}. Therefore,  
the potential cycle $R$ generated in $S$ may contain at least $\frac{\ell}{111\log^2n}$ elements. In each subsequent step starting in a 
good state, the probability that a non-redundant proposal is made to a girl $g_i$ with $i \in R$ is at least $\frac{\ell}{n\log^3n}$. 
Further,  such a proposal is then accepted, terminating the process, with probability at least $\frac{1}{21\log n}$. However, to 
reach a state $S_k$ the algorithm must continue for at least $\ell$ more steps.
By Lemma~\ref{lem:Azuma}, the probability that this happens starting with any given state $S$ as above is at 
most 
\begin{equation*}
\fn{}{1-\frac{\ell}{21n\log^4n}}^\ell \ \leq\  \fn{\exp}{-\frac{\ell^2}{21n\log^4n}} \ \leq\  \frac{1}{n^6}
\end{equation*}
and thus the probability of reaching $S_k$ violating \ref{T2-minimal} is at most $1/n^4$.
\end{proof}
So, with high probability, we are in a good state the period before the algorithm terminates.
It follows that the magnitude of the expected number of maximal rotations can be evaluated
by consideration of good states.

\subsection{The Expected Cardinality of the Minimum Winning Coalition}
Now, to calculate the expected number of maximal rotations we must analyze in more detail
the second phase of the algorithm.
In particular, this section is devoted to the proof of the following lemma.
\begin{lem}\label{l:main1}
	Let $\mc{S}_*=\mc{S}_{T_1}$ be the terminal state of the first phase. If $\fn{P}{\overline{G_*}|\mc{S}_*=S_*}~\leq~\frac{1}{n^3}$ 
	then $$\fn{\bb{E}}{Z|\mc{S}_*=S_*} = \frac{\log n}{2n} + \fn{O}{\frac{\log\log n}{n}}.$$
\end{lem}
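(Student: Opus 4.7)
The plan is to reduce $\bb{E}(Z\mid \mc{S}_*=S_*)$ to the size-weighted cycle count of a uniformly random functional graph on $[n]$, whose value is $\tfrac{\log n}{2n}+O(\tfrac{\log\log n}{n})$ by elementary asymptotics. The core reduction is to show that, conditional on a good state, the sequence of indices $(i_1,i_2,\ldots)$ generated in phase~2 behaves essentially like an i.i.d.\ uniform sequence on $[n]$ stopped at the first repetition.

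First I would decompose
\[
\bb{E}(Z\mid S_*)=\sum_{k=1}^{n}\tfrac{1}{k}\,P\!\bigl(|R|=k,\ i_1\in R\mid S_*\bigr),
\]
noting that the contribution from $\overline{G_*}$ is at most $P(\overline{G_*}\mid S_*)\leq n^{-3}$ and absorbs into the error term. The central quantitative step is to show that at each round~$\ell$ of phase~2, conditional on a good history, the index $J_\ell$ of the first girl to accept $b_{i_\ell}$'s proposals satisfies $P(J_\ell=j^*\mid\text{history})=(1+\eps_\ell)/n$ for each $j^*\in[n]$, with $\eps_\ell$ small enough that compounding over the $O(\sqrt{n}\log^3 n)$ rounds guaranteed by~\ref{T2-minimal} leaves the accumulated error $o(\log\log n)$.

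For the near-uniformity, use the dynamic-rank view: a new proposal from $b_{i_\ell}$ to $g_j$ is accepted iff $b_{i_\ell}$'s uniformly inserted rank lands above her pessimal partner $b_j$, an event of conditional probability $1/(m_j+1)$ where $m_j$ is the current number of distinct proposers to $g_j$. Condition~\ref{girls} bounds $m_j\leq 21\log n$, and condition~\ref{boys} ensures $b_{i_\ell}$ has addressed only $O(\log^4 n)$ girls so far, so redundant proposals distort the distribution negligibly. A standard calculation shows that the identity of the first accepting girl is distributed proportionally to $1/(m_j+1)$ over the remaining girls, whence approximate uniformity reduces to concentration of the $m_j$'s around their common value $\Theta(\log n)$ (as in Pittel's analysis), obtained via Lemma~\ref{lem:Azuma} applied both to the phase-1 evolution of the $m_j$'s and to their slow drift during phase~2.

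Granted near-uniformity, chaining across rounds gives $P(|R|=k,\ i_1\in R\mid S_*)=(1+o(1))\cdot\tfrac{1}{n}\prod_{\ell=1}^{k-1}(1-\ell/n)$. Using $\prod_{\ell=1}^{k-1}(1-\ell/n)=\exp(-k^2/(2n)+O(k^3/n^2))$ and splitting at $k=\sqrt{n\log n}$, the tail decays as a Gaussian and is negligible, while
\[
\sum_{k=1}^{\sqrt{n\log n}}\tfrac{1}{k}\exp(-k^2/(2n))=\tfrac12\log n+O(\log\log n)
\]
via $\sum_{k\leq K}1/k=\log K+O(1)$ and the bound $\sum_k(1-e^{-k^2/(2n)})/k=O(\log\log n)$, yielding the stated asymptotic. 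The main obstacle is the quantitative near-uniformity: the per-step error $\eps_\ell$ must be small enough that after compounding over $\Theta(\sqrt{n}\log^3 n)$ rounds it still fits inside the $O(\log\log n/n)$ budget. This demands simultaneous control of both the dispersion of the $m_j$'s left over from phase~1 and their growth during phase~2 due to rejected proposals, through Lemma~\ref{lem:Azuma} and conditions~\ref{girls}--\ref{boys}; once assembled, the final asymptotic evaluation is routine.
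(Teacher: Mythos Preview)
Your reduction to the uniform functional-graph model has a genuine gap. The lemma is stated \emph{conditionally on a fixed terminal state} $S_*$ of phase~1; once $S_*$ is given, the proposal counts $m_j=\rho_j$ are deterministic. The hypothesis $P(\overline{G_*}\mid S_*)\le n^{-3}$ forces $S_*$ to be good, but the good-state conditions only give the upper bound $\rho_j\le 21\log n$ (condition~\ref{girls}); they impose no lower bound and no concentration. Your line ``concentration of the $m_j$'s \ldots\ obtained via Lemma~\ref{lem:Azuma} applied to the phase-1 evolution'' therefore has nothing to bite on: there is no remaining phase-1 randomness, and states $S_*$ satisfying the hypothesis can have $\rho_j$'s spread over the whole range $[1,21\log n]$. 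For such $S_*$, the first accepting index $J_\ell$ is distributed proportionally to $1/(\rho_j+1)$ and is \emph{not} near-uniform, so the identity $P(J_\ell=j^*\mid\text{history})=(1+\eps_\ell)/n$ with a single small $\eps_\ell$ fails. Even if you strengthened the notion of ``good'' to include concentration, the best per-index relative error you could hope for is $O(1/\sqrt{\log n})$ (Poisson fluctuations of $\rho_j$ around $\log n$), and your argument gives no mechanism for preventing this from blowing up when compounded over $\Theta(\sqrt n)$ rounds.

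The paper does not attempt near-uniformity at all. Instead it keeps the non-uniform acceptance weights $1/(\rho_j+1)$ explicit and introduces $\rho=\tfrac1n\sum_j\tfrac{1}{\rho_j+1}$. It works in \emph{proposal time} $t$ rather than round number: it shows (Lemma~\ref{lem:acceptance_prob}) that $|R_t|$ grows at rate $\rho$, and that for each fixed $i_1=i$ the per-proposal chance of closing the cycle at $g_i$ is $\approx \tfrac{1}{n(\rho_i+1)}$. Summing $\tfrac{1}{|R_t|}\cdot\tfrac{1}{n(\rho_i+1)}\approx\tfrac{1}{n\rho(\rho_i+1)t}$ over the relevant range of $t$ gives
\[
\bb{E}\!\bigl(Z\,\big|\,S_*,\,i_1=i\bigr)=\frac{1}{n\rho(\rho_i+1)}\Bigl(\tfrac12\log n+O(\log\log n)\Bigr),
\]
and only \emph{then} does one average over the uniform choice of $i_1$. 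The point is the algebraic identity $\tfrac1n\sum_i\tfrac{1}{\rho(\rho_i+1)}=1$, which makes the non-uniformity cancel \emph{exactly}; no concentration of the $\rho_j$'s is needed. If you want to rescue your approach, you would have to carry the weights $1/(\rho_j+1)$ through the whole calculation and exhibit this same cancellation---at which point you are essentially reproducing the paper's argument.
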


Before embarking on the proof of Lemma~\ref{l:main1}, we remark that that our first main result, Theorem~\ref{main-thm}, 
readily follows from it via Lemmas~\ref{lem:model} and~\ref{l:good}. 
It is also worth noting that \ref{T2-minimal} implies that the second phase has at most $\sqrt{n}\log^3 n$ proposals when $G_*$ occurs, due to the fact that we 
stop once we find our first cycle.
\begin{proof}[Proof of Theorem~\ref{main-thm} (modulo Lemma~\ref{l:main1})]
	Let $\mc{G}_{**}$ denote the set of the terminal states $S_*$ of the first phase of the algorithm 
	satisfying $\fn{P}{\overline{G_*}|\mc{S}_*=S_*} \leq \frac{1}{n^3}$. Then $\fn{P}{\mc{S}_* \not \in \mc{G}_{**}} = O(1/n)$ by Lemma~\ref{l:good}. 
	Since $0\le Z\le 1$, by Lemmas~\ref{lem:model} and~\ref{l:main1} we have
	\begin{align*}
	\bb{E}(|F|) 
		=&\  n \bb{E}(Z) \\
	=&\ \ n \bb{E}(Z | \mc{S}_* \in \mc{G}_{**}) (1-\fn{P}{\mc{S}_* \not \in \mc{G}_{**}}) + n\bb{E}(Z | \mc{S}_* \not \in \mc{G}_{**})\fn{P}{\mc{S}_* \not \in \mc{G}_{**}} \\ 
	=&\ \ n \bb{E}(Z | \mc{S}_* \in \mc{G}_{**})+  n\fn{P}{\mc{S}_* \not \in \mc{G}_{**}} \left(\bb{E}(Z | \mc{S}_* \not \in \mc{G}_{**}) - \bb{E}(Z | \mc{S}_* \in \mc{G}_{**})\right) \\ 
	=&\ \ \frac{1}{2}{\log n} + \fn{O}{\log\log n} + O(1) \qedhere
	\end{align*}
\end{proof}	
So let's prove Lemma~\ref{l:main1}. For the remainder of the section we fix $\mc{S}_*=S_*$ satisfying the conditions of 
the lemma.  Let $\rho_i$ be the number of non-redundant proposals received by girl $g_i$ in $S_*$. Set 
$\rho = \frac{1}{n}\sum_{i=1}^{n}\frac{1}{\rho_i+1}$.
As $S_*$ is good, we have $\rho_i \leq 21\log n$ for every girl $g_i$; so, $\rho \geq \frac{1}{22\log n}$.
We evaluate $\fn{\bb{E}}{Z}$ separately for every choice of initial vertex $i_1$ of $R$ in the following lemma:

\begin{lem}\label{l:main2} For every $1 \leq i \leq n$ we have
	$$\fn{\bb{E}}{Z|\mc{S}_*=S_* \wedge (i_1=i)} = \frac{1}{n\rho(\rho_{i}+1)}\left(\frac{1}{2} \log n + \fn{O}{\log\log n}\right).$$
\end{lem}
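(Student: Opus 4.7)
The plan is to analyze the walk $v_1 = i, v_2, v_3, \ldots$ generated by the second phase of the algorithm and show that it is well-approximated by an i.i.d.\ sequence with distribution $\pi_j := \frac{1}{n\rho(\rho_j+1)}$ on $\set{1,\ldots,n}$. Since $Z = 1/|R|$ exactly when the first revisited vertex coincides with $i$,
\begin{equation*}
\fn{\bb{E}}{Z \mid \mc{S}_*=S_* \wedge i_1 = i} \;=\; \sum_{k \geq 1} \frac{1}{k}\,\fn{P}{\text{the walk returns to } i \text{ for the first time at step } k+1}.
\end{equation*}
Throughout, I will condition on $G_*$; by the hypothesis $\fn{P}{\overline{G_*}|\mc{S}_*=S_*}\leq n^{-3}$, the complement contributes at most $\bigO{n^{-3}}$ and is negligible against the target precision.

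The first step is to pin down the per-step transition law. Under the dynamic preference model, when a non-redundant phase-$2$ proposal arrives at $g_j$ whose current proposer list has length $\rho_j + s_j$ with the pessimal stable partner $b_j$ at position $p_j$, the new proposer is ranked above $b_j$ with probability $\frac{p_j}{\rho_j + s_j + 1}$. Condition \ref{T2-minimal} bounds the total number of phase-$2$ proposals by $\sqrt{n}\log^3 n$; combined with Lemma~\ref{lem:Azuma} applied to the counter that tracks the number of phase-$2$ proposals received by each girl, this yields $s_j = \smallo{\rho_j + 1}$ and $p_j = 1 + \smallo{1}$ uniformly over all $g_j$ with overwhelming probability, while condition~\ref{boys} keeps the effect of redundant proposals negligible. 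Consequently the per-proposal acceptance probability at $g_j$ is $(1+\smallo{1})/(n(\rho_j+1))$, the overall per-proposal acceptance probability is $(1+\smallo{1})\rho$, and, conditional on acceptance, the walk moves to $j$ with probability $(1+\smallo{1})\pi_j$ essentially independently of the history.

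Next, let $P_k$ denote the probability that an i.i.d.\ sample $v_2,\ldots,v_k$ drawn from $\pi$ consists of distinct elements none of which equals $i$. Using $\fn{\bb{E}}{\pi_{v_\ell}} = \bar{\pi} := \sum_j \pi_j^2$, a standard birthday-style calculation gives
\begin{equation*}
P_k \;=\; \fn{\exp}{-(k-1)\pi_i - \tbinom{k-1}{2}\bar\pi}\,(1 + \smallo{1}).
\end{equation*}
The good-state bound $\rho_j \leq 21\log n$ forces $\rho \geq \frac{1}{22\log n}$ and hence $\frac{1}{n} \leq \bar\pi \leq \max_j \pi_j \leq \frac{22\log n}{n}$; the walk therefore typically survives for $k^\ast = \fn{\Theta}{1/\sqrt{\bar\pi}}$ steps, which lies between $\sqrt{n/\log n}$ and $\sqrt{n}$. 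Combining the transition estimate with this survival estimate gives $\fn{P}{\text{cycle of length } k \text{ through } i} = (1+\smallo{1})\,\pi_i P_k$ for all $k \leq k^\ast$, and the contribution from $k > k^\ast$ is negligible by condition~\ref{T2-minimal}.

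Finally, evaluating the resulting sum,
\begin{equation*}
\sum_{k \geq 1} \frac{P_k}{k} \;=\; \int_{1}^{\infty} \frac{e^{-k^2\bar\pi/2}}{k}\,dk + \bigO{1} \;=\; \frac{1}{2}\fn{\log}{1/\bar\pi} + \bigO{1} \;=\; \frac{\log n}{2} + \fn{O}{\log\log n},
\end{equation*}
where the last equality uses the two-sided bound on $\bar\pi$. Multiplying through by $\pi_i = \frac{1}{n\rho(\rho_i+1)}$ yields the stated asymptotic. The main obstacle I expect is rigorously converting the heuristic i.i.d.\ approximation of the walk into error bounds tight enough to preserve the $\fn{O}{\log\log n}$ correction: the accumulated effects of redundant proposals, of phase-$2$ insertions that shift $b_j$'s position, and of the weak dependence between consecutive runs must all be controlled uniformly across $k$ up to the birthday cutoff $k^\ast$, where $P_k$ is already substantially smaller than~$1$.
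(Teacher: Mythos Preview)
Your approach is structurally different from the paper's: you track the walk $v_1,v_2,\ldots$ at the level of accepted proposals and use a birthday-type estimate for the survival probability $P_k$, whereas the paper indexes everything by \emph{proposal time} $t$, shows the counter $X_t=|R_t|$ concentrates around $\rho t$ via Lemma~\ref{lem:Azuma}, and then sums $\frac{1}{n(\rho_i+1)}\cdot\frac{1}{X_t}$ over $t$ between polylogarithmic and $\sqrt{n}$-scale cutoffs. Under the change of variables $k\approx\rho t$, your $\pi_i\sum_k P_k/k\approx \pi_i\cdot\frac12\log(1/\bar\pi)$ and the paper's $\frac{1}{n(\rho_i+1)}\sum_t \frac{1}{\rho t}\approx \frac{1}{n\rho(\rho_i+1)}\cdot\frac12\log n$ are the same computation, so the two routes converge.

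There is, however, a real inaccuracy in your i.i.d.\ reduction. The claim ``$p_j=1+o(1)$ and $s_j=o(\rho_j+1)$ uniformly over all $g_j$'' is false. Once $g_{v_m}$ has been added to $R$ (for $m\ge 2$), exactly one phase-2 proposer has already been ranked above $b_{v_m}$, so $p_{v_m}=2$, not $1$. Consequently the per-step probability of the walk hitting $v_m\in R\setminus\{i\}$ is about $\frac{2}{n\rho(\rho_{v_m}+2)}$, which lies between $\pi_{v_m}$ and $2\pi_{v_m}$ rather than being $(1+o(1))\pi_{v_m}$. Your survival probability is therefore $\exp\bigl(-c\binom{k-1}{2}\bar\pi\bigr)$ for some $c\in[1,2]$ rather than the formula you wrote. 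This only shifts $\frac12\log(1/\bar\pi)$ by $O(1)$, so the target estimate still holds, but the ``$(1+o(1))$'' statement is wrong and a rigorous version of your argument must carry these bounded factors through the birthday calculation. The paper's proposal-time parametrization sidesteps this entirely: termination by hitting $R\setminus\{i\}$ contributes $Z=0$ and is simply absorbed into the upper bound on $T_2$ from condition~\ref{T2-minimal}, so only the aggregate increment rate $\rho$ and the single acceptance probability $\frac{1}{\rho_i+1}$ at $g_i$ (where indeed $p_i=1$ throughout, and Lemma~\ref{lem:j} ensures no extra proposals reach her) ever need to be controlled precisely.
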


Since $\frac{1}{n}\sum_{i=1}^n \frac{1}{n\rho(\rho_i+1)}=\frac{1}{n}\frac{1}{n\rho}\sum_{i=1}^n \frac{1}{\rho_i+1}=\frac{\rho}{n\rho}=\frac{1}{n}$, this 
lemma implies Lemma~\ref{l:main1}. To prove Lemma~\ref{l:main2}, we may assume that we reached the state $S_*$, 
chose $i_1=i$, and that the probabilities of  the subsequent events are scaled accordingly. Note that, by Lemma~\ref{l:good}, we 
have $P(G_*) \geq 1-1/n^2$. We relabel the  states of our process $(\mc{S}_0=S_*, \mc{S}_1,\ldots,\mc{S}_{t},\ldots)$, so that $\mc{S}_t$ is the 
state attained after $t$ proposals have been made in the second phase.
Let $R_t$ denote the (random) set $R$ generated in the state $\mc{S}_t$. Let $X_t =|R_t|$ be the associated counter. First we 
show that any proposal made after a good state increases $X_t$ with probability close to $\rho$.

\begin{lemma}\label{lem:acceptance_prob} 
For any good state $S_{t}$, we have: 
$$\fn{P}{X_{t+1} - X_{t}=1 | \mc{S}_t=S_{t}} \in \left[\rho - n^{-1/3}, \rho\right].$$
\end{lemma}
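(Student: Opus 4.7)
The plan is to compute $P := \fn{P}{X_{t+1}-X_t=1 \mid \mc{S}_t=S_t}$ exactly as a sum over girls and compare with $\rho$. Conditioning on the (uniformly chosen) recipient $g_j$ of the proposal at time $t+1$, and noting that $X$ increases only when the proposal is non-redundant, accepted, and aimed at some $j \notin R_t$ (an ineligible $j$ either triggers termination upon acceptance or is rejected as redundant, in either case leaving $X$ unchanged), we get
\begin{equation*}
	P \; = \; \frac{1}{n}\sum_{j \text{ eligible}} q_j,
\end{equation*}
where $j$ is \emph{eligible} if $j \notin R_t$ and $b_{i_k}$ has not previously proposed to $g_j$, and $q_j$ is the conditional probability, given $\mc{S}_t = S_t$, that $g_j$ ranks $b_{i_k}$ above her pessimal partner $b_j$.

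To evaluate $q_j$ I would pass to the standard deferred-revelation coupling that attaches to each pair $(b,g)$ an iid $U_{b,g} \sim \mathrm{Unif}[0,1]$, with $g$ ranking boys in increasing order of $U_{b,\cdot}$. Conditional on the first phase alone, $b_j$ being $g_j$'s top among her $\rho_j$ proposers forces $U_{b_j,g_j} \sim \mathrm{Beta}(1,\rho_j)$ while $U_{b_{i_k},g_j}$ remains independent uniform, giving $q_j = \fn{\bb{E}}{U_{b_j,g_j}} = 1/(\rho_j+1)$. Each subsequent second-phase proposal to $g_j$ reveals whether the proposer's $U$-value lies below or above $U_{b_j,g_j}$, updating its posterior to $\mathrm{Beta}(1+k_{1,j},\,\rho_j+k_{2,j})$, where $k_{1,j}$ and $k_{2,j}$ respectively count the ``above $b_j$'' and ``below $b_j$'' second-phase proposals received by $g_j$. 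In general $q_j = (1+k_{1,j})/(\rho_j+k_j+1)$ with $k_j = k_{1,j}+k_{2,j}$.

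The key observation that unlocks the bound is that $k_{1,j}=0$ for every eligible $j$: an ``above $b_j$'' second-phase proposal is by definition an acceptance, and by construction every acceptance either appends the recipient's index to $R$ or terminates the procedure; since the procedure has not terminated and $j \notin R_t$, no such acceptance can have been directed at $g_j$. Hence $q_j = 1/(\rho_j+k_j+1)$ on eligible $j$, and the upper bound $P \leq \rho$ is immediate because $\tfrac{1}{\rho_j+k_j+1}\leq \tfrac{1}{\rho_j+1}$ and ineligible terms contribute nothing. For the lower bound I would write
\begin{equation*}
	\rho - P \; = \; \frac{1}{n}\sum_{j \text{ ineligible}}\frac{1}{\rho_j+1} \; + \; \frac{1}{n}\sum_{j \text{ eligible}}\frac{k_j}{(\rho_j+1)(\rho_j+k_j+1)},
\end{equation*}
and bound the two pieces with the good-state conditions \ref{T2-minimal} and \ref{boys}: the number of ineligible indices is at most $|R_t|+\log^4 n$, and $|R_t| \leq T_2+1 \leq \sqrt{n}\log^3 n+1$; meanwhile $\sum_j k_j \leq T_2 \leq \sqrt{n}\log^3 n$ bounds the second sum trivially by $T_2/n$. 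Both pieces are thus $O(\log^3 n/\sqrt{n}) = o(n^{-1/3})$, giving $P \geq \rho - n^{-1/3}$ for $n$ large. The main conceptual hurdle is the ``absorption'' observation that forces $k_{1,j}=0$ on eligible indices; once that is in place the remaining estimates are routine bookkeeping with the good-state inequalities.
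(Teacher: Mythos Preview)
Your argument is correct and follows essentially the same route as the paper's proof: write the probability as $\tfrac{1}{n}$ times a sum of per-girl acceptance probabilities over $j\notin R_t\cup B$, observe that this sum is at most $\rho$, and for the lower bound control the defect using $|R_t|\le t+1\le \sqrt{n}\log^3 n+1$ and $|B|\le \log^4 n$.

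The one place you go beyond the paper is worth noting. The paper simply asserts that the conditional acceptance probability at $g_j$ equals $\tfrac{1}{\rho_{j,t}+1}$ where $\rho_{j,t}$ counts all non-redundant proposals received so far. You instead derive the general posterior formula $q_j=(1+k_{1,j})/(\rho_j+k_j+1)$ via the Beta coupling and then supply the structural observation that $k_{1,j}=0$ for every eligible $j$ (any second-phase acceptance at $g_j$ would have forced $j\in R_t$ or termination). This is exactly what is needed to make the paper's asserted formula rigorous, so your extra step fills a small gap rather than constituting a different method. Your direct decomposition of $\rho-P$ into the ``ineligible'' and ``$k_j$-drift'' pieces is equivalent to the paper's recursive bound $\rho(t'+1)\ge \rho(t')-1/n$; both arrive at $\rho-P=O(\log^3 n/\sqrt{n})=o(n^{-1/3})$. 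One cosmetic point: where you write $|R_t|\le T_2+1$ and $\sum_j k_j\le T_2$, you really mean the current second-phase clock $t$ rather than the terminal time $T_2$, but the bound $t\le \sqrt{n}\log^3 n$ from condition~\ref{T2-minimal} is what you actually use, so the argument stands.
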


\begin{proof}
	Let $\rho_{i,t'}$ be the number of non-redundant proposals received by girl $g_i$ in a state $S_{t'}$ 
	preceding $S_t$, and set $$\rho(t') = \frac{1}{n}\sum_{i \not \in R_{t'}}\frac{1}{\rho_{i,t'}+1}.$$ 
	Then $\rho(0) \geq \rho-1/n$ and $\rho(t'+1) \geq \rho(t')-1/n$, for every $0 \leq t' \leq t-1$, as $R_{t'}$ increases by at most 
	one vertex in any step.
	
	Let $b$ be the boy making the proposal following $S_{t}$, and let $B$ be the set of girls $b$ has already 
	proposed to. Then $|B| \leq \log^4 n$, as $S_{t}$ is good. The probability that the next proposal is accepted 
	by a girl not in $R_{t}$, thus increasing $X_t$, is then
	$$\frac{1}{n}\sum_{i \not \in R_{t} \cup B} \frac{1}{\rho_{i,t}+1} \leq \rho,$$
	immediately implying the upper bound.
	On the other hand, this probability is  lower bounded by 
	\begin{align*}
	\rho(t) - \frac{|B|}{n} \ \geq\ \rho - \frac{t+1}{n}-\frac{\log^4 n}{n} \ \geq\ \rho - n^{-1/3}
	\end{align*}
	where the last bound holds as $S_{t}$ is good, and so $t \leq \sqrt{n}\log^3 n$. 
\end{proof}

\begin{lemma}\label{lem:bounds}
\begin{enumerate}
\item []
\item For $t \geq \log^5 n, \quad \fn{P}{ \left(|X_t - \rho t| \geq  \frac{\rho t}{\log n} \right) \wedge (\mc{S}_{t-1} \in \mc{G})} \leq \frac{1}{n^2}$. 
\item For $t \geq \frac{400\log\log n}{\rho}, \quad \fn{P}{ \left( X_t \leq \frac{1}{2}\rho t\right) \wedge (\mc{S}_{t-1} \in \mc{G})} \leq \frac{1}{2\log^5 n}$.
\end{enumerate}
\end{lemma}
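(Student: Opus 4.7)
The plan is to feed the per-step increment bounds from Lemma~\ref{lem:acceptance_prob} into Lemma~\ref{lem:Azuma} and then finish with Lemma~\ref{lem:Chernoff}. Since Lemma~\ref{lem:acceptance_prob} sandwiches $\fn{P}{X_{t+1}-X_t=1 \mid \mc{S}_t=S_t}$ between $\rho - n^{-1/3}$ and $\rho$ on every good state $S_t$, applying Lemma~\ref{lem:Azuma}(a) with $p = \rho - n^{-1/3}$ and Lemma~\ref{lem:Azuma}(b) with $p = \rho$, both starting at time $0$, yields the stochastic-domination inequalities
\[
\fn{P}{(X_t \le \lambda) \wedge (\mc{S}_{t-1} \in \mc{G})} \le \fn{P}{B_{t,\rho-n^{-1/3}} \le \lambda}
\quad\text{and}\quad
\fn{P}{(X_t \ge \lambda) \wedge (\mc{S}_{t-1} \in \mc{G})} \le \fn{P}{B_{t,\rho} \ge \lambda}.
\]
The initial value $X_0 = 1$ produces only a harmless $\pm 1$ correction, which I absorb into the lower-order terms, so the task reduces to controlling two binomial tails in each part.

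For part~(1), I would take $\lambda = \rho t(1 \pm 1/\log n)$ and apply Lemma~\ref{lem:Chernoff} with $\delta = 1/\log n$ on the upper side, and with a slightly larger $\delta$ on the lower side in order to absorb the $n^{-1/3}$ shift between the mean $(\rho - n^{-1/3})t$ of the dominating binomial and the target value $\rho t$. Since $\rho \ge 1/(22\log n)$, we have $n^{-1/3} = o(\rho/\log n)$, so this shift is a lower-order perturbation of the deviation and does not hurt the exponent. Both Chernoff exponents then evaluate to $\Omega(\rho t / \log^2 n)$; because $\rho t \ge \log^4 n / 22$ whenever $t \ge \log^5 n$, each tail is at most $\exp(-\Omega(\log^2 n))$, comfortably below $1/(2n^2)$, and a union bound over the two tails gives the claimed $1/n^2$.

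For part~(2), I would take $\lambda = \rho t/2$. Because $n^{-1/3}$ is negligible compared with $\rho$, this $\lambda$ lies a multiplicative factor of essentially $1/2$ below the mean $(\rho - n^{-1/3})t$ of the dominating binomial; concretely, one may use some fixed $\delta \ge 1/3$ in Lemma~\ref{lem:Chernoff} for all sufficiently large $n$. The resulting bound is $\exp(-c\rho t)$ for an absolute constant $c > 0$, and substituting $t \ge 400\log\log n/\rho$ yields $\exp(-400c\log\log n) = (\log n)^{-400c}$, which is comfortably below $1/(2\log^5 n)$.

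The main (and really the only) obstacle is the routine bookkeeping needed to verify that the additive $n^{-1/3}$ slack in Lemma~\ref{lem:acceptance_prob} does not interfere with the multiplicative deviation parameter used in each Chernoff bound. This is immediate from $\rho \ge 1/(22\log n)$, which makes $n^{-1/3}$ super-polynomially smaller than any polylog quantity, so no further difficulty arises.
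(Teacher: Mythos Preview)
Your proposal is correct and follows essentially the same route as the paper: feed the per-step bounds of Lemma~\ref{lem:acceptance_prob} into Lemma~\ref{lem:Azuma} (with $p=\rho-n^{-1/3}$ for the lower tail and $p=\rho$ for the upper tail) and finish with Lemma~\ref{lem:Chernoff}, absorbing the $n^{-1/3}$ shift via $\rho\ge 1/(22\log n)$. The paper handles the shift by the explicit inequality $(1-\delta)\rho t \le (1-\delta/2)(\rho-n^{-1/3})t$ and then plugs in $\delta=\log^{-1}n$ and $\delta=1/2$ respectively, but this is exactly the bookkeeping you describe.
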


\begin{proof} 
Let $\delta \geq \log^{-1}n$. Recall that $\rho \geq \frac{1}{22}\log^{-1}n$, and so 
\begin{align*} \rho -  n^{-1/3} \ \geq\ \left(1 - \frac{\delta}{2}\right)\rho \ \geq \ \frac{1}{25}\log^{-1}n.
\end{align*} 
Combining  Lemma~\ref{lem:Chernoff} with Lemma~\ref{lem:Azuma} (where $t=0$ and $k=t$) and Lemma~\ref{lem:acceptance_prob} gives
\begin{align}
\fn{P}{ X_t \leq (1 -\delta) \rho t \wedge (\mc{S}_{t-1} \in \mc{G})} 
  \leq &\ \	\fn{P}{ B_{t,\rho - n^{-1/3}} \leq  (1 -\delta ) \rho t } \nonumber\\
  \leq &\ \	\fn{P}{ B_{t,\rho - n^{-1/3}} \leq  \left(1 -\frac{\delta}{2} \right)(\rho - n^{-1/3}) t } \nonumber \\ 
  \leq &\ \	\fn{\exp}{-\frac{1}{2}\left(\frac{\delta}{2}\right)^2(\rho -  n^{-1/3})t}.  \label{eq:rho-t}
\end{align}	
If $\delta=\log^{-1} n$ and $t \geq \log^5n$ then (\ref{eq:rho-t}) is upper bounded by $\fn{\exp}{-\frac{\log^2n}{200}} < \frac{1}{2n^2}$.
Meanwhile for  $\delta=1/2$ and $t \geq \frac{C\log\log n}{\rho} $,  the last term of (\ref{eq:rho-t}) can instead be upper bounded 
by $\log^{-\frac{C}{64}} n$. This proves the stated bounds on lower deviation.
	
The inequality
$$\fn{P}{\left( X_t \geq \left(1 +  \frac{1}{\log n}\right){\rho t} \right) \wedge (\mc{S}_{t-1} \in \mc{G})} \ \leq\  \frac{1}{2n^2}$$
for $t \geq \log^5 n$ is derived in the same manner.
\end{proof}

We also need the following two easy lemmas.

\begin{lemma}\label{lem:t2lower} 
We have $\fn{P}{T_2 \leq \sqrt{n}/ \log n} \leq 1/\log^2 n$.
\end{lemma}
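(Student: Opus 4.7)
The plan is to bound the probability that the second phase terminates within $T := \sqrt{n}/\log n$ proposals by controlling, step by step, the probability of the only termination mode available that early. The alternative termination of the second phase (the currently active boy is rejected by every girl) requires that boy to have made at least $n$ distinct proposals, and hence at least $n$ proposals in total, so this mode cannot trigger when $T_2 \leq T < n$. Therefore $\set{T_2 \leq T} \subseteq \bigcup_{t=1}^{T} A_t$, where $A_t$ is the event that the proposal at step $t$ of the second phase is accepted by some girl $g_j$ with $j \in R_{t-1}$, thereby closing a cycle.

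Next I would estimate $\fn{P}{A_t \mid \mc{S}_{t-1}}$. Given $\mc{S}_{t-1}$, the proposal at step $t$ is made to a uniformly random girl, so the probability that it targets some $g_j$ with $j \in R_{t-1}$ is exactly $|R_{t-1}|/n = X_{t-1}/n$; conditional on this, the probability that $g_j$ ranks the proposer above her pessimal stable partner is at most $1$. Consequently
\begin{equation*}
\fn{P}{A_t \mid \mc{S}_{t-1}} \leq \frac{X_{t-1}}{n}.
\end{equation*}
Because $X$ is a counter initialised at $X_0 = 1$ and incrementing by at most one per step, we have the deterministic bound $X_{t-1} \leq t$, and therefore $\fn{\bb{E}}{X_{t-1}} \leq t$.

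A union bound then gives
\begin{equation*}
\fn{P}{T_2 \leq T} \leq \sum_{t=1}^{T} \fn{P}{A_t} \leq \frac{1}{n}\sum_{t=1}^{T} \fn{\bb{E}}{X_{t-1}} \leq \frac{T(T+1)}{2n} \leq \frac{1}{\log^2 n}
\end{equation*}
for $n$ sufficiently large, which is the claimed bound. There is no real obstacle here: the per-step estimate $X_{t-1}/n$ follows from uniformity of the proposal together with the trivial bound of $1$ on the acceptance probability, and $X_{t-1} \leq t$ is simply the counter property. In particular, none of the sharper tools developed in Lemmas~\ref{lem:acceptance_prob} and~\ref{lem:bounds}, nor conditioning on good first-phase states, is required for this lemma; the tightness of the $1/\log^2 n$ bound is essentially determined by the tradeoff between $T^2/n$ and the target threshold.
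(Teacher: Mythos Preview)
The proposal is correct and essentially identical to the paper's argument: both observe that cycle-closure is the only termination mode possible this early, bound the per-step closing probability by $|R_{t-1}|/n$ via the uniform proposal distribution, invoke the deterministic counter bound $X_{t-1}\le t$ (the paper writes $X_t\le t+1$), and finish with a union bound. The only cosmetic difference is that the paper replaces the per-step bound $t/n$ by the uniform bound $1/(\sqrt{n}\log n)$ before summing, whereas you sum $t/n$ directly to get $T(T+1)/(2n)$; both yield $\le 1/\log^2 n$.
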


\begin{proof}
Note that $X_t \leq t+1$, and so  for any $t \leq \sqrt{n}\log^{-1}n -1$,  the probability that the next proposal is directed 
to a girl with index in $R_t$ is at most $\frac{1}{\sqrt{n}\log n}$. Therefore, the probability that the second phase 
terminates after exactly $t$ proposals is at most $\frac{1}{\sqrt{n}\log n}$ for every such $t$. The lemma follows by 
applying the union bound. 
\end{proof}

Let $\{b_j\}_{j \in J}$ be the set of boys who have proposed to $g_i$  by the end of the first phase.

\begin{lemma}\label{lem:j} 
The probability that at least one of the first $\sqrt{n}\log^3 n$ proposals of the second phase is directed to a girl $g_j$ with $j \in J$ is at 
most $n^{-1/3}$.
\end{lemma}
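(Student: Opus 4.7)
The plan is to prove this via a direct union bound, once we observe that the hypothesis on $S_*$ forces $|J|$ to be small. First, I would note that the assumption $\fn{P}{\overline{G_*} \mid \mc{S}_* = S_*} \leq 1/n^3$ implies that $S_*$ itself lies in $\mc G$: since $\mc G$ is monotone, if $S_* \not\in \mc G$ then no subsequent state can be in $\mc G$, so we would have $\fn{P}{\overline{G_*} \mid \mc{S}_* = S_*} = 1$, contradicting the assumption. In particular, property~\ref{girls} applied to $S_*$ gives that $g_i$ has received at most $21 \log n$ proposals in the first phase, so $|J| \leq 21 \log n$, and consequently the ``forbidden'' set $\{g_j : j \in J\}$ also has cardinality at most $21 \log n$.

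Next I would invoke the dynamic description of the deferred-acceptance algorithm recalled in Section~\ref{sec:random-model}: each proposal in the second phase is directed to a uniformly random girl, landing on any fixed girl with probability exactly $1/n$, independently of the past. A union bound over the first $k = \sqrt{n}\log^3 n$ second-phase proposals therefore yields a hitting probability of at most
$$k \cdot \frac{|J|}{n} \ \leq \ \sqrt{n}\log^3 n \cdot \frac{21 \log n}{n} \ = \ \frac{21 \log^4 n}{\sqrt{n}} \ \leq \ n^{-1/3},$$
where the final inequality holds for all sufficiently large $n$, since $21 \log^4 n \leq n^{1/6}$.

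There is no serious obstacle in this argument; the lemma is a straightforward counting estimate whose only role is to ensure that, with high probability, the second phase avoids the girls $\{g_j : j \in J\}$, so that the conditioning induced by the first-phase proposals to $g_i$ does not affect the analysis of the early part of the second phase. The only subtlety worth flagging is that second-phase proposals must be interpreted as uniform over \emph{all} $n$ girls (with possible redundancy), exactly as in the dynamic generation of Section~\ref{sec:random-model}; the upper bound of $1/n$ on the probability of hitting any given girl then holds unconditionally, so the union bound requires no further care about good-state conditioning.
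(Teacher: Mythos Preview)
Your proof is correct and follows essentially the same approach as the paper: the paper simply notes that $S_*$ being good gives $|J|\le 21\log n$ and then says the lemma follows by a union bound analogous to Lemma~\ref{lem:t2lower}, omitting the details. Your write-up fills in exactly those omitted details, including the justification (via monotonicity of $\mc G$) for why $S_*$ is itself good, and the explicit computation $\sqrt{n}\log^3 n \cdot \tfrac{21\log n}{n} = \tfrac{21\log^4 n}{\sqrt{n}} \le n^{-1/3}$.
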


\begin{proof}
As $S_*$ is 
good, we have $|J| \leq 21\log n$. Thus, this lemma follows by applying the union bound analogously to Lemma~\ref{lem:t2lower}. We omit the details.
\end{proof}

\begin{proof}[Proof of Lemma~\ref{l:main2}]
Let's begin by proving the lower bound.
Let $\mc{L}_t$ denote the collection of states $S_t$ such that 
\begin{itemize}
	\item  $\log^5 n \leq  t \leq \sqrt{n}/\log n$
	\item $S_t \in \mc{G}$, in particular the algorithm has not yet terminated,
	\item $X_t \leq \left(1+\frac{1}{\log n}\right) \rho t,$
	\item every girl $g_j$ with $j \in J$ received no proposal in the second phase so far.
\end{itemize}	
It follows from Lemmas~\ref{lem:bounds},~\ref{lem:t2lower} and~\ref{lem:j} that $P(\mc{S}_t \not \in \mc{L}_t) \leq \log^{-1} n$, 
for any  $\log^5 n \leq  t \leq \sqrt{n}\log^{-1}n$.
As any state $S_t\in \mc{L}_t$ is good and satisfies  $t \geq \log^5 n$, the boy $b_i$ has already finished the run which started the second 
phase. Moreover, no other boy who has previously proposed to $g_i$ has lost his partner and had an opportunity to make a proposal.  Thus, if the 
next proposal is directed at $g_i$, which happens with probability $1/n$, it is non redundant.  Such a proposal is accepted with 
probability $\frac{1}{\rho_i+1}$. In such a case, the algorithm terminates and outputs $Z=1/X_t$. By Lemma~\ref{lem:acceptance_prob},
	considering only the contributions of outcomes when the process terminates immediately following a state in $\mc{L}_t$ we get 
	the following lower bound on the expected value of $Z$.  
\begin{align*}
	\fn{\bb{E}}{Z|\mc{S}_*=S_* \wedge (i_1=i)} 
	\geq &\ \left(1 - \frac{1}{\log^2 n} \right) \frac{1}{n (\rho_i+1)} 
	\sum_{t=\log^5 n}^{\frac{\sqrt{n}}{\log n}} \frac{1}{\left(1+\frac{1}{\log n} \right) \rho t} \\
	= &\ \left(1 - \frac{1}{\log n} \right)\frac{1}{n \rho(\rho_i+1)} \sum_{t=\log^5 n}^{\frac{\sqrt{n}}{\log n}} \frac{1}{t} \\
	\geq &\ \left(1 - \frac{2}{\log n} \right)\frac{1}{n \rho(\rho_i+1)} \left( \log\left(\frac{\sqrt{n}}{\log n}\right) - \log(\log^5 n) -O(1) \right) \\
	=&\ \frac{1}{n \rho(\rho_i+1)} \left( \frac{1}{2}\log{n} - O(\log\log n) \right) 
	\end{align*}

Next we prove the upper bound. Let $\mc{U}_t$ denote the collection of states $S_t$ such that 
\begin{itemize}
	\item  $400\rho^{-1}\log\log n \leq t \leq \sqrt{n}\log^3n$
	\item $S_t \in \mc{G}$ or the algorithm has terminated,
	\item $X_t \geq \frac{1}{2}\rho t$.
	\item $X_t \geq \left(1-\frac{1}{\log n}\right) \rho t,$ if $t \geq \log^5 n$,
\end{itemize}

It follows from Lemma~\ref{lem:bounds} that:
$$\begin{cases}
P(\mc{S}_t \not \in \mc{U}_t) \leq \log^{-5}n\text{ for }
400\rho^{-1}\log\log n \leq t \leq \log^5 n \\ 
P(\mc{S}_t \not \in \mc{U}_t) \leq \frac{1}{n} \text{for }
\log^5 n \leq t \leq \sqrt{n}\log^3n
\end{cases}$$
Noting that the process terminates and outputs $Z=1/X_t$ immediately 
following any given state $S_t$ with probability at most  $\frac{1}{n(\rho_i+1)}$, 
we obtain  the desired upper bound on the expected value of $Z$, as follows:  
 \begin{align*}
 \fn{\bb{E}}{Z|\mc{S}_*=S_* \wedge (i_1=i)} 
 \leq &\ \frac{1}{n(\rho_i+1)} \sum_{t \leq 400\rho^{-1}\log\log n} 1 
 	+ \frac{1}{n(\rho_i+1)}\sum_{t = 400 \rho^{-1}\log\log n}^{\log^5 n} \left(\frac{1}{\log^5 n} + \frac{2}{\rho t} \right) \\
   &\qquad + \frac{1}{n(\rho_i+1)}\sum_{t = \log^5 n}^{\sqrt{n}\log^3n}\left(\frac{1}{n} + \frac{1}{\left(1-\frac{1}{\log n} \right) \rho t} \right)
  + \fn{P}{T_2 \geq \sqrt{n}\log^3n} \\
  = &\ \frac{1}{n(\rho_i+1)}\sum_{t \leq \sqrt{n}\log^3n}\frac{1}{\rho t} 
  	+ O\left(\frac{\log\log n}{n\rho(\rho_i+1)}\right) \\ 
	= &\ \frac{1}{n \rho(\rho_i+1)} \left( \frac{1}{2}\log{n} + O(\log\log n) \right).
 \end{align*}
This complete the proof of Lemma~\ref{l:main2} and thus of Lemma~\ref{l:main1}. Our first main result, Theorem~\ref{main-thm}, immediately follows.
\end{proof}

\section{Random Winning Coalitions}\label{sec:random-coalitions}

In this section, we consider the case where the girls in the coalition are themselves randomly selected.
Our task now is to prove that almost every girl must be selected if we wish to obtain a winning coalition asymptotically almost surely.
To do this, it will suffice to prove that there is a maximal rotation of cardinality two with constant probability.

\subsection{Generating Maximal Rotations from the Rotation Graph}
Let $Z'$ be a random variable counting the number of maximal rotations of cardinality two.
Again, to analyze $Z'$ we use a two-phase algorithm.
The {\em first phase} is the same as before. We simply generate the boy-optimal stable matching
${\bf 1}=\{ (b_1,g_1), (b_2,g_2),\dots,(b_n, g_n)\}$.
But the {\em second phase} is slightly different. 
Previously we had to evaluate the expected number of maximal rotations and, to achieve that, it sufficed to end the second phase
once we had found one rotation. Now, because we are interested in maximal rotations of cardinality two we will extend the 
second phase and terminate only when and if we find rotation of cardinality two.

So now in the second phase we use the following algorithm  to generate the random variable $Z'$, initialized at $0$:
\begin{itemize}
	\item Choose $i_1$ from $\{1,2,\ldots,n\}$ uniformly at random.
	\item Initialize the set of indices of boys who have made proposals in the second phase with ${\scr{I}}=\set{i_1}$. 
	\item Set ${\tt tar}=\infty$. 
\end{itemize}	
For motivation, at any step, girl $g_{\tt tar}$ can be viewed as the target girl. If she accepts the next proposal then this will
complete a rotation of cardinality two. Observe that we intitialize ${\tt tar}=\infty$ as it is impossible to complete a rotation in the fist step.

To complete the description of the second-phase, assume we have ${\scr{I}}=\set{i_1, \ldots,i_k}$. 
If $k<\frac{n}{2}$ {\em and} less than $n\log n$ proposals in total have been made then
we generate the next arc of the rotation digraph starting at $i_k$, as follows:
\begin{itemize}
	\item Let boy $b_{i_k}$ make uniformly random proposals until the first time he proposes to a girl $g_{j}$ such 
	that $g_j$ ranks  $b_{i_k}$ higher than $b_j$.
\begin{itemize}	
    \item If $j={\tt tar}$ then increment $Z'$ by $1$. Recurse.
	\item If $j \in {\scr{I}}\setminus\{{\tt tar}\}$ then pick $i_{k+1}$ from $\{1,2,\ldots,n\}\setminus {\scr{I}}$ uniformly at 
	random. Set ${\scr{I}}=\set{i_1, \ldots,i_k, i_{k+1}}$ and ${\tt tar}=\infty$. Recurse.
	\item If $j \not \in {\scr{I}}$ then set $i_{k+1}=j$, ${\tt tar}=i_k$, ${\scr{I}}=\set{i_1, \ldots,i_k, i_{k+1}}$. Recurse.
\end{itemize}
	\item If, instead, boy $b_{i_k}$ gets rejected by all the girls then return $Z'=0$
\end{itemize}

\begin{lemma}\label{lem:model-2} The probability of the existence of a maximal rotation of size two
is lower bounded by $P(Z'\geq 1)$.
\end{lemma}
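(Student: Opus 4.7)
The plan is to show the containment of events $\{Z'\geq 1\} \subseteq \{\exists \text{ maximal rotation of size two}\}$, which immediately yields the stated inequality. Recall from Section~\ref{sec:rotation-graph} that maximal rotations correspond exactly to directed cycles in the rotation graph $H({\bf 1})$, so it suffices to argue that whenever the algorithm reports $Z' \geq 1$ there genuinely is a directed $2$-cycle in $H({\bf 1})$.

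First I would spell out the meaning of each accepted proposal in the second phase. At any point, when the current boy $b_{i_k}$ makes uniformly random proposals, any proposal to a girl who has previously rejected him (either in the first phase or in an earlier run of the second phase) is redundant and is rejected automatically, since her ranking of him is already fixed. By the standard deferred-randomness interpretation of the random matching model (as used in Section~\ref{sec:random-model} and in the proof of Theorem~\ref{main-thm}), the non-redundant proposals reveal the unexplored portion of $b_{i_k}$'s preference list in order. Hence the first non-redundant proposal that is accepted by some girl $g_j$ (so $g_j$ ranks $b_{i_k}$ above her boy-optimal partner $b_j$) identifies precisely the outgoing arc of $H({\bf 1})$ at $i_k$: the next girl on $b_{i_k}$'s preference list after $g_{i_k}$ who prefers $b_{i_k}$ to her boy-optimal partner.

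Next I would trace what the increment to $Z'$ forces the algorithm's history to look like. The increment happens in a recursive call in which ${\tt tar}\neq \infty$; inspecting the three subcases, the only way to have ${\tt tar}=i_{k-1}$ is that the preceding recursive call went through the third subcase, which in turn required that $b_{i_{k-1}}$'s first non-redundant accepted proposal was directed at $g_{i_k}$. By the previous paragraph this certifies the arc $i_{k-1}\to i_k$ in $H({\bf 1})$. The increment itself witnesses $b_{i_k}$'s first non-redundant accepted proposal being directed at $g_{{\tt tar}}=g_{i_{k-1}}$, which certifies the arc $i_k\to i_{k-1}$. Together these two arcs form a $2$-cycle, so $[(b_{i_{k-1}},g_{i_{k-1}}),(b_{i_k},g_{i_k})]$ is a maximal rotation of size two, completing the containment.

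The only subtle point — and the main obstacle to full rigor — is the deferred-randomness identification: one must verify that, conditional on the present state, the next non-redundant random proposal by $b_{i_k}$ is uniform on the girls he has not yet proposed to, so that any one of his remaining preference positions is equally likely to be selected. This is precisely the standard equivalence between the random matching model and the dynamic-proposal process recalled in Section~\ref{sec:random-model}; no new ingredient is required, and the argument above yields $\fn{P}{\exists\text{ maximal rotation of size }2} \geq \fn{P}{Z'\geq 1}$.
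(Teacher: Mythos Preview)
Your proposal is correct and follows the same approach as the paper's proof: both argue that an increment to $Z'$ certifies a pair of arcs $i_{k-1}\to i_k$ and $i_k\to i_{k-1}$ in $H({\bf 1})$, hence a maximal rotation of size two, yielding the event containment $\{Z'\geq 1\}\subseteq\{\exists\text{ maximal rotation of size two}\}$. Your write-up is simply more explicit than the paper's two-sentence justification, spelling out why the second-phase accepted proposal genuinely identifies the outgoing arc of the rotation graph via the deferred-randomness equivalence.
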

\begin{proof}
Observe that $Z'$ is only incremented when we find a pair $(i,j)$ such that 
the next girls to accept proposals from $b_i$ and $b_j$, respectively, are $g_j$ and $g_i$.
This implies that $Z'\geq 1$ can only arise when there is a maximal rotation of cardinality $2$.
\end{proof}
Therefore, our aim is to prove that $P(Z'\geq 1) = \Omega(1)$, where $Z'$ is the random variable 
generated by the algorithm.

\subsection{Bounding the Number of Proposals}
Our objective now is to show that the behaviour of this new two-phase algorithm
does not deviate too much from its expected behaviour. Specifically, we show it satisfies a series of properties
with sufficiently high probability.
As before, let $T_1$ and $T_2$ be the number of proposals made in the first and second phases, respectively, and let $T = T_1+T_2$. 
Properties~\ref{termination} to \ref{boys} are as defined in Section~\ref{sec:minimum-coalitions}.
But now we require several more properties. To describe these,
let $p_{\mc{S}_t}$ denote the probability of the next proposal being accepted when in state $\mc{S}_t$. 
We are interested in the following five properties that may apply to a state in the second phase:
\begin{enumerate}[I.]
		\setcounter{enumi}{7}
		\item $t \geq \frac{1}{2} n\log n$ \label{T1-upper}
		\item No more than $n^{\frac{9}{10}}$ girls have received less than $\frac{1}{4}\log n$ proposals. \label{irregular}
		\item No more than $\sqrt{n}$ girls have received a redundant proposal.\label{redundant}
		\item $\set{p_{\tau}|T_1\leq \tau\leq t}\subseteq \clInt{\frac{1}{22\log n}}{\frac{5}{\log n}}$ \label{rho}
		\item $T_2 \geq \frac{1}{20} n\log n$ \label{T2-Random}
		%\item $23 n\log n\geq T_2 \geq \frac{1}{20} n\log n$ \label{T2-Random}
\end{enumerate}
Let $\mc{G}'$ be the set of all good states in the second phase satisfying these conditions.
Like $\mc{G}$, $\mc{G}'$ is monotone. 
Let $G^*$ denote the 
event $\mc{S}_{T-1} \in \mc{G}'$, that is the event that  the algorithm in a good state satisfying these conditions the period before it terminates.

\begin{lemma}\label{good-events-2}
For $n$ sufficiently large, $\fn{P}{G^*} \geq 1 - o(1)$.
\end{lemma}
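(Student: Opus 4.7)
The plan is to bound the failure probability of each of conditions~I--XII separately by $o(1)$ and conclude by a union bound. Conditions~I--VII should be handled by directly invoking Lemma~\ref{l:good}: its proof carries over with only superficial adjustment, since it relies only on (a)~each proposal being directed at a uniformly chosen girl, (b)~boys making at most $\log^4 n$ proposals in total, and (c)~the second phase being capped at $n\log n$ proposals, and all three persist for the random-coalition algorithm. The only condition needing reinterpretation is~\ref{T2-minimal}, which referred to termination upon finding a rotation; here the natural analogue is a bound on the length of the first phase, obtained by the same Chernoff argument.

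For the new conditions~\ref{T1-upper}, \ref{irregular}, and~\ref{redundant}, I would use the same Azuma--Chernoff toolkit of Lemmas~\ref{lem:Azuma} and~\ref{lem:Chernoff}. Condition~\ref{T1-upper} is a coupon-collector lower bound on $T_1$: with probability $1-o(1)$ some girl is still unproposed-to at time $\tfrac12 n\log n$. For~\ref{irregular}, fix a girl $g$; her proposal count during phase~1 is a counter with per-step increment probability $1/n$, so together with the matching upper bound $T_1\le(1+o(1))n\log n$, Chernoff gives $\Pr(\rho_g<\tfrac14\log n) = n^{-\Omega(1)}$, and Markov on the number of such girls then yields~\ref{irregular}. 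For~\ref{redundant}, condition~\ref{boys} implies that each proposal is redundant with probability at most $\log^4 n/n$ in a good state, so the expected number of redundant proposals over $O(n\log n)$ steps is $O(\log^5 n)\ll\sqrt{n}$, and Markov suffices.

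Condition~\ref{rho} is then a deterministic consequence of~\ref{girls}, \ref{irregular}, and~\ref{boys}, because the acceptance probability in state $\mc{S}_\tau$ equals $\tfrac1n\sum_g \tfrac1{\rho_{g,\tau}+1}$ restricted to girls the current boy has not yet proposed to: the upper bound $5/\log n$ follows since all but at most $n^{9/10}$ girls contribute at most $\tfrac{4}{\log n+4}$ and at most $\log^4 n$ further girls are excluded, while the lower bound $1/(22\log n)$ follows from $\rho_{g,\tau}+1\le 22\log n$ for every $g$.

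Finally, condition~\ref{T2-Random} is what I expect to be the main obstacle. The second phase can only terminate before step $\tfrac1{20}n\log n$ by reaching $|\mathscr{I}|=n/2$ or by a boy being rejected by every girl (finding a rotation of size two merely increments $Z'$ and recurses, so it is not a termination). The second mode is ruled out by~\ref{boys}. For the first, $|\mathscr{I}|$ is dominated by the number of accepted proposals, which by~\ref{rho} is a counter whose increment probability is at most $5/\log n$; Lemmas~\ref{lem:Azuma} and~\ref{lem:Chernoff} then give $|\mathscr{I}|<n/2$ throughout the first $\tfrac1{20}n\log n$ steps of phase~2 with probability $1-o(1)$. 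The subtlety is a mild chicken-and-egg between~\ref{rho} and~\ref{T2-Random}: the Chernoff argument for~\ref{T2-Random} needs the upper bound on $p_{\mc{S}_\tau}$ supplied by~\ref{rho}; this is resolved by noting that~\ref{rho} is a deterministic function of the monotone conditions~\ref{girls}, \ref{irregular}, \ref{boys}, so it holds along every good sub-trajectory, which is precisely the hypothesis required to apply Lemma~\ref{lem:Azuma} monotonically.
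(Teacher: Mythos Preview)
Your proposal is correct and mirrors the paper's argument almost step for step; the only differences are that the paper handles~\ref{T1-upper} via Chebyshev on the coupon-collector variance, and it explicitly invokes~\ref{redundant} alongside~\ref{irregular} when deriving the upper bound in~\ref{rho} (to pass from total proposal counts to \emph{distinct} proposal counts, which is what governs $\frac{1}{\rho_{g,\tau}+1}$). Note also that for~\ref{irregular} it is the \emph{lower} bound $T_1\geq\tfrac12 n\log n$ from~\ref{T1-upper} that drives the per-girl Chernoff estimate, not an upper bound on $T_1$.
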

\begin{proof}
Given the algorithm has not terminated, properties \ref{T1} to \ref{boys} hold with high probability by the
same argument as in Lemma~\ref{l:good}.
Therefore, it is enough to show that properties \ref{T1-upper} to \ref{T2-Random} hold almost surely conditioned 
on \ref{T1} to \ref{boys}.

Recall $T_1$ is the number of proposals until each girl receives at least one proposal. Thus $T_1$ is just the random variable for a coupon 
collector's problem. Let $t^i$ be the number of proposals needed to collect the $i^{th}$ coupon after the first $i-1$ coupons have already been collected.
So the $t^i$ are independent geometric random variables with parameters $\frac{n-(i-1)}{n}$ which sum to $T_1$. Thus the expectation of $T_1$ is
$\sum_{i=1}^n \bb{E}(t^i) = \sum_{i=1}^{n}\frac{n}{n-(i-1)} =  n\log n+\smallo{n\log n}$ 
with variance  
$\sum_{i=1}^n \mathrm{Var}(t^i) =\sum_{i=1}^n \left( \left(\frac{n}{(n-(i-1))}\right)^2-\frac{n}{(n-i-1)}\right)=n^2 \sum_{j=1}^n \frac{1}{j^2}-n\sum_{j=1}^n \frac{1}{j}$. 
Consequently, the variance is bounded above by $\frac{\pi^2}{6}\cdot n^2$. Applying Chebyshev's inequality then gives:
$$
\fn{P}{T_1\leq \frac{1}{2}n\log n}
\ \leq\ \fn{P}{|T_1-\bb{E}(T_1)
\ \geq\ \frac{1}{3}n\log n}
\ \leq\ \frac{\pi^2}{6\left(\frac{1}{3}\log n\right)^2}
\ =\ \frac{3\pi^2}{2 \log^2 n}
$$
This proves \ref{T1-upper} occurs almost surely as $t\geq T_1$ in the second phase.

Let $X_i$ be the indicator of whether girl $i$ has received at most $\frac{1}{4}\log n$ proposals 
after $t\geq \frac{1}{2}n\log n$ total proposals. Let $Y_i$ be the number of different proposals she has received in 
that amount of time. Then, by Markov's inequality:
$$
\fn{P}{\sum_{i=1}^n X_i\geq n^{\frac{9}{10}}}
\ \leq\  \frac{\bb{E}(\sum_{i=1}^n X_i)}{n^{\frac{9}{10}}}
\ =\  n^{\frac{1}{10}}\cdot P(X_1=1)
\ = \ n^{\frac{1}{10}}\cdot \fn{P}{Y_i\leq \frac{1}{4}\log n}
$$
Since $Y_i$ is binomial and has expectation greater than $\frac{1}{2}\log n$, applying a Chernoff bound gives:
$$\fn{P}{Y_i\leq \left(1-\frac{1}{2}\right)\frac{1}{2}\log n} \ \leq \ e^{-\frac{1}{8}\log n} \ =\ n^{-\frac{1}{8}}$$
Thus,
$$\fn{P}{\sum_{i=1}^n X_i\geq n^{\frac{9}{10}}}
\ \leq\ n^{\frac{1}{10}-\frac{1}{8}}
\ =\ n^{\frac{-1}{40}}
$$
This proves that \ref{irregular} occurs almost surely.

Next let $R$ denote the number of redundant proposals made in the first phase. 
We know that, throughout the first phase, the set of girls that any boy has proposed to has cardinality at most $\log^4 n$.
Thus, the probability that any proposal is redundant is bounded above by $\frac{\log^4 n}{n}$. 
This implies that the expected number of redundant proposals is at most:
$$\frac{\log^4 n}{n}\cdot 5 n\log n \ = \ 5\log^5 n $$
In particular, let $R_G$ be the number of girls who have received redundant proposals. Clearly $R_G\leq R$. Thus, by 
Markov's inequality:
$$ \fn{P}{R_G\geq \sqrt{n}}\ \leq \ \fn{P}{R\geq \sqrt{n}}\ \leq \ \frac{5\log^5 n}{\sqrt{n}}$$
This proves that \ref{redundant} occurs almost surely.

Now let $X$ be the set of girls who have received at least $\frac{1}{4}\log n$ distinct proposals. 
By \ref{T1-upper} and \ref{irregular}, no more than $n^{\frac{9}{10}}$ girls have received less than $\frac{1}{4}\log n$ 
proposals by the end of the first phase.
By \ref{redundant} no more than $\sqrt{n}$ of the remaining girls have received redundant proposals.
It follows that $|X|\geq n-\sqrt{n}-n^{\frac{9}{10}}$. 
Recall, $p_{\mc{S}_t}$ is the probability the next proposal being accepted and
let $\rho_i$ be the number of proposals received by $g_i$ at the end of the first phase. So, when \ref{T1-upper} to \ref{redundant} occur:

$$
p_{\mc{S}_t} \ \leq\ \frac{1}{n}\sum_{g\in G} \frac{1}{\rho_g+1}
\ \leq\ \frac{1}{n}\sum_{g\in X} \frac{1}{\rho_g+1}+\frac{\sqrt{n}+n^{\frac{9}{10}}}{n}
\ \leq\ \frac{n-n^{\frac{9}{10}}-\sqrt{n}}{n}\cdot \frac{1}{\frac{1}{4}\log n+1}+n^{-\frac{1}{2}}+n^{\frac{-1}{10}}
\ \leq\ \frac{5}{\log n}
$$
Similarly, by \ref{boys}, at least $n-\log^4 n$ girls have not received a proposal from the proposing boy. 
This, along with \ref{girls}, implies that when \ref{T1-upper} to \ref{redundant} occur $p_{\mc{S}_t}\geq \frac{n-\log^4 n}{n}\cdot \frac{1}{21 \log n+1}\geq \frac{1}{22\log n}$.
Hence, \ref{rho} occurs almost surely.

Finally, to show the last property, we modify the proof of Lemma~\ref{lem:bounds} using the bounds obtained from \ref{rho}. 
Indeed, if $t\leq \frac{n}{2}$ then the algorithm has not terminated. For $t\geq \frac{n}{2}$, denoting $X_t$ to be 
the number of accepted proposals in the second phase, applying Lemma~\ref{lem:Chernoff} with Lemma~\ref{lem:Azuma} 
for $t\leq \frac{1}{20}n\log n$ gives:
$$
\fn{P}{ X_{t} \geq \frac{n}{2}} 
\ \leq \ \fn{P}{ X_{t} \geq \frac{10}{\log n}t}
\ \leq \ \fn{P}{ B_{t,\frac{5}{\log n}} \geq  \frac{10}{\log n} t } 
\ \leq \ \fn{\exp}{-\frac{t\cdot \frac{5}{\log n}}{3}}
\ =\ o(1)  
$$	
Thus \ref{T2-Random} occurs almost surely.
\end{proof}

\subsection{Bounding the Probability of Missing a Rotation}
We can complete the proof of our second main result in two steps.
First, we show that the probability of a maximal rotation of cardinality two existing is at least a constant.
The second step is then easy. If there is a maximal rotation of cardinality two then a random coalition of cardinality at most
$(1-\epsilon)\cdot n$ will not be a winning coalition with constant probability. 

\begin{theorem}\label{thm:small-rotation}
Let $Z'_t=1$ when the $t^{th}$ proposal of the second phases closes a rotation of size $2$ and let $Z'=\sum_{t=1}^{T_2} Z'_t$. 
Then $\fn{P}{Z'\geq 1}=\Omega(1)$.
\end{theorem}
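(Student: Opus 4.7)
The plan is to reduce the statement to showing the algorithm spends many proposals in ``episodes'' (steps where ${\tt tar}\neq\infty$), and then to exploit the fact that each such proposal has a lower-bounded probability of closing a size-two rotation. Throughout I condition on the event $G^*$, which holds with probability $1-o(1)$ by Lemma~\ref{good-events-2}.

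Let $N_t\in\set{0,1}$ indicate that the $t$-th proposal of the second phase closes a rotation of size two, so that $Z'=\sum_t N_t$, and let $q_t$ be the conditional probability of $N_t=1$ given the history of the algorithm through step $t-1$. The dynamics of the second phase yield $q_t = \frac{1}{n(\rho_{{\tt tar}}+1)}$ when ${\tt tar}\neq\infty$ and $q_t=0$ otherwise, because the current boy targets $g_{{\tt tar}}$ with probability $1/n$ and her acceptance probability is $1/(\rho_{{\tt tar}}+1)$ by the dynamic preference model. Setting $Y_t = 2^{-\sum_{s\leq t}N_s}\cdot\fn{\exp}{\tfrac{1}{2}\sum_{s\leq t}q_s}$, the bound $\bb{E}[2^{-N_t}\mid\text{history}] = 1-q_t/2 \leq e^{-q_t/2}$ shows $(Y_t)$ is a nonnegative supermartingale, hence $\bb{E}[Y_{T_2}]\leq 1$. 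Evaluating $Y_{T_2}\geq e^{c/2}$ on the event $\set{Z'=0}\cap\set{\sum_t q_t\geq c}$ yields
\[
\fn{P}{Z'=0,\ {\textstyle\sum_t q_t\geq c}} \ \leq\ e^{-c/2}.
\]
It therefore suffices to find a constant $c>0$ with $\fn{P}{\sum_t q_t\geq c \mid G^*}=1-o(1)$.

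By property~\ref{girls}, $\rho_{{\tt tar}}\leq 21\log n$ on good states, so $q_t\geq \frac{1}{22n\log n}$ whenever ${\tt tar}\neq\infty$. Writing $W = |\set{t:{\tt tar}(t)\neq\infty}|$, we have $\sum_t q_t\geq W/(22n\log n)$, reducing the task to proving $W = \Omega(n\log n)$ a.a.s.\ on $G^*$. To this end I would track the growth of $\scr{I}$: each case-2 or case-3 acceptance grows $|\scr{I}|$ by one, up to a final value at most $n/2$. Conditional on the $|\scr{I}|$-transition from $k-1$ to $k$, the probability it is case-3 (which places ${\tt tar}$ at a non-$\infty$ value) is approximately $1-(k-1)/n$, the share of acceptance mass carried by girls outside $\scr{I}$. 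Once entered via case-3, ${\tt tar}$ remains non-$\infty$ for the entire $|\scr{I}|=k$ phase, since case-1 events preserve ${\tt tar}$; and by property~\ref{rho} each such phase has expected length $\Theta(\log n)$ proposals (a case-2/case-3 exit requires $\Theta(1)$ acceptances, each occurring every $\Theta(\log n)$ proposals). A Chernoff bound on the case-3 indicators for $k=1,\ldots,n/2$, combined with Lemma~\ref{lem:Azuma} applied to the per-phase durations, then yields $W = \Omega(n\log n)$ a.a.s.

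The principal obstacle is this final concentration step: the case-3/case-2 indicators and the per-phase durations are coupled through the $\rho_g$ values and the acceptance probabilities $p_{\mc{S}_t}$. The resolution is that on $G^*$ these quantities stay in the controlled ranges given by \ref{girls}--\ref{rho}, which permits coupling the true process with independent Bernoulli/geometric variables and applying Lemma~\ref{lem:Azuma} in two stages to deduce the required bound on $W$.
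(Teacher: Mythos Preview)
Your supermartingale reduction is sound, and reducing to $\sum_t q_t \geq c$ a.a.s.\ is a legitimate alternative to the paper's use of Lemma~\ref{lem:Azuma} on the counter $\sum_\tau Z'_\tau$. But two points need correction, and the central step is not yet complete.

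First, the formula $q_t = \frac{1}{n(\rho_{\tt tar}+1)}$ is not exact: you must allow for redundancy, and a non-redundant proposal to $g_{\tt tar}$ is accepted with probability $r/(\rho_{\tt tar}+1)$, where $r$ is the rank she has already assigned to $b_{\tt tar}$. Only the lower bound $q_t \geq \frac{1}{22n\log n}$ survives on good states (via $r\geq 1$ and properties~\ref{girls},~\ref{boys}), which is all you actually use. More seriously, the case-3 probability is not ``approximately $1-(k-1)/n$'': per-girl acceptance probabilities are far from uniform, and all that properties~\ref{girls} and~\ref{rho} deliver is a constant lower bound (the paper gets $1/220$ from the ratio $\frac{1/(44\log n)}{5/\log n}$). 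This still suffices, but it changes your arithmetic.

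The real gap is the concentration for $W=\Omega(n\log n)$. You need not merely that $\Omega(n)$ of the $n/2$ phases are case-3 and that phases last $\Theta(\log n)$ proposals in expectation, but that the case-3 phases \emph{together} last $\Omega(n\log n)$ proposals --- and the case-3 indicators and phase durations are coupled through the same randomness. Your two-stage plan can be made rigorous, but not quite as stated: after establishing $\Omega(n)$ case-3 phases via Lemma~\ref{lem:Azuma}, you would still need to argue that on $\mc{G}'$ each such phase is stochastically bounded below by an independent geometric with parameter $5/\log n$ regardless of which phases were case-3, and then run a second concentration argument on the sum. The paper sidesteps this entirely by folding the event $\{{\tt tar}\neq\infty\}$ into the per-step bound: it argues that for every $t\in[111\log^2 n,\tfrac{1}{20}n\log n]$ one has $\fn{P}{Z'_t=1\mid \mc{S}_t\in\mc{G}'}\geq \frac{1}{5000\, n\log n}$, where the factor $1/220$ is exactly the lower bound on $\fn{P}{{\tt tar}\neq\infty}$ coming from the previous acceptance, and then applies Lemma~\ref{lem:Azuma} once over $\Theta(n\log n)$ steps. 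That route trades your explicit bookkeeping of $W$ for a single per-proposal estimate and a single invocation of the counter lemma.
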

\begin{proof} 
The $t^{th}$ proposal of the second phase 
ends a rotation of size 2 if the following five events occur:
\begin{itemize}
    \item $E_1^t$: ``The second phase has not ended {\em and} we are not in the first run of the second phase.''
    \item $E_2^t$: ``The last proposal of the previous run was to a girl who hadn't yet accepted a proposal.''
    \item $E_3^t$: ``The proposal is made to the optimal partner of the boy who started the previous run.''
    \item $E_4^t$: ``The proposal is not redundant.''
    \item $E_5^t$: ``The proposal is accepted.''
\end{itemize}
If $111\log^2 n\leq t\leq \frac{1}{20} n \log n$, by \ref{boy-runs} and by \ref{T2-Random} we have $P(E_1^t|\mc{S}_t\in \mc{G}')=1$.

Denote $E_{(a,b)}^t=\bigwedge_{i=a}^b E_i^t$ and let $G_t$ be the set of girls who have accepted proposals in the second phase. 
By \ref{rho} the probability of a proposal being accepted is at most $\frac{5}{\log n}$. On the other hand, 
by \ref{girls} the probability of a proposal being accepted by a girl who isn't in $G_t$ is at 
least $\frac{1}{n}\cdot \frac{n}{2}\cdot \frac{1}{22\log n}=\frac{1}{44\log n}$, since at least $\frac{n}{2}$ girls are 
not in $G_t$. 
Thus, the probability of the proposal being accepted by a girl who isn't in $G_t$, given that it was accepted, is at least:
$$\frac{\frac{1}{44\log n}}{\frac{5}{\log n}} \ =\ \frac{1}{220}$$
Therefore, $P(E_2^t|E_1^t\wedge \mc{S}_t\in \mc{G}')\geq \frac{1}{220}$.
Now,  clearly $P(E_3^t|E_{(1,2)}^t\wedge \mc{S}_t\in \mc{G}')=\frac{1}{n}$. 
Moreover, by \ref{boys}, $P(E_4^t|E_{(1,3)}^t\wedge \mc{S}_t\in \mc{G}')\geq 1-\frac{\log^4 n}{n}$ 
and by \ref{girls}: \[P(E_5^t|E_{(1,4)}^t\wedge \mc{S}_t\in \mc{G}')\geq \frac{1}{22\log n}\]

Recall by~\ref{runs}, the first run contains at most $111\log^2 n$ proposals.
We may conclude that for large enough $n$ and $t$ such that $111\log^2 n \leq t\leq \frac{1}{20}n\log n$: 
$$
\fn{P}{Z'_t=1|\mc{S}_t\in \mc{G}'}
\ =\ \fn{P}{E_{(1,5)}^t|\mc{S}_t\in \mc{G}'}
\ \geq\  \frac{1}{220}\cdot \frac{1}{n}\cdot \left(1-\frac{\log^4 n}{n}\right)\cdot \frac{1}{22\log n}
\ \geq\ \frac{1}{5000 n\log n}
$$

Next denote the event to be $E'$. 
Since $\sum_{\tau=1}^{t} Z'_\tau$ is a counter, by Lemma~\ref{lem:Azuma}, we obtain the following bound:
\begin{align*}
    \fn{P}{Z'=0}
    & \leq  \ \sum_{S_{111\log^2(n)}} \fn{P}{E'}\cdot \fn{P}{\sum_{\tau=111\log^2 (n)}^{\frac{1}{20}n\log n} Z'_\tau=0\wedge \mc{S}_{\frac{1}{20}n\log n}\in {\cal{G}}'\middle|\mc{S}_{111\log^2 n}=S_{111\log^2 n}}\\
    & \leq \ \sum_{S_{111\log^2(n)}} \fn{P}{B_{\frac{1}{20}n\log n-111\log^2(n),\frac{1}{5000 n\log n}}=0}\\
    & = \ \fn{P}{\mc{S}_{\frac{1}{20}n\log n}\in {\cal{G}}'}
    \cdot \fn{P}{B_{\frac{1}{20}n\log n-111\log^2(n),\frac{1}{5000 n\log n}}=0}
\end{align*}

By Lemma~\ref{good-events-2}, $\fn{P}{\mc{S}_{\frac{1}{20}n\log n}\in {\cal{G}}'}=1-\smallo{1}$. So it is enough to show that:
$$\fn{P}{B_{\frac{1}{20}n\log n-111\log^2(n),\frac{1}{5000 n\log n}}=0} \ <\ 1-\Omega(1)$$
This follows as
\begin{align*}
  \fn{P}{B_{\frac{1}{20}n\log n-111\log^2(n),\frac{1}{5000 n\log n}}=0}
  &=\ \left(1-\frac{1}{5000 n\log n}\right)^{\frac{1}{20}n\log n-111\log^2(n)}\\
  &\leq\ \left(1-\frac{1}{5000 n\log n}\right)^{\frac{1}{21}n\log n}\\
  &\leq\ e^{-\frac{1}{105000}}\\
  &= \ 1-\Omega(1)\qedhere
\end{align*}
\end{proof}

We may now complete the proof of our second main result.
\begin{proof}[Proof of Theorem~\ref{main-thm-2}]
The probability that in a random instance there is a rotation of size $2$ is $\Omega(1)$ by Theorem~\ref{thm:small-rotation}. 
Given that there is a rotation of size $2$, the following is the probability that a random set of $\lambda n$ girls misses this rotation of size 2:
\begin{align*}
    \frac{\binom{n-2}{\lambda n}}{\binom{n}{\lambda n}}
    &=\ \frac{(n-2)!}{(n-2-\lambda n)!\cdot (\lambda n)!}\cdot\frac{(n-\lambda n)!\cdot (\lambda n)!}{n!}\\
    &=\ \frac{n-\lambda n}{n}\cdot \frac{n-1-\lambda n}{n-1}\\
    &\geq\  \left(1- \frac{\lambda n}{n-1}\right)^2.
\end{align*}

So, if $\lambda<1-\varepsilon$ for any positive constant $\varepsilon$, 
then the probability of missing a rotation is $\Omega(1)\cdot \left(1- \frac{\lambda n}{n-1}\right)^2=\Omega(1)$. 
%Indeed, by theorem~\ref{thm:small-rotation} and the above equations, for $n$ large enough the probability of missing a rotation of size 2 is at least $\left(\varepsilon -\frac{1}{n-1}\right)^2\cdot (Constant)\geq \varepsilon^2\cdot (Constant)/2$ for n large enough while for small $n$, the probability of missing a rotation for a given $n$ is a non-0 constant so the infimum must be non-0 since we are only comparing finitely many values.
\end{proof}

\section{Conclusion}
We have evaluated the expected cardinality of the minimum winning coalition.
We believe this result is of theoretical interest and that the techniques applied may have broader applications for
stable matching problems.
In terms of practical value it is worth discussing the assumptions inherent in the model.
The assumption of uniform and independent random preferences, while ubiquitous in the theoretical literature,
is somewhat unrealistic in real-world stable matching instances.
Furthermore, as presented, the model assumes full information, which is clearly not realistic in practice.
However, to implement the behavioural strategy presented in this paper, the assumption of full information 
is {\bf not} required. It needs only that a girl has a good approximation of the rank of her best stable partner.
But, by the results of Pittel~\cite{Pit89}, she does know this with high probability. Consequently, 
a near-optimal implementation of her behavioural strategy requires knowledge only of her own preference list!
This allows for a risk-free method to output a matching close in the lattice to the girl-optimal stable matching.
Similarly, as discussed, although our presentation has been in terms of a coalition of girls, each girl is able to
implement a near-optimal behavioural strategy independent of who the other girls in the coalition may be or what their 
preferences are.

%Rather than the optimal choice of girls for the coalition, an interesting open problem 
%is to determine how far in down the lattice one can descend using a randomly selected set of girls.
%
%By analyzing the rotations, we were able to characterize minimal winning coalitions in order to get from the 
%boy-optimal stable matching to the girl optimal stable matching. It is worth noting that the model we used 
%made assumptions that couldn't hold in the real market. Other than the preferences being uniformly random 
%and independent, we also assumed that the agents had full knowledge of the preference lists. 
%
%Further work into this subject could be include studying these minimal coalitions in non uniform random models, 
%or taking into account the lack of information. To take into account the lack of information, we could either have 
%agents accept good enough proposals even when they're not optimal or look at the expected number of girls we 
%need to pick to get a winning coalition instead of looking at the minimal one. 
%
%It's worth noting that permuting the preference list can't guarantee getting the girl optimal stable matching. 
%However, one could still study how much we can improve by permuting the preferences instead of having a shorter list.

%%%%%%%%%%%%%%%%%%%%%%%%%%%%%%%%%%%%%%%%%%%%%%%%%%%%%%%%%%%%%%%%%%%%%%%%

%%% The next two lines define, first, the bibliography style to be 
%%% applied, and, second, the bibliography file to be used.

\bibliographystyle{plain}
\bibliography{references}

\begin{thebibliography}{10}

\bibitem{AKL17}
I.~Ashlagi, Y.~Kanoria, and J.~Leshno.
\newblock Unbalanced random matching markets: the stark effect of competition.
\newblock {\em Journal of Political Economy}, 125(1):69--98, 2017.

\bibitem{Birk37}
G.~Birkhoff.
\newblock Rings of sets.
\newblock {\em Duke Mathematical Journal}, 3(3):443--454, 1937.

\bibitem{DF81}
L.~Dubins and D.~Freedman.
\newblock Machiavelli and the {G}ale-{S}hapley algorithm.
\newblock {\em American Mathematical Monthly}, 88(7):485--494, 1981.

\bibitem{FO90}
Philippe Flajolet and Andrew~M. Odlyzko.
\newblock Random mapping statistics.
\newblock In Jean-Jacques Quisquater and Joos Vandewalle, editors, {\em
  Advances in Cryptology --- EUROCRYPT '89}, pages 329--354, Berlin,
  Heidelberg, 1990. Springer Berlin Heidelberg.

\bibitem{GS62}
D.~Gale and L.~Shapley.
\newblock College admissions and the stability of marriage.
\newblock {\em American Mathematical Monthly}, 69(1):9--15, 1962.

\bibitem{Sot85}
D.~Gale and M.~Sotomayor.
\newblock Ms. {M}achiavelli and the stable matching problem.
\newblock {\em The American Mathematical Monthly}, 92(4):261--268, 1985.

\bibitem{Gon14}
Y.~Gonczarowski.
\newblock Manipulation of stable matchings using minimal blacklists.
\newblock In {\em Proceedings of the Fifteenth ACM Conference on Economics and
  Computation}, EC '14, page 449. Association for Computing Machinery, 2014.

\bibitem{Gus87}
D.~Gusfield.
\newblock Three fast algorithms for four problems in stable marriage.
\newblock {\em SIAM Journal on Computing}, 16(1):111--128, 1987.

\bibitem{GI89}
D.~Gusfield and R.~Irving.
\newblock {\em The Stable Marriage Problem: Structure and Algorithms}.
\newblock MIT Press, 1989.

\bibitem{IL86}
R.~Irving and P.~Leather.
\newblock The complexity of counting stable marriages.
\newblock {\em SIAM Journal on Computing}, 15(3):655--667, 1986.

\bibitem{IRL87}
R.~Irving, P.~Leather, and D.~Gusfield.
\newblock An efficient algorithm for the "optimal" stable marriage.
\newblock {\em J. ACM}, 34(3):532--543, 1987.

\bibitem{Knu82}
D.~Knuth.
\newblock Mariages stables et leurs relations avec d'autres probl\`emes
  combinatoires.
\newblock {\em Les Presses de l'Universit\'e de Montr\'eal}, 1982.

\bibitem{KMP90}
D.~Knuth, R.~Motwani, and B.~Pittel.
\newblock Stable husbands.
\newblock {\em Random Structures and Algorithms}, 1(1):1--14, 1990.

\bibitem{Kup18}
R.~Kupfer.
\newblock The instability of stable matchings: The influence of one strategic
  agent on the matching market.
\newblock {\em Proceedings of the 16th Conference on Web and Internet Economics
  (WINE)}, 2020.

\bibitem{MW71}
D.~McVitie and L.~Wilson.
\newblock The stable marriage problem.
\newblock {\em Communications of the ACM}, 14:486--490, 1971.

\bibitem{Pit89}
B.~Pittel.
\newblock The average number of stable matchings.
\newblock {\em SIAM Journal on Discrete Mathematics}, 2(4):530--549, 1989.

\bibitem{Pit92}
B.~Pittel.
\newblock On likely solutions of a stable matching problem.
\newblock {\em The Annals of Applied Probability}, 2(2):358--501, 1992.

\bibitem{Roth82}
A.~Roth.
\newblock The economics of matching: stability and incentives.
\newblock {\em Mathematics of Operations Research}, 7(4):617--628, 1982.

\bibitem{Stan97}
R.~Stanley.
\newblock {\em Enumerative Combinatorics, Volume I}.
\newblock Cambridge University Press, 1997.

\bibitem{Wil72}
L.~Wilson.
\newblock An analysis of the stable marriage assignment algorithm.
\newblock {\em BIT Numerical Mathematics}, 12:569--575, 1972.

\end{thebibliography}

%%%%%%%%%%%%%%%%%%%%%%%%%%%%%%%%%%%%%%%%%%%%%%%%%%%%%%%%%%%%%%%%%%%%%%%%

\end{document}